\definecolor{Blue}{rgb}{0.3,0.3,0.9}
\newtheorem{thm}{Theorem}[section]
\newtheorem{cor}[thm]{Corollary}
\newtheorem{prop}[thm]{Proposition}
\newtheorem{rmk}[thm]{Remark}
\newcommand{\commentout}[1]{}
\newcommand{\nwc}{\newcommand}
\newcommand{\bz}{{\mathbf z}}
\newcommand{\lt}{\left}
\newcommand{\rt}{\right}
\nwc{\bR}{\mb R}
\nwc{\bH}{{\mb H}}
\nwc{\bxp}{{{\mathbf x}}}
\nwc{\bap}{{{\mathbf y}}}
\nwc{\bPhi}{\mathbf{\Phi}}
\nwc{\bPsi}{\mathbf{\Psi}}
\nwc{\bh}{\mathbf h}
\nwc{\bI}{\mathbf I}
\nwc{\bP}{\mathbf P}
\nwc{\bs}{\mathbf s}
\nwc{\bd}{\mathbf{d}}
\nwc{\bX}{\mathbf X}
\nwc{\om}{\omega}
\nwc{\nwt}{\newtheorem}
\nwc{\xp}{{x^{\perp}}}
\nwc{\yp}{{y^{\perp}}}
\nwc{\ba}{{\mb a}}
\nwc{\bal}{\begin{align}}
\nwc{\ben}{\begin{equation*}}
\nwc{\bea}{\begin{eqnarray}}
\nwc{\beq}{\begin{eqnarray}}
\nwc{\bean}{\begin{eqnarray*}}
\nwc{\beqn}{\begin{eqnarray*}}
\nwc{\beqast}{\begin{eqnarray*}}
\nwc{\eal}{\end{align}}
\nwc{\een}{\end{equation*}}
\nwc{\eea}{\end{eqnarray}}
\nwc{\eeq}{\end{eqnarray}}
\nwc{\eean}{\end{eqnarray*}}
\nwc{\eeqn}{\end{eqnarray*}}
\nwc{\eeqast}{\end{eqnarray*}}
\nwc{\vep}{\varepsilon}
\nwc{\ep}{\epsilon}
\nwc{\ept}{\epsilon}
\nwc{\vrho}{\varrho}
\nwc{\orho}{\bar\varrho}
\nwc{\ou}{\bar u}
\nwc{\vpsi}{\varpsi}
\nwc{\lamb}{\lambda}
\nwc{\Var}{{\rm Var}}
\nwc{\nn}{\nonumber}
\nwc{\mf}{\mathbf}
\nwc{\mb}{\mathbf}
\nwc{\ml}{\mathcal}
\nwc{\IA}{\mathbb{A}} %algebraic
\nwc{\bi}{\mathbf i}
\nwc{\bo}{\mathbf o}
\nwc{\IB}{\mathbb{B}}
\nwc{\IC}{\mathbb{C}} %complex
\nwc{\ID}{\mathbb{D}} %Dedekind
\nwc{\IM}{\mathbb{M}} %Dedekind
\nwc{\IP}{\mathbb{P}} %Dedekind
\nwc{\II}{\mathbb{I}} %Dedekind
\nwc{\IE}{\mathbb{E}} %Euklides
\nwc{\IF}{\mathbb{F}} %finite field
\nwc{\IG}{\mathbb{G}} %Gauss
\nwc{\IN}{\mathbb{N}} %natural
\nwc{\IQ}{\mathbb{Q}} %rational
\nwc{\IR}{\mathbb{R}} %real
\nwc{\IT}{\mathbb{T}} %torus
\nwc{\IZ}{\mathbb{Z}} %integers
\nwc{\cE}{{\ml E}}
\nwc{\cP}{{\ml P}}
\nwc{\cQ}{{\ml Q}}
\nwc{\cL}{{\ml L}}
\nwc{\cX}{{\ml X}}
\nwc{\cW}{{\ml W}}
\nwc{\cZ}{{\ml Z}}
\nwc{\cR}{{\ml R}}
\nwc{\cV}{{\ml V}}
\nwc{\cT}{{\ml T}}
\nwc{\crV}{{\ml L}_{(\delta,\rho)}}
\nwc{\cC}{{\ml C}}
\nwc{\cO}{{\ml O}}
\nwc{\cA}{{\ml A}}
\nwc{\cK}{{\ml K}}
\nwc{\cB}{{\ml B}}
\nwc{\cD}{{\ml D}}
\nwc{\cF}{{\ml F}}
\nwc{\cS}{{\ml S}}
\nwc{\cM}{{\ml M}}
\nwc{\cG}{{\ml G}}
\nwc{\cH}{{\ml H}}
\nwc{\bk}{{\mb k}}
\nwc{\bn}{{\mb n}}
\nwc{\cbz}{\overline{\cB}_z}
\nwc{\supp}{{\hbox{supp}}}
\nwc{\fR}{\Re}
\nwc{\bY}{\mathbf Y}
\nwc{\pft}{\cF^{-1}_2}
\nwc{\bU}{{\mb U}}
\nwc{\bG}{{\mb G}}
\nwc{\bg}{\mathbf{g}}
\nwc{\mbf}{\mathbf{f}}
\nwc{\mbe}{\mathbf{e}}
\nwc{\be}{\mathbf{e}}
\nwc{\Om}{\Omega}
\nwc{\ind}{\operatorname{I}}
\nwc{\mbx}{\mathbf{f}}
\nwc{\bb}{\mathbf{g}}
\nwc{\xmax}{f_{\rm max}}
\nwc{\xmin}{f_{\rm min}}
\nwc{\suppx}{\hbox{\rm supp} (\mbf)}
\nwc{\by}{\mathbf{h}}
\nwc{\bZ}{\mathbf{Z}}
\nwc{\bF}{\mathbf{F}}
\nwc{\bE}{\mathbf{E}}
\nwc{\bV}{\mathbf{V}}
\nwc{\cI}{\IZ^2_N}
\nwc{\chis}{{\chi^{\rm s}}}
\nwc{\chii}{{\chi^{\rm i}}}
\nwc{\pdfi}{{f^{\rm i}}}
\nwc{\pdfs}{{f^{\rm s}}}
\nwc{\pdfii}{{f_1^{\rm i}}}
\nwc{\pdfsi}{{f_1^{\rm s}}}
\nwc{\thetatil}{{\tilde\theta}}
\nwc{\red}{\color{red}}
\nwc{\prox}{\hbox{prox}}
\nwc{\diag}{\hbox{\rm diag}}
\nwc{\sloc}{J_{\rm f}}
\nwc{\bu}{\xi}
\nwc{\bv}{\eta}
\nwc{\cU}{\mathcal{U}}
\nwc{\cN}{\mathcal{N}}
\nwc{\bN}{\mathbf{N}}
\nwc{\mbm}{\mathbf{m}}
\nwc{\bw}{\mathbf{w}}
\nwc{\im}{i}
\nwc{\bom}{\mathbf{w}}
\nwc{\bt}{\mathbf{t}}
\nwc{\z}{y}
\nwc{\cY}{\mathcal{Y}}
\nwc{\bM}{\mathbf{M}}
\nwc{\half}{{1\over 2}}
\begin{document}
\title{Fourier Phase Retrieval with a Single Mask by Douglas-Rachford Algorithm}
    % \thanks{Research supported in part by  US NSF grant DMS-1413373 and Simons Foundation grant 275037.}
%\title{Phase Retrieval with One or Two Coded Diffraction Patterns:  Convergence of the  Douglas-Rachford Algorithm}

\author{  Pengwen Chen
\address{ Applied Mathematics, National Chung Hsing University, Taichung 402, Taiwan.  } \and Albert Fannjiang 
 \address{
Department of Mathematics, University of California, Davis, California  95616, USA.
}
 }
\date{}%
  \maketitle
 \begin{abstract} 
 Douglas-Rachford (DR) algorithm is analyzed for Fourier 
 phase retrieval with a single random phase mask.
 Local, geometric convergence  to a unique fixed point  is proved with numerical demonstration of
 global convergence.

\bigskip

\noindent {\tiny {KEYWORDS.} Phase retrieval, diffract-before-destruct, coded diffraction pattern, Douglas-Rachford algorithm}
\end{abstract}

%\begin{AMS}49K35, 05C70,  90C08\end{AMS} 

%\pagestyle{myheadings} 
\thispagestyle{plain}

\section{Introduction}\label{intro}
X-ray crystallography has been the preferred technology for determining the  structure of a biological molecule over the past hundred years. The method, however, is limited by crystal quality, radiation damage and phase
determination \cite{MC}.
 The first two problems call for large crystals that yield sufficient diffraction intensities while reducing the dose to individual molecules in the crystal. 
The difficulty of growing  large, well-diffracting crystals is thus the  major bottleneck of X-ray crystallography - a necessary experimental step that can range from merely challenging to pretty much impossible, particularly for large macromolecular assemblies and membrane proteins.

By boosting the brightness of available X-rays by 10 orders of magnitude and producing pulses well below 100 fs duration, X-ray free electron lasers (XFEL)  offer the possibility of  extending  structural studies to {\em single, non-crystalline} particles or molecules  by using short intense pulses that out-run radiation damage, thus circumventing the first two aforementioned  problems \cite{Hajdu0}.  In the so-called {\em diffract-before-destruct} approach \cite{Chapman14, Chapman11, Hajdu},  %By the end of a 100 fs pulse, the average displacement of atoms is less than 1 angstrom.
a stream of particles is flowed across the XFEL beam and randomly hit by a single X-ray pulse, forming a single diffraction pattern before being vaporized as a nano-plasma burst.
Each diffraction pattern contains certain information about the planar  projection of the scattering potential of the object along  the direction of the beam which is to be recovered by phase retrieval techniques \cite{BW}. 

%Although modern macromolecular phasing methods are powerful, they generally rely either on pre-existing knowledge of the structure of a homologous molecule (molecular replacement) or on anomalous scattering experiments to obtain initial phase estimates. 

The modern approach to phase retrieval for non-periodic objects
roughly starts with the Gerchberg-Saxton algorithm \cite{GS72}, followed by its
variant, Error Reduction (ER), and the more powerful  Hybrid-Input-Output (HIO) algorithm\cite{Fie82,Fie13}. These form the cornerstones of the standard  {\em iterative transform
algorithms} (ITA) \cite{BCL02, Mar07}. 

However, the standard ITA tend to stagnate and 
do not perform well without 
additional  prior information, such as tight support and positivity.
The reason is that 
the {\em plain} diffraction pattern alone  does not guarantee uniqueness of
solution (see \cite{Vetterli}, however, for uniqueness under additional prior information).
 On the contrary, many phasing solutions exist for a given diffraction pattern, resulting in
what is called the {\em phase} problem \cite{Hau91}. 

\begin{figure}[h]
\centerline{\includegraphics[width=12cm]{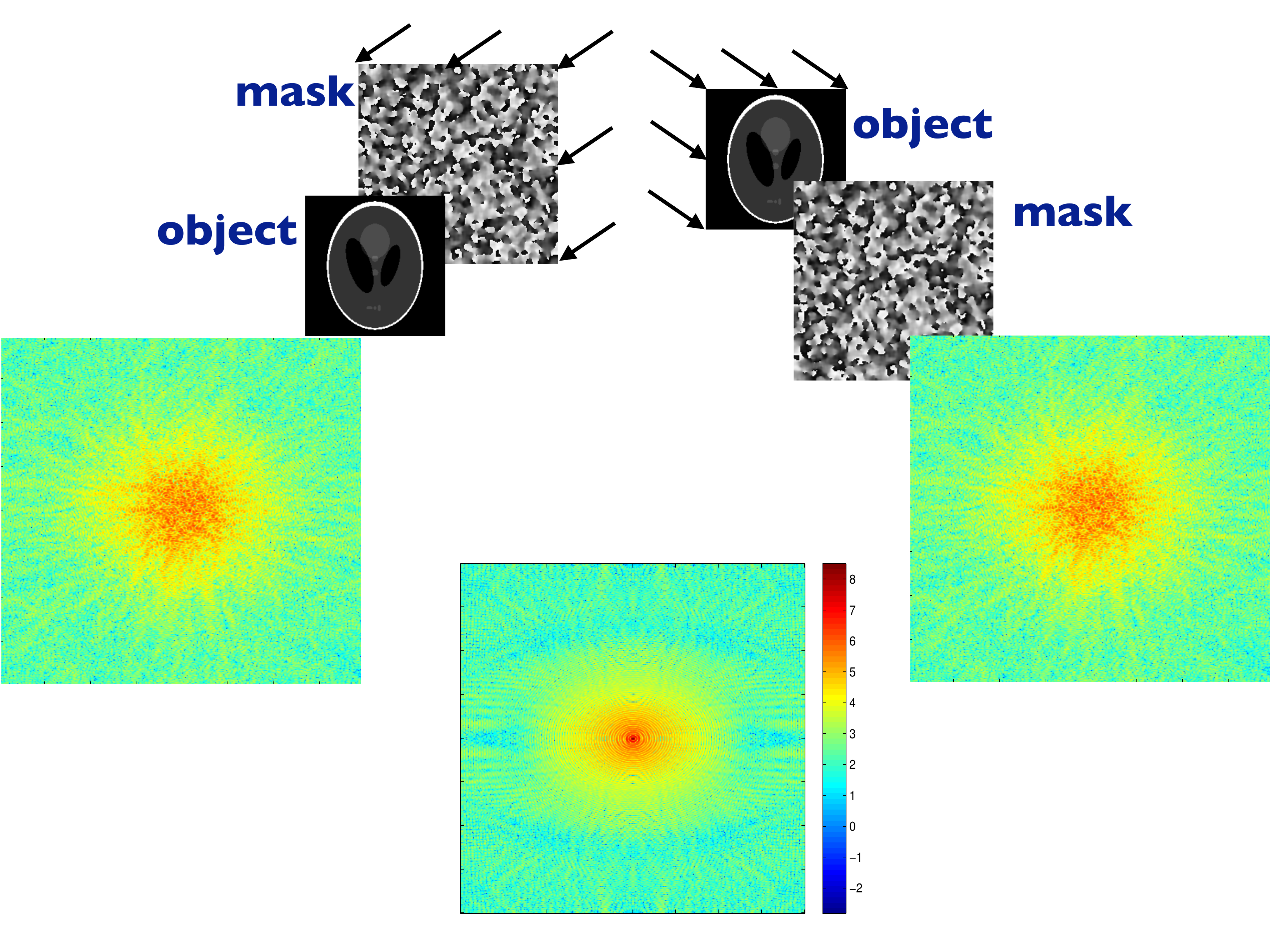}}
  \caption{Conceptual layout of coherent lensless imaging with a random mask (left) {\em before} (for random illumination) or (right) {\em behind} (for wavefront sensing) the object. 
 %The equivalence of the two imaging geometries provides additional flexibility in implementation. 
 (middle) The diffraction pattern measured without a mask has a larger dynamic range. The color bar is on a logarithmic scale.} 
 \label{fig:mask}
\end{figure}

To this end, a promising approach is to measure the diffraction pattern with
a {\em single} random mask and use the  {  coded diffraction pattern} as the data.
As shown in \cite{unique}, the uniqueness of solution  is restored with a high probability
given any  scattering potential  whose value is 
restricted to a known sector (say, the upper half plane) of the complex plane (see Proposition \ref{prop6.1}). 

Indeed, the sector constraint is a  practical, realistic condition to impose
on almost all materials as the imaginary part of the scattering potential is proportional to
the (positive) extinction  coefficient with the upper half plane
as the sector constraint \cite{BW}. For X-ray regime, the real part of the scattering potential is typically slightly negative %(with exception close to some  resonance frequencies) 
which with a nonnegative imaginary part gives to the second quadrant as the sector constraint
 \cite{X}.

What happens if the sector condition is not met and consequently one coded diffraction pattern
is not enough to ensure uniqueness? This question is particularly pertinent to 
the  diffract-before-destruct approach as the particle  can not withstand the radiation damage
from more than one XFEL pulses. 

A plausible measurement scheme  is to guide the transmitted field (the transmission function  \cite{BW}) from a {\em planar} illumination through a beam splitter \cite{splitter}, generating  two copies of
the transmitted field which  are then measured separately 
as a coded diffraction pattern and a plain diffraction pattern.  In this set-up,
the object function  is the transmitted field behind the particle and the phase retrieval problem becomes the wave-front reconstruction problem \cite{BW,Hardy}. In practice beam splitters and the masks (or any measurement devices)  should be used as sparingly as possible to avoid introducing
excessive  measurement noises.

  With two diffraction patterns, the uniqueness of solution (in the  above sense)
is restored almost surely  without the sector constraint (see Proposition \ref{prop6.1} and 
Remark \ref{rmk6.3}). 

With the uniqueness-ensuring  sampling schemes (Section \ref{sec:coded}), {\em ad hoc}  combinations of members of ITA (such as HIO and ER) can be devised  to recover the true solution \cite{rpi,pum}.  There is, however, no convergence proof for these algorithms.

The main goal of the paper is to prove the {\em local, geometric convergence}  
of the Douglas-Rachford (DR) algorithm  to a unique fixed point in the case of  one or two oversampled diffraction patterns (Theorems \ref{thm1}, \ref{cor5.2} and \ref{thm4}) and 
demonstrate {\em global convergence} numerically  (Section \ref{sec:num}).
 
DR  has the following general form: 
 Let $P_1$ and $P_2$ be  the projections  onto the two
constraint sets, respectively. For phase retrieval, $P_1$ describes  the projection onto
the set of diffracted fields (instead of diffraction patterns) and
$P_2$ the data fitting. 
 The Douglas-Rachford (DR) algorithm is defined by
the iteration scheme \cite{EB92,LM79}
\beq\label{dr}
y^{(k+1)}=y^{(k)}+P_1 (2P_2-I)y^{(k)}-P_2 y^{(k)},\quad k=1,2,3...
\eeq
Closely
related to HIO, DR also belongs to the ITA family  (Section \ref{sec:ITA}). ITA are computationally efficient
thanks to  the fast Fourier transform (FFT) and explicit nature of $P_1, P_2$ 
(see \eqref{proj} below). 

\subsection{Oversampled diffraction patterns}\label{sec:coded}
%For simplicity we focus  on the case with {\em discrete planar} (i.e. pixelated)  objects. 
Next we describe our sampling schemes before we can properly introduce 
$P_1, P_2$ and the Douglas-Rachford algorithm for phase retrieval (Section \ref{sec:ITA}).

 Let $f(\bn)$ be a discrete  object function with $\bn = (n_1,n_2,...,n_d) \in \IZ^d$. 
Consider  the {object space} consisting  of all functions  supported in 
$  \cM = \{ 0\le m_1\le M_1, 0\le m_2\le M_2,..., 0\leq m_d\leq M_d\}$. 
We assume $d\geq 2$. 

With a {coherent illumination} under the Fraunhofer 
approximation, the free-space propagation between the object plane and the sensor  plane 
can be described by the Fourier transform \cite{BW} (with the proper coordinates and normalization). However, only the {\em intensities} of the Fourier transform
are measured on the sensor plane and constitute  the so called {\em diffraction pattern} given by 
 \beq
   \sum_{\bn =-\bM}^{\bM}\sum_{\mbm+\bn\in \cM} f(\mbm+\bn)\overline{f(\mbm)}
   e^{-\im 2\pi \bn\cdot \bom},\quad \bom=(w_1,...,w_d)\in [0,1]^d,\quad \bM = (M_1,...,M_d)\nn
   \eeq
   which is the Fourier transform of the autocorrelation
   \beqn
	  R_{ f}(\bn)=\sum_{\mbm\in \cM} f(\mbm+\bn)\overline{f(\mbm)}.
	  \eeqn
Here and below the over-line notation means
complex conjugacy. 

Note that
$R_f$ is defined on the enlarged  grid
 \begin{equation*}
 \widetilde \cM = \{ (m_1,..., m_d)\in \IZ^d: -M_1 \le m_1 \le M_1,..., -M_d\le m_d\leq M_d \} 
 \end{equation*}
whose cardinality is roughly $2^d$ times that of $\cM$.
Hence by sampling  the diffraction pattern
 on the grid 
\beqn
\cL = \Big\{(w_1,...,w_d)\ | \ w_j = 0,\frac{1}{2 M_j + 1},\frac{2}{2M_j + 1},...,\frac{2M_j}{2M_j + 1}\Big\}
\eeqn
we can recover the autocorrelation function by the inverse Fourier transform. This is the {\em standard oversampling} with which  the diffraction pattern and the autocorrelation function become equivalent via the Fourier transform \cite{Miao00,MSC}.

%Let  $x_0\in \IC^n$ represent a unknown discrete, rank-2 object. Let 

A coded diffraction pattern is measured with a mask
whose effect is multiplicative and results in  
a {\em masked object}  of the form $
g(\bn) =f(\bn) \mu(\bn)$ 
where $\{\mu(\bn)\}$ is an array of random variables representing the mask.   
In other words, a coded diffraction pattern is just the plain diffraction pattern of
a masked object. 

We will focus on the effect of {\em random phases} $\phi(\bn)$ in the mask function 
$
\mu(\bn)=|\mu|(\bn)e^{\im \phi(\bn)}
$
where  $\phi(\bn)$ are independent, continuous real-valued random variables and $|\mu|(\bn)\neq 0,\forall \bn\in \cM$ (i.e. the mask is transparent). 
%We make the assumption of the uniform distribution only for uniqueness of solution in the 1-mask case in order to get a simple bound on the uniqueness probability (Proposition \ref{prop6.1}). 

Without loss of generality, we assume $|\mu(\bn)|=1,\forall\bn$ which gives rise to
a {\em phase} mask and an {\em isometric}  propagation matrix 
\beq
\label{one}
\hbox{\rm (1-mask )}\quad A^*= c\Phi\,\, \diag\{\mu\},
\eeq
i.e. $AA^*=I$ (with a proper choice of the normalizing constant  $c$), where $\Phi$ is the {\em oversampled}  $d$-dimensional discrete Fourier transform (DFT). Specifically  $\Phi \in \IC^{|\tilde \cM|, |\cM|}$ is the sub-column matrix of
the standard DFT on  the extended grid $\tilde \cM$ where $|\cM|$ is
the cardinality of $\cM$.  

When two phase masks $\mu_1, \mu_2$ are deployed, 
the propagation matrix $A^*$ is the stacked coded DFTs, i.e.   
\beq \label{two}\hbox{(2-mask case)}\quad 
A^*=c \lt[\begin{matrix}
\Phi\,\, \diag\{\mu_1\}\\
\Phi\,\, \diag\{\mu_2\}
\end{matrix}\rt]
\eeq
 With proper normalization, $A^*$ is
isometric. 

 % \subsection{Reduction of the number of coded patterns}
In line with the spirit of simplifying measurement complexity discussed above, 
we remove the second mask (i.e. $\mu_2\equiv 1$) and consider 
\commentout{
we shall make an observation to reduce the number of coded patterns by one, while keeping
the total number of diffraction patterns unchanged.

In terms of the more elaborate notation of Section \ref{sec:coded}, we introduce
the new unknown masked object function $g(\bn)=\mu^{-1}_2(\bn) f(\bn)$, a new phase mask $\mu=  \mu_1\mu_2^{-1}$ and
}
the  propagation matrix \cite{unique,rpi,pum}
\beq \label{two'}
\hbox{(1$\half$-mask case)}\quad A^*=c \lt[\begin{matrix}
\Phi\,\, \diag\{\mu\}\\
\Phi\,\, 
\end{matrix}\rt]
\eeq
normalized to be isometric, where $\mu$ is independently and continuously
distributed over the unit circle. In other words,  one oversampled coded pattern and one oversampled plain pattern are used for reconstruction. 

For convenience, we shall refer to this set-up as the $1\half$-mask case to distinguish it
from the one- and two-mask cases. This and its extension to the multi-mask cases would be
the set-up for all our numerical simulations in Section \ref{sec:num}.

%The multi-mask cases below all employ this sampling scheme.

\commentout{
\begin{figure}[h]
\centerline{\includegraphics[width=6cm]{sector.pdf}}
  \caption{Orthogonal projection $P$  onto the convex sector with $a+b\leq 1$ for $\bn\in \cM$. The shaded area is projected to 0 (the origin). } 
 \label{fig:proj}
\end{figure}
}

\subsection{Related literature}
For the optical spectrum,  experiments with coded diffraction patterns  are not new
and can be  implemented 
 by computer generated holograms \cite{BWW}, random phase plates \cite{AH1} and
liquid crystal phase-only panels \cite{FAK}.  Recently, a phase mask with
randomly distributed pinholes  has been
implemented  for soft X-ray \cite{ptycho-rpi}. 

Coded-aperture phase retrieval was formulated as a convex trace-norm
 minimization problem in \cite{phaselift0,phaselift1,Papa} whose uniqueness
was proved in \cite{CLS1} under the assumption that the number of independently
coded diffraction patterns is sufficiently large (polylogarithmic in
$|\cM|$). 

The convex formulation   \cite{phaselift0,phaselift1, CLS1} certainly has a tremendous appeal as global convergence can be expected for any proper numerical implementations. 
However, 
due to the lift to much higher dimensions, the convex  program may be computationally
expensive (see also \cite{BM1,Gross}). 

Alternative non-convex minimization formulations 
%, basedon minimizing $\| b^2-|A^* x|^2\|^2$, 
were proposed and solved by
various gradient methods \cite{CLS2,Mig11}. In practice, these 
algorithms are locally convergent with a comparatively large number ($\geq 6$) of coded diffraction patterns.  

An important difference between
the measurement schemes in 
these papers and the present work (as well as \cite{unique,rpi,pum}) is
that their coded diffraction patterns are {\em not} oversampled. Another distinctive  feature of the present setting is that the dimension $d\geq 2$
is required for the spectral gap (Theorem \ref{cor5.2}) and the uniqueness of fixed point (Theorem \ref{thm4}), but not for the structure theorem (Theorem \ref{thm1}). 

In this connection, we emphasize that
reducing the number of coded diffraction patterns
is crucial for the diffract-before-destruct approach
and, in our view,  oversampling is a small price to pay with  current sensor technologies. 

A bigger price may be  that we lose the robust injectivity property
prized in these works (also see  \cite{BY, BM2}). In other words,  just one or two random masks,
the phase retrieval map $F$ defined as $F(x)\equiv |A^* x|$ with $A^* $ given by \eqref{two'} or \eqref{two}  is injective
only after  a certain finite set is excluded from $\IC^{|\cM|}$. 
On the other hand, for {any}  given $f$,  with probability one 
the selection of the mask(s) is such that no other object, modulo a phase factor,  produces the same data as
$|A^* f|$ with  $A^*$ given by \eqref{two'} or \eqref{two}. Our numerical results show
that this notion of uniqueness appears suffices for most practical purposes when
DR is implemented for phase retrieval.

However, to the best of our knowledge, even local convergence is not known 
for any ITA for Fourier phase retrieval. The  present paper aims to fill
this gap. 
While we can not prove {\em global} convergence of DR as for the convex setting,  we will present strong numerical evidence for
global convergence. In \cite{ER}, we prove the local, geometric convergence of the error reduction algorithm
with the same measurement scheme described in  Section \ref{sec:coded}. 
Local convergence of ER in the case of Gaussian matrix $A^*$ was recently proved in \cite{NJS}. 

There is much more literature on phase retrieval with generic frames and  independent random matrices  \cite{Balan, BY, Balan1, Balan2, BM2,CC,  DH12,phaselift1, Eldar, Xin,Mallat} which have
a somewhat different flavor from that with  Fourier phase retrieval  as the latter
is neither of the former two. Consequently, some of our results (Theorems \ref{cor5.2} and
\ref{thm4}) can not extend to these other cases without major revision. The structure theorem (Theorem \ref{thm1})  remains valid, however (see Remark \ref{rmk:generic}).

There also is a growing body of work  on phase retrieval under sparsity assumptions, see \cite{Oh,LV,Hassibi, Hassibi2} and  the references therein. 

The rest of the paper is organized as follows. In Section \ref{sec:not}, we simplify
the notation for presenting the main results and the proof of local convergence.
In Section \ref{sec:ITA}, we describe the DR algorithm widely used
in convex optimization problems and formulate it for the non-convex problem of phase retrieval. In Section \ref{sec:proof}, we prove  local convergence of FDR under the spectral gap assumption (Theorem \ref{thm1}). In Section \ref{sec:gap}, we prove the spectral gap condition for any number of oversampled diffraction patterns (Theorem \ref{cor5.2}). In Section \ref{sec:fixed}, we prove
that the fixed point of DR is unique for one or two oversampled diffraction patterns (Theorem \ref{thm4}). In Section \ref{sec:num} we give 
numerical examples and demonstrate global convergence of
the DR scheme. 

\section{Set-up and notation} \label{sec:not}

For the main text below, we shall simplify  the notion as follows. The more elaborate notion of Section \ref{sec:coded}  will be useful again in the appendix. 

First, we convert the $d$-dimensional grid into an ordered set of index.  The unknown  object will now be  denoted by  $x_0\in \IC^n, n=|\cM|$. In other words, $x_0$ is the vectorized version of the object function $f$ supported in $\cM\subset \IZ^d, d\geq 2$ (Section \ref{sec:coded}). \\

{\bf Rank-2 property:}  $x_0$ is rank-2 (or higher)  if the convex hull of  $\supp\{f\}$ is two (or higher) dimensional. \\

Let $\cX$ be a nonempty closed convex  set in $\IC^n$ and 
\begin{equation}
[x]_\cX=\hbox{\rm arg}\min_{x'\in \cX} \|x'-x\|
\end{equation}
the projection onto $\cX$. \\

{\bf Sector constraint:}  $x_0$ satisfies the sector constraint
if the principal value of $\arg{x_0(j)},\forall j$ is
restricted  to a {\em sector}  $[-\alpha\pi,\beta\pi] \subsetneq (-\pi,\pi], \forall \bn$. As mentioned above 
 almost all scattering potentials $f$ 
have a nonnegative  imaginary part and hence satisfy the sector constraint with $\alpha=0,\beta=1$. The sector constraint serves as transition between the standard  positivity constraint ($\alpha=\beta=0$) and the null constraint ($\alpha=\beta=1$).

The sector projection is explicitly given as follows: 
%if $\alpha+\beta\leq 1$ (i.e. a convex set). 
For $j\leq n$  
 \beq
%\hbox{the $j$-th component of $[x]_\cX$} 
\label{5'}[x]_\cX(j)&=& \left \{ \begin{array}{ll} x(j) & \text{ if }  \measuredangle{x(j)} \in [-\alpha\pi,\beta\pi] \\
 \Re[x(j)e^{-i\beta\pi}] e^{i\beta\pi} & \text{ if } \measuredangle{x(j)} \in [\beta\pi, (\beta+1/2)\pi]\\
\Re[x(j)e^{i\alpha\pi}] e^{-i\alpha\pi}& \text{ if }\measuredangle{x(j)} \in [-(\alpha+1/2)\pi, -\alpha\pi]\\ 
0 & \text{ else}
\end{array} \right.
\eeq
and $[x]_\cX(j)=0, j>n+1$. \\

{\bf Phase retrieval problem}.  For a given unknown object $x_0$ of rank $\geq 2$, let $A^*=[a^*_j]\in \IC^{N\times n}$ be the propagation matrix given by \eqref{one}, \eqref{two} or
\eqref{two'} where $A^*$ is normalized to be isometric and $
b=|A^*x_0|\in \IR^N$ be the data vector. Phase retrieval is to find a solution $x$ to 
the equation 
\beq
\label{0}
 b=|A^* x|,\quad x\in \cX.
\eeq
We focus on two cases. \\

%\beq
%\lt\{\begin{matrix}
{{\bf 1) One-pattern case:} $A^*$ is given by \eqref{one}, $[x]_\cX$  is given by \eqref{5'}}. \\

{{\bf 2) Two-pattern case:} $A^*$ is given by \eqref{two} or \eqref{two'},  $\cX=\IC^n$}.  \\
%\end{matrix}\rt.
%\eeq

Phasing solution is unique only up to a constant of modulus one
 no matter how many coded diffraction patterns are measured.
 Thus the proper error metric for an estimate $\hat x$ of the true solution $x_0$  is given by
  \begin{equation}
 \min_{\theta\in \IR}\|e^{-i\theta} x_0-\hat x \|=\min_{\theta\in \IR}\|e^{i\theta} \hat x - x_0 \|. 
 \end{equation}

Throughout the paper, we assume 
the  canonical  embedding 
\[
\IC^n\subseteq \IC^{\tilde n}\subseteq \IC^N,\quad n\leq \tilde n\leq N.
\]
For example,  if $x\in \IC^n$, then the embedded vector in $\IC^{\tilde n}$ or $\IC^N$, still denoted by $x$,
has zero components $x(j)=0$ for $j\geq n+1$. This is  referred to as {\em zero padding} and
$\tilde n/n$ is the {\em padding ratio}.
Conversely, if $x\in \IC^{\tilde n}$ or $ \IC^N$, then $[x]_n \in \IC^n$ denotes 
 the projected vector onto $\IC^n$. Clearly, $[x]_{\IC^n}=[x]_n$. 
 
The vector space $\IC^N=\IR^N\oplus_\IR i\IR^N$ is %the complexification of $\IR^{N}$, a real vector space 
isomorphic to
$\IR^{2N}$ via the map 
\begin{equation}\label{51} G(v):=\left[
\begin{array}{c}
\Re(v)     \\
 \Im(v)  
\end{array}
\right],\quad \forall v \in \IC^{N}\end{equation} 
and endowed with the real inner product
\[
\langle u, v\rangle :=\Re(u^*v)=G(u)^\top G(v),\quad u,v\in \IC^N.
\]

With a slight abuse of notation, we will use $G(u)$
to denote the conversion of a complex-valued vector $u$ in $\IC^n, \IC^{\tilde n} $ or
$\IC^N$  to its real-valued version. \\

{\bf Phase factor:} Let $ y\odot y'$ and $y/y'$ be the component-wise multiplication and division between  two vectors $y,y'$, respectively. For any $y\in \IC^N$ define the phase vector $\om\in \IC^N$ with  $\om(j)=\z(j)/|\z(j)|$ where $|\z(j)|\neq 0$.
When $|\z(j)|=0$ the phase can be assigned arbitrarily and we set $\om(j)=1$  unless otherwise specified.

\section{Douglas-Rachford  algorithm}\label{sec:ITA}

%Let $z=A^*x\in \IC^N$.  
Phase retrieval   can be formulated   as the following feasibility problem 
in the Fourier domain 
 \beq
 \label{feas}
\hbox{Find}\quad  \hat y\in  A^*\cX \cap \cY,\quad \cY:=  \{y\in \IC^N: |y|=b\}. 
 \eeq
 
% \subsection{ Fourier-domain Douglas-Rachford algorithm}\label{sec:dr}
 
Let $P_1$ be the projection onto $A^*\cX$ and $P_2$ the projection onto $\cY$:
\beq
\label{proj}
P_1 y=A^*[Ay]_\cX,\quad P_2 y=b\odot{y\over |y|}
\eeq
Then  DR \eqref{dr} becomes $y^{(k+1)}=S_{\rm f}(y^{(k)})$ with
\begin{eqnarray}
S_{\rm f}(y)&=&\z+A^*\left[A \lt(2b\odot \frac{\z}{|\z|}-\z\rt)\right]_\cX-b\odot \frac{\z}{|\z|}  \label{fdr}
\end{eqnarray}
which we call the {\em Fourier-domain DR}  (FDR) to contrast with the following
object domain version.

  \commentout{
To this end, let 
 $x=[\tilde A z]_n _{X}$. 
 The isometric property  $\tilde A\tilde A^*=I_{\tilde n\times \tilde n}$ yields 
\begin{equation}\label{21}
x=\hbox{\rm arg}\min_{x'\in  X} \|x'-\tilde A z\|^2=\hbox{\rm arg}\min_{x'\in  X} \|\tilde A^* x'- z\|^2.
\end{equation}
Since $ \tilde A^* x' =A^* x',\forall x'\in X,$ we have by \eqref{21} and the isometric property  $AA^*=I_n$
\begin{equation}x=\hbox{\rm arg}\min_{x'\in X} \|A^*x'- \z\|^2=\hbox{\rm arg}\min_{x\in X} \|x'- A \z\|^2= [A\z]_n_\cX. 
\end{equation}
Hence $[\tilde A \z]_n _{X}=[A\z]_n_\cX$. 
}

Let  $\tilde A^*=[A^*, A_\perp^*]\in \IC^{N, \tilde n}$ be a complex isometric  extension of $A^*$,
implying  that $A_\perp A^*_\perp=I, AA^*_\perp=A_\perp A^*=0$.  % with $A_1^*=A^*\in \IC^{N,n}$ and   $A_\perp^*\in \IC^{N,\tilde n-n}$. 
Then the phase retrieval problem can be more generally  formulated  as $|\tilde A^* x|=b, x\in \cX$. Consider the feasibility problem
 \beq
 \label{feas2}
 \hbox{Find}\quad \hat x\in \cX\cap \tilde  \cX,\quad \tilde \cX := \Big\{x\in \IC^{\tilde n}: |\tilde A^*  x|=b\Big\}.
 \eeq
Let  $P_1$ be the projection onto $\cX$, i.e. $P_1 x=[x]_\cX$, and $P_2$ the projection onto $\tilde \cX$.
When $\tilde n=N$ (hence $\tilde A$ is unitary), 
\beq
\label{10} P_2 x=\tilde A\lt(b\odot {\tilde A^* x\over |\tilde A^* x|}\rt)
\eeq
and \eqref{fdr} is equivalent to  
 \begin{equation}\label{odr}
 S(x)=x+\left[\tilde A \left(2b\odot \frac{\tilde A^*x}{| \tilde A^*x|}\right)-x\right]_\cX-\tilde A\left(b\odot \frac{ \tilde A^*x}{|\tilde A^*x|}\right).  
 \end{equation} 
In this case, we have 
\beq
\label{2.2}
\tilde A^*S\tilde A=S_{\rm f},\quad\hbox{for}\quad  \tilde n=N.
\eeq

In the 1-pattern case with the standard oversampling  $N=\tilde n\approx 4n$, $\tilde A=A$ is unitary and \eqref{odr} is also known as the Hybrid-Input-Output (HIO)  algorithm (with the HIO parameter set to one) \,\cite{Fie82, BCL02}. 

For $\tilde n<N$ (as with two oversampled patterns $N\approx 8n$ with the standard padding
$\tilde n\approx 4n$), the precise form of $P_2$ is not known explicitly. For the purpose of contrasting  with \eqref{fdr} and for lack of a better term we shall call
\eqref{odr}  (with $\tilde n\leq N$) the generalized Object-domain Douglas-Rachford  algorithm (ODR for short).  The ODR family is an interpolation between the
HIO and FDR. 

While ODR depends explicitly on $\tilde n$, FDR is independent of $\tilde n$ in the sense that
 \begin{equation} \label{19} S_{\rm f}(y) =\z+\tilde A^*\left[\tilde A \lt(2b\odot \frac{\z}{|\z|}-\z\rt)\right]_{ \cX}-b\odot \frac{\z}{|\z|} \end{equation}
 since $ [\tilde A\z]_{ \cX}=[A\z]_\cX\in \IC^n$ and $  \tilde A^*[\tilde A \z]_{\cX}=A^* [A \z]_\cX.$
\commentout{%
In the 2-mask case, it is natural to separate the propagation matrix and the data  into
two components as $A=[A_1, A_2]$ and $b=[b_1^\top, b_2^\top]^\top$  corresponding to the two different masks. Note that $AA^*=A_1A_1^*+A_2A_2^*=I$ by assumption
and hence $A_1A_1^*=A_2A_2^*=I/{2}$. 
Again, the phase retrieval problem is equivalent to the feasibility problem
\beq
\hbox{Find}  \quad \hat y\in \cX_1\cap \cX_2,\quad
\cX_j=\big\{x\in \IC^{n}: |A_j^*x |=b_j\big\},\quad j=1,2.
\eeq
Let $P_j$ be the projection onto $\cX_j, j=1,2,$.
}
  
\section{Local convergence} \label{sec:proof}

For simplicity,  we shall analyze FDR \eqref{fdr} {\em without} the sector condition: 
\begin{equation} \label{DRnoSector}\; S_{\rm f}(\z):=\z+ A^*A \lt(2b\odot \frac{\z}{|\z|}-\z\rt)-b\odot \frac{\z}{|\z|}
\end{equation}
whereas ODR \eqref{odr} becomes
  \begin{equation}\label{odr2}
 S(x)=x+\left[\tilde A \left(2b\odot \frac{\tilde A^*x}{| \tilde A^*x|}\right)-x\right]_n-\tilde A\left(b\odot \frac{ \tilde A^*x}{|\tilde A^*x|}\right).
 \end{equation} 
 However, we make no assumption about the number of diffraction patterns which can well
 be one.
 
% According to~\cite{Finkelstein:772927},  when $N< 3n-2$,  $|A^*x|=|A^*y|$ does not imply 
%$x=ye^{i\theta}$ for a $\theta\in [0,2\pi]$.
%Thus, we assume   $N\ge 3n-2$ in the following. The FDR algorithms perform quite well with $N\ge 4n$ from our empirical simulations. 

The main result is  local, geometric convergence of the FDR algorithm.

\begin{thm} \label{thm1} Let $x_0\in \IC^n$ and $A^*$ any isometric $N\times n$ matrix.   Suppose $N\geq 2n$ and 
\beq\label{lam2}
 \max_{u\in \IC^n\atop iu\perp x_0}  \|u\|^{-1} \|\Im(B^* u)\| <1,\quad B:=A\,\,\diag\lt\{{A^*x_0\over|A^* x_0|}\rt\}.
 \eeq
 
Let $y^{(k)}$ be an FDR sequence  and 
 $x^{(k)}:=Ay^{(k)}, k=1,2,3...$. If $x^{(1)}$ is sufficient close to $x_0$, then for some constant $ \gamma<1$ 
\begin{equation}\label{23}
\| \alpha^{(k)}x^{(k)}-x_0\|\le \gamma^{k-1} \| x^{(1)}-x_0\|, \quad\forall k, \end{equation}
where
 \begin{equation}\label{24'}
\alpha^{(k)}:=\hbox{\rm arg}\min_{\alpha} \{ \|\alpha x^{(k)}- x_0\|: |\alpha|=1, \alpha\in \IC\}.
\end{equation}\end{thm}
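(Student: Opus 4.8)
The plan is to linearize the fixed-point map $S_{\rm f}$ of \eqref{DRnoSector} around the fixed point associated with $x_0$ and show that the spectral radius of the linearization, restricted to the relevant subspace (orthogonal to the trivial phase direction $i x_0$), is bounded by some $\gamma<1$; local geometric convergence then follows by a standard contraction argument. First I would identify the fixed point: since $b=|A^*x_0|$, the vector $y_0 := A^*x_0$ satisfies $P_2 y_0 = y_0$ and $A y_0 = x_0 \in \cX$ (here $\cX=\IC^n$, so $[\,\cdot\,]_\cX$ is the identity on $\IC^n$), hence $S_{\rm f}(y_0)=y_0$. Because of the global phase ambiguity, the whole circle $\{e^{i\theta}y_0\}$ is fixed, so the derivative $DS_{\rm f}(y_0)$ necessarily has $1$ as an eigenvalue with eigenvector $i y_0$; the content of the theorem is that this is the \emph{only} obstruction and that the rest of the spectrum is strictly inside the unit disk, which is exactly what hypothesis \eqref{lam2} should encode.

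Next I would compute $DS_{\rm f}(y_0)$ explicitly. Writing $\z = y_0 + v$ for small $v\in\IC^N$ and using $P_2 y = b\odot (y/|y|)$, whose derivative at $y_0$ is the pointwise projection onto the phase direction of each component — i.e. $DP_2(y_0)v = \om_0\odot\Re(\overline{\om_0}\odot v)$ where $\om_0 = y_0/|y_0|$, equivalently (in the real picture $G$) the block-diagonal projection onto the real line spanned by each $\om_0(j)$ — and using that $P_1 = A^*A$ is linear, one gets
\[
DS_{\rm f}(y_0) = I + A^*A\bigl(2\,DP_2(y_0) - I\bigr) - DP_2(y_0).
\]
It is cleaner to conjugate by the phase rotation $\diag\{\om_0\}$ so that all the pointwise phase projections become the real-part projection $\Re$, turning $A$ into $B^* \overline{(\,\cdot\,)}$-type expressions; this is why the matrix $B = A\,\diag\{A^*x_0/|A^*x_0|\}$ appears in \eqref{lam2}. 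After this reduction the relevant operator, acting on $u = Av$-type variables in $\IC^n$, should take the schematic form $I - 2P + 2PBB^*P$ or similar (with $P$ the real-part projection and $BB^*$ acting through the isometry), and a direct computation of its quadratic form on vectors with $iu\perp x_0$ reduces its norm to $\max \|u\|^{-1}\|\Im(B^*u)\|$. The invariance of the subspace $\{iu\perp x_0\}$ under this operator, and the identification of the $i x_0$ direction as the neutral eigenvector, are the bookkeeping steps to get right.

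Once $\|DS_{\rm f}(y_0)\mid_{V}\|\le \gamma<1$ on the invariant complement $V$ of the neutral direction, the final step is the quantitative local argument: one shows $S_{\rm f}$ maps a small neighborhood of the fixed-point circle into itself and contracts the $V$-component by $\gamma+o(1)$, using that $P_2$ is $C^1$ away from components vanishing and $P_1$ is linear; then I would pass from the Fourier-domain iterate $y^{(k)}$ to $x^{(k)}=Ay^{(k)}$ and from the error $\|v\|$ to the phase-aligned error $\|\alpha^{(k)}x^{(k)}-x_0\|$ of \eqref{24'}, noting that $A$ is a contraction ($\|A\|=1$) and that aligning the global phase is precisely projecting out the neutral direction, so the two error notions are comparable up to higher-order terms. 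Iterating gives \eqref{23}. The main obstacle I expect is the second step: carefully computing the linearization through the nonsmooth-looking normalization $y\mapsto y/|y|$, handling the conjugation that produces $B$, and verifying that the operator norm on the constrained subspace collapses exactly to the quantity in \eqref{lam2} rather than something merely comparable to it — getting the constant to be literally $<1$ under hypothesis \eqref{lam2}, not $\le 1$, is where the geometry (and the assumption $N\ge 2n$, which gives enough "room" in $A_\perp$) has to be used with care.
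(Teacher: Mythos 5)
There is a genuine gap, and it is structural rather than computational. Your plan is to show that the Jacobian of $S_{\rm f}$ at the fixed point, restricted to the orthogonal complement $V$ of the single neutral direction $iy_0$, has norm $\gamma<1$, and then run a standard contraction argument in the $y$-domain. That restricted-norm claim is false, and \eqref{lam2} does not encode it. Writing $v=\Om_0^*\eta$, the linearization at the solution is $\sloc v=(I-B^*B)\Re(v)+iB^*B\,\Im(v)$ (Proposition \ref{Linear}, eq.\ \eqref{49}), and its fixed-point set contains the whole subspace $E_1=\hbox{\rm null}_\IR(\cB)$ of real dimension $N-2n+1\geq 1$ (Proposition \ref{Seig}, Corollary \ref{cor5.8}); with one oversampled pattern $N\approx 4n$ this is a subspace of dimension about $2n$, all of it orthogonal to the phase direction $i|\z_0|$. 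So $\sloc$ has eigenvalue $1$ with large multiplicity on $V$, the Fourier-domain iterates $y^{(k)}$ are not shown (and need not be expected) to contract toward the fixed-point circle, and no neighborhood-invariance-plus-contraction argument in $y$ can close. The quantity in \eqref{lam2} is $\lambda_2$, the second singular value of $\cB$, which controls something weaker and more targeted: the paper multiplies the linearized iteration by $B$ and uses the identity $B\sloc v=iB\Im(v)$ (a consequence of $BB^*=I$, eq.\ \eqref{70'}), together with the fact that optimal phase alignment forces $\Im(v^{(k)})\perp |\z_0|=\bv_1$ (eq.\ \eqref{74}), to get $\|B\sloc v^{(k)}\|=\|\cB\,\Im(v^{(k)})\|\leq\lambda_2\|\Im(v^{(k)})\|$. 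This is exactly why the theorem asserts geometric decay of the object-domain error $\alpha^{(k)}x^{(k)}-x_0=Bv^{(k)}$ rather than of $\|y^{(k)}-e^{i\theta}y_0\|$; the non-contracting directions in $E_1$ are annihilated by $B$ and never enter the error being estimated. Your proposal is missing this projection step, which is the heart of the proof.

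Two smaller points. First, your formula for the derivative of $P_2$ is the wrong projection: at $|y_0|=b$ one has $DP_2(y_0)\eta=i\,\Om_0\,\Im(\Om_0^*\eta)$, the tangential (phase) perturbation, not $\om_0\odot\Re(\overline{\om_0}\odot\eta)$, which is the radial one; with your formula the linearized operator comes out wrong. Second, the hypothesis $N\geq 2n$ is not about "room in $A_\perp$"; it is what allows the real $2n\times N$ matrix $\cB$ to be (co)isometric so that the singular-value pairing $\lambda_k^2+\lambda_{2n+1-k}^2=1$ and hence the rotation-scaling block structure of $\sloc$ hold, which is what makes $\lambda_2$ the relevant contraction factor after applying $B$.
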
\commentout{%too much
 \begin{rmk}
 In view of \ref{19}, the error bound \eqref{23} remains valid for any sequence
 generated by \eqref{19} with initial condition sufficiently close to $x_0$. 
Here $A$, rather than $\tilde A$, should be in \eqref{23}. 
 \end{rmk}
 }

\begin{rmk}
In view of \eqref{2.2} and Theorem \ref{thm1}, the following error bound holds for ODR with $\tilde n=N$
\begin{equation*}
\| \alpha^{(k)}[x^{(k)}]_n-x_0\|\le \gamma^{k-1}  \| \alpha^{(1)} [x^{(1)}]_n-x_0\|, \end{equation*}
where  $x^{(k)}=\tilde A \z^{(k)}$ and
\beq
\label{71'}
\alpha^{(k)}=\hbox{\rm arg}\min_{\alpha\in \IC, |\alpha|=1} \|\alpha x^{(k)} - x_0\|. 
\eeq
\end{rmk}

\begin{rmk}  \label{rmk4.3}
 Theorem \ref{thm1} is about the algebraic structure of FDR and  does not assume  oversampled  diffraction patterns. For example, one oversampled 
diffraction pattern ($N\approx 4n$) or two unoversampled diffraction patterns ($N=2n$) are  sufficient. 

However, as shown in Theorem \ref{cor5.2}, the proof of 
\eqref{lam2} requires one (and only one) oversampled coded  diffraction pattern. 

\end{rmk}

\begin{rmk}\label{rmk:generic} 
When the propagation matrix $A^*$ is not isometric, we apply QR-decomposition
to obtain $A^*=QR$, where $Q$ is isometric, and treat $Q$ as the new propagation
matrix and $Rx_0$ as the unknown. 

\end{rmk}

%\begin{rmk} There is an intuition, but no proof, that $N\geq 2n-1$ may be necessary for phase retrieval 
%\end{rmk}
To this end, we derive and analyze the  local approximation of FDR as follows.

\subsection{Local analysis}

%Here, we focus on the simplest  case, $b=b_0$. The  case with $b\neq b_0$ will be studied in the next subsection.
First note that
  \begin{equation}\langle A  (\alpha^{(k)} \z^{(k)}-\z_0),x_0\rangle = \langle  \alpha^{(k)} \z^{(k)}-\z_0 ,\z_0\rangle= \langle v^{(k)}, |\z_0|\rangle \end{equation}
  with
  \[
  v^{(k)}= \Om_0^*(\alpha^{(k)}\z^{(k)}-\z_0),\quad\Om_0=\hbox{diag}(\om_0),\quad \om_0={y_0\over |y_0|}.
  \]

This motivates the following analysis of the Jacobian operator $\sloc$.

 \begin{prop}\label{Linear} Let $\z\in\IC^N$, 
 $
  \om=\z/|\z|$ and 
  $\Om=\diag(\om)$.  
  
Then  \begin{equation} 
S_{\rm f}(\z+\ep \eta)-S_{\rm f}(\z)=\ep \Om \sloc v+o(\ep)\end{equation}  
where 
\begin{equation} \sloc v=  (I-B^* B) v + i (2 B^* B-I)\; \diag\lt[{b\over |\z|}\rt]\Im(v),
\label{Sloc1}
 \end{equation}
 with 
 \beq
 \label{42}
B=A \; \Om,\quad v=\Om^*\eta . 
\eeq
In particular,  if  $|\z|=b$, then  \eqref{Sloc1} becomes  
 \begin{equation} \sloc v=  (I-B^* B)  \Re(v) +  i B^* B \Im(v)\label{49}
 \end{equation}
  \end{prop}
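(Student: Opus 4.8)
The plan is to compute the first variation of the map
$S_{\rm f}(\z) = \z + A^*A\big(2b\odot\tfrac{\z}{|\z|}-\z\big) - b\odot\tfrac{\z}{|\z|}$
directly, the only nontrivial ingredient being the derivative of the nonlinear normalization $\z\mapsto \z/|\z|$. First I would record that for a single complex coordinate $\z(j)=|\z(j)|\om(j)$, perturbing $\z(j)\mapsto \z(j)+\ep\,\eta(j)$ gives, to first order in $\ep$,
\begin{equation*}
\frac{\z(j)+\ep\eta(j)}{|\z(j)+\ep\eta(j)|} = \om(j) + \frac{\ep}{|\z(j)|}\Big(\eta(j) - \om(j)\,\Re\!\big(\overline{\om(j)}\,\eta(j)\big)\Big) + o(\ep),
\end{equation*}
i.e. in vector form $\tfrac{\z+\ep\eta}{|\z+\ep\eta|} = \om + \ep\,\Om\,\diag[|\z|^{-1}]\,(i\,\Im(v)) + o(\ep)$ where $v=\Om^*\eta$, since $v - \Re(v) = i\,\Im(v)$ and left-multiplication by $\Om$ undoes the conjugation by $\om$ coordinatewise. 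This is the single place where the structure of the problem enters; everything else is linear.

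Next I would substitute this expansion into each of the three terms of $S_{\rm f}$. The linear term $\z$ contributes $\ep\eta = \ep\,\Om v$. In the term $A^*A(2b\odot\tfrac{\z}{|\z|}-\z)$, the contribution splits: from $2b\odot\tfrac{\z}{|\z|}$ we get $2\,A^*A\,b\odot\big(\ep\,\Om\,\diag[|\z|^{-1}]\,i\,\Im(v)\big) = 2\ep\,A^*A\,\Om\,\diag[b/|\z|]\,i\,\Im(v)$, using that $b$ and $|\z|^{-1}$ are real diagonal factors that commute; and from $-\z$ we get $-\ep\,A^*A\,\Om v$. The last term $-b\odot\tfrac{\z}{|\z|}$ contributes $-\ep\,\Om\,\diag[b/|\z|]\,i\,\Im(v)$. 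Now I would factor $\Om$ out on the left of every piece, which is legitimate because $A^*A\,\Om = \Om\,(\Om^* A^* A\,\Om) = \Om\,B^*B$ by the definition $B = A\Om$. Collecting terms gives
\begin{equation*}
S_{\rm f}(\z+\ep\eta)-S_{\rm f}(\z) = \ep\,\Om\Big[v - B^*B\,v + i\,(2B^*B - I)\,\diag[b/|\z|]\,\Im(v)\Big] + o(\ep),
\end{equation*}
which is exactly \eqref{Sloc1} with $\sloc v = (I-B^*B)v + i(2B^*B-I)\diag[b/|\z|]\,\Im(v)$.

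For the special case $|\z| = b$, the diagonal factor $\diag[b/|\z|]$ becomes the identity, so $\sloc v = (I-B^*B)v + i(2B^*B-I)\Im(v)$. Writing $v = \Re(v) + i\,\Im(v)$ and distributing $(I-B^*B)$ over this splitting, the terms $i\,(I-B^*B)\Im(v)$ and $i(2B^*B-I)\Im(v)$ combine: $i\big[(I-B^*B) + (2B^*B-I)\big]\Im(v) = i\,B^*B\,\Im(v)$, while the real part leaves $(I-B^*B)\Re(v)$ untouched. This yields \eqref{49}.

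The main obstacle — really the only one requiring care — is the coordinatewise differentiation of the normalization and the bookkeeping needed to pull the diagonal phase matrix $\Om$ out to the left uniformly across all three terms; once one observes that $A^*A\,\Om = \Om\,B^*B$ and that the real diagonals $\diag[b]$, $\diag[|\z|^{-1}]$ commute freely, the rest is a direct collection of like terms. One should also note implicitly that the expansion is valid only where $|\z(j)|\neq 0$ for all $j$, consistent with the standing convention on the phase vector; since we will apply this at $\z$ with $|\z|=b>0$ in the sequel, this is not a genuine restriction.
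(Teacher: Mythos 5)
Your proposal is correct and follows essentially the same route as the paper: a first-order Taylor expansion of the phase normalization $\z\mapsto \z/|\z|$ (giving the increment $\ep\,\Om\,\diag[|\z|^{-1}]\,i\,\Im(\Om^*\eta)+o(\ep)$, which is precisely the paper's expansion of $(\Om_\ep-\Om)b$), followed by substituting into $S_{\rm f}$, pulling $\Om$ out on the left via $A^*A\,\Om=\Om B^*B$, and collecting terms; the specialization $|\z|=b$ is handled identically. The only cosmetic difference is that you linearize the normalization coordinatewise while the paper uses the tangential identity $\om_\ep-\om=i\Om\,\Im[\Om^*(\om_\ep-\om)]+o(\ep)$, and you expand the three terms of $S_{\rm f}$ separately rather than first reorganizing it as $\z-A^*A\z+(2A^*A-I)\Om b$.
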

\begin{proof} Let 
\[
\om_\ep={\z+\ep \eta\over |\z+\ep\eta|},\quad \Om_\ep=\diag(\om_\ep). 
\]
Reorganizing \eqref{DRnoSector}, we have 
\begin{equation}
S_{\rm f}(\z)=\z-A^*A \z+(2A^* A-I)\Om b.%+(2A^* A-I)( (b-b_\infty)\odot \frac{\z^{(k)}}{|\z^{(k)}|}). 
\end{equation}
and hence
\begin{eqnarray} 
\nn S_{\rm f}(\z+\ep\eta)-S_{\rm f}(\z)&=&\ep (I-A^* A)\eta  +(2A^*A-I)(\Om_\ep-\Om)b\\
&=&\ep (I-\Om B^*B\Om^*)\eta +(2 \Om B^* B \Om^*-I) (\Om_\ep-\Om)  b\label{44}
\eeq
We next give a first order approximation
to $(\Om_\ep -\Om)b$ in terms of $\eta$.

\commentout{%unnecessary
Decomposing  $\ep\eta$  into the (nearly) radial and angular components
\[
\ep\eta=\Om_\ep (|\z+\ep\eta|-|\z|)+(\Om_\ep-\Om) |\z|
\]
and extracting the angular component by forming the real inner product with ${i\om}$,
we obtain
\beq
\label{45}
%\Om^* (\z-\z_\infty)=\Om^*((|\z|-|\z_\infty|) \odot {\z\over |\z|})
\Re\lt(-i \overline{\om}\odot {\eta\over |\z|} \rt)&=&
\Im\lt(\Om^*{\eta\over |\z|} \rt)=\ep^{-1} \Im\lt(\Om^*(\om_\ep -\om)\rt)
+o(1). 
\eeq
}
Using   the first order Taylor expansion we have
\beq
\nn \om_\ep-\om&=&i\Om\Im\Big[\Om^*(\om_\ep-\om)\Big]+o(\ep)= {i\ep}\Om\Im\lt[\Om^*{\eta \over |\z|}\rt]+o(\ep)\nn,
\eeq
 and hence 
\beq
(\Om_\ep-\Om)b&=&i\ep \Om\,\diag\lt[ {b\over |\z|}\rt] \Im(\Om^*\eta )+o(\ep). \label{43}
\eeq
Finally, substituting \eqref{43} into  \eqref{44} we obtain 
\beqn
S_{\rm f}(\z+\ep\eta)-S_{\rm f}(\z)&=&
\ep (I-\Om B^*B\Om^*)\eta +i\ep (2 \Om B^* B-\Om)\,\diag( {b}/{|\z|}) \Im(\Om^*\eta)+o(\ep).
\end{eqnarray*}
Multiplying $\Om^*$  on both sides and using the definition of  $v$ we
 complete the proof.
\end{proof}
Note that $\sloc$ is a {\em real}, but {\em not} {\em complex},  linear map since
$\sloc (cv)\neq c\sloc v,  c\in \IC$ in general. 
%Being a normal map, $\sloc$ has an orthonormal set of  $2N$ real-valued eigenvectors. 

 Define the real form of the matrix $B$: 
 \begin{equation} \label{Bv}
  \mathcal{B}:=  
\left[
\begin{matrix}
 \Re[B]\\
  \Im[B] 
\end{matrix}
\right]\in \IR^{2n,N}.
\end{equation}
Note that 
\[
\lt[\begin{matrix}
\Re[B^\top]& \Im[B^\top]\\
-\Im[B^\top]& \Re[B^\top]
\end{matrix}\rt]
\]
is real isometric because $B^*$ is complex isometric.

From \eqref{51} we  have
\beq\label{53}
G(B^*u)=\lt[\begin{matrix}
\cB^\top G(u)\\
           \cB^\top G(-i u)
           \end{matrix}\rt],\quad u\in \IC^n. 
           \eeq

For the rest of the paper, $B$ denotes the matrix \eqref{42} with $\Omega=\Omega_0$, i.e. 
\beq
B=A\Omega_0\label{B},\quad\Omega_0=\hbox{diag}[\omega_0],\quad \omega_0={\z_0\over |\z_0|}
\eeq
unless otherwise specified. 

\subsection{Eigen structure}

 Let $\lambda_1\ge \lambda_2\ge \ldots\ge \lambda_{2n}\ge \lambda_{2n+1}=...=\lambda_{N}=0$ be the singular values of $\mathcal{B}$ with the corresponding right singular vectors
$\{\bv_k\in \IR^{N}\}_{k=1}^{N} $ and left singular vectors $\{{\bu}_k\in \IR^{2n}\}_{k=1}^{2n}$.
By definition, for  $k=1,...,2n$, 
\beq
\label{53'}
B \bv_k&=&\lambda_k G^{-1} (\bu_k),\\
\Re[B^* G^{-1}( \bu_k)]&=& \lambda_k \bv_k. 
\eeq 
\begin{prop}\label{B*bound} 
We have $\bu_{1}=G(x_0)$,  $\bu_{2n}=G(-ix_0)$, $\lambda_1=1, \lambda_{2n}=0$ as well
as 
  $\bv_{1}= |\z_0|$.
\end{prop}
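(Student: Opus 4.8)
The plan is to read off each of the six claimed facts from the explicit formula \eqref{Sloc1} for the Jacobian $\sloc$ (in the normalized case \eqref{49}) together with the structure \eqref{53} relating $B^*$ to the real matrix $\cB$, and to use the isometry $AA^*=I$ which gives $BB^*=I$ since $B=A\Omega_0$ with $\Omega_0$ unitary.

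\emph{Step 1: identify $\bu_1$ and $\lambda_1$.} First I would observe that $B$ is a partial co-isometry: $BB^* = A\Omega_0\Omega_0^*A^* = AA^* = I_n$, so $B^*$ is a complex isometry $\IC^n\to\IC^N$ and hence all singular values of $B$ are either $0$ or $1$ in the complex sense; but $\cB$ is the $2n\times N$ \emph{real} form, whose singular values $\lambda_k$ need not all equal $1$ because $\cB$ forgets the imaginary-part information. The key computation is: for $u\in\IC^n$, $\|\cB^\top G(u)\|^2 + \|\cB^\top G(-iu)\|^2 = \|G(B^*u)\|^2 = \|B^*u\|^2 = \|u\|^2$ by \eqref{53} and the isometry. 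Setting $u = x_0$, I want to show $\cB^\top G(x_0) = |\z_0|$, which forces $\lambda_1\geq 1$ and, combined with $\lambda_1\leq \|\cB\|$ and the norm bound, pins down $\lambda_1 = 1$, $\bv_1 = |\z_0|/\||\z_0|\|$ (up to the normalization convention for singular vectors), and $\bu_1 = G(x_0)$. The computation $\cB^\top G(x_0) = \Re(B^*x_0) = \Re(\Omega_0^* A^* x_0) = \Re(\Omega_0^* \z_0) = \Re(|\z_0|\,\mathbf 1\text{-pattern})$; since $\z_0 = A^*x_0$ and $\omega_0 = \z_0/|\z_0|$, we get $\Omega_0^*\z_0 = |\z_0|$, which is real, so $\Re(B^*x_0) = |\z_0|$. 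That is exactly the relation $\Re[B^* G^{-1}(\bu_1)] = \lambda_1 \bv_1$ with $\bu_1 = G(x_0)$, $\lambda_1 = \||\z_0|\|$ if we do not pre-normalize, or $\lambda_1 = 1$ after normalizing $x_0$; I would match the paper's normalization convention (it writes $\bv_1 = |\z_0|$, so it is not unit-normalizing $\bv_1$, hence $\bu_1 = G(x_0)$ is also unnormalized and $\lambda_1 = 1$ is consistent with $\|x_0\| = \||\z_0|\|$ from the isometry). I would also check $B|\z_0| = A\Omega_0|\z_0| = A\z_0 = AA^*x_0 = x_0 = G^{-1}(\bu_1)$, confirming \eqref{53'} with $\lambda_1 = 1$.

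\emph{Step 2: identify $\bu_{2n}$ and $\lambda_{2n} = 0$.} Here the point is that $\sloc$ in the form \eqref{49} kills the direction $v = i x_0$ restricted appropriately — more precisely, I want a unit vector in the left singular space with singular value $0$, i.e.\ a $w\in\IR^{2n}$ with $\cB w = 0$, equivalently $B^* G^{-1}(w) $ purely imaginary... actually $\cB w = \Re[B^* G^{-1}(w)] = 0$ with also $\Im$-part structure. The natural candidate is $w = G(-ix_0)$: then $B^* G^{-1}(w) = B^*(-ix_0) = -i\Omega_0^*\z_0 = -i|\z_0|$, which is purely imaginary, so $\Re[B^*(-ix_0)] = 0 = \lambda_{2n}\bv_{2n}$, forcing $\lambda_{2n} = 0$. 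To see this is the \emph{smallest} singular value (rather than merely some zero one) I would invoke that $\cB$ has rank exactly $2n$... but wait, the claim says $\lambda_{2n} = 0$, so $\cB$ has rank $\leq 2n-1$; indeed the kernel-on-the-left contains $G(-ix_0)$, and one shows the remaining singular values are positive using the hypothesis \eqref{lam2} or the rank-$2$ property (this is where the dimension $d\geq 2$ enters elsewhere, but for \emph{this} proposition we only need the single relation exhibited). So $\lambda_{2n} = 0$ follows just from exhibiting the one explicit null vector $G(-ix_0)$, and $\bu_{2n} = G(-ix_0)$ is the corresponding (bottom) left singular vector by the ordering convention.

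\emph{The main obstacle} I anticipate is bookkeeping the normalization conventions consistently: the paper writes $\bv_1 = |\z_0|$ and $\bu_1 = G(x_0)$ without unit-normalizing, so I must verify $\|x_0\| = \||\z_0|\|$ (true by isometry: $\||\z_0|\| = \|\z_0\| = \|A^*x_0\| = \|x_0\|$) to make $\lambda_1 = 1$ internally consistent with $B^* G^{-1}(\bu_1) = \lambda_1\bv_1$ and $B\bv_1 = \lambda_1 G^{-1}(\bu_1)$ both holding; and similarly to confirm that $G(x_0)$ and $G(-ix_0)$ are genuinely singular vectors (i.e.\ eigenvectors of $\cB\cB^\top$), not just vectors satisfying one of the two defining relations — this requires checking $\cB\cB^\top G(x_0) = \lambda_1^2 G(x_0)$, which unwinds to $\Re[B^*\Re[B^* x_0]\text{-type}]$ identities using $BB^* = I$ and the reality of $|\z_0|$. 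None of this is deep; it is a matter of carefully pushing the definitions of $G$, $\cB$, $B$, and $\Omega_0$ through, and the substantive input is simply $BB^* = I$ plus the observation $\Omega_0^*\z_0 = |\z_0| \in \IR^N$.
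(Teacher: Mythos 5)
Your proposal is correct and takes essentially the same route as the paper: the entire content is the computation $B^*x_0=\Om_0^*A^*x_0=|\z_0|$, which via \eqref{53} yields $\cB^\top G(x_0)=|\z_0|$ and $\cB^\top G(-ix_0)=0$, and hence the stated singular values and singular vectors. Your extra checks (the identity giving $\|\cB\|\le 1$, the equality $\|x_0\|=\||\z_0|\|$ from isometry, and $B|\z_0|=x_0$) merely make explicit the bookkeeping that the paper's ``hence the results'' leaves implicit.
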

\begin{proof} Since
 \beq\nn
&& B^* x_0=\Om_0^* A^* x_0=|\z_0|
\eeq
we have by  \eqref{53}
\beq
& &\Re[B^* x_0]=\mathcal{B}^\top \xi_{1}=|\z_0|, \quad \Im[B^*x_0]=\mathcal{B}^\top \xi_{2n}=0  \label{56}\eeq
 and hence the results. 
\end{proof}

\begin{cor}\label{cor5.5}
 \beq %\| B^* \|_\bot &:=&
\label{63} \lambda_2= \max_{u\in \IC^n\atop iu\perp x_0}  \|u\|^{-1} \|\Im[B^* u]\|=\max_{u\in \IR^{2n}\atop  u\perp \xi_1} \|u\|^{-1} \|\cB^\top u\| %\max_{u\in \IC^n,u^*x_0=0}  \|u\|^{-1} \|\Im(B^* u)\| =\lambda_{2}
 \eeq
\end{cor}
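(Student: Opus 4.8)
## Proof Plan for Corollary \ref{cor5.5}

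The plan is to establish the two equalities in \eqref{63} separately, reducing the complex singular-value characterization to a real one and then identifying $\lambda_2$ as the relevant constrained maximum. Recall from Proposition \ref{B*bound} that $\lambda_1 = 1$ with right singular vector $\bv_1 = |\z_0|$ and left singular vector $\bu_1 = \xi_1 = G(x_0)$, and that $\bu_{2n} = G(-ix_0)$ with $\lambda_{2n} = 0$. The standard variational (Courant--Fischer) characterization of singular values gives
\[
\lambda_2 \;=\; \max_{u \in \IR^{2n},\; u \perp \bu_1} \|u\|^{-1}\,\|\cB^\top u\|,
\]
since $\cB^\top$ has the same nonzero singular values as $\cB$, and the maximum over the orthogonal complement of the top left singular vector $\bu_1 = \xi_1$ picks out the second largest. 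This already yields the right-hand equality in \eqref{63}.

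For the left-hand equality, I would use the identity \eqref{53}, namely $G(B^*u) = \bigl[\begin{smallmatrix} \cB^\top G(u) \\ \cB^\top G(-iu)\end{smallmatrix}\bigr]$. The imaginary part of $B^*u$ corresponds, under the isomorphism $G$, to the lower block, so $\|\Im[B^*u]\| = \|\cB^\top G(-iu)\|$. Moreover $\|u\| = \|G(u)\| = \|G(-iu)\|$ since multiplication by $-i$ is an isometry of $\IC^n \cong \IR^{2n}$. The constraint $iu \perp x_0$ in the real inner product means $\Re(\overline{iu}^\top x_0) = \langle iu, x_0\rangle = 0$; equivalently $\langle u, -ix_0\rangle \cdot(-1)$... more directly, $\langle iu, x_0 \rangle = \langle u, -i x_0\rangle^{*}$-type bookkeeping shows $iu \perp x_0 \iff G(-iu) \perp G(x_0) = \xi_1$. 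Hence substituting $w := G(-iu)$, the middle expression in \eqref{63} becomes $\max_{w \perp \xi_1} \|w\|^{-1}\|\cB^\top w\|$, which is exactly the right-hand expression, and the chain of equalities closes.

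The one point requiring genuine care — and the main obstacle — is verifying that the constraint $iu \perp x_0$ translates correctly and \emph{completely}: the complex orthogonality $\langle iu, x_0\rangle = 0$ is a single real linear condition on $u \in \IC^n \cong \IR^{2n}$, and I must confirm it matches the single real condition $w \perp \xi_1$ in $\IR^{2n}$ under $w = G(-iu)$, with no loss of generality from the fact that we range over $u$ rather than $w$ (the map $u \mapsto G(-iu)$ is a linear bijection $\IC^n \to \IR^{2n}$, so this is fine). I would also double-check that restricting to $u$ with $iu \perp x_0$ is not more restrictive than restricting to $w \perp \xi_1$: since $G \circ (-i\cdot)$ is a bijection, the two index sets are in bijection and the suprema agree. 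Once this correspondence is nailed down, the rest is the routine Courant--Fischer argument plus the isometry observations, and the corollary follows.
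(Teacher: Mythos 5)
Your proposal is correct and follows essentially the same route as the paper: it uses \eqref{53} to rewrite $\|\Im[B^*u]\|$ as $\|\cB^\top G(-iu)\|$, translates the constraint $iu\perp x_0$ into $G(-iu)\perp \xi_1=G(x_0)$ via the norm-preserving bijection $u\mapsto G(-iu)$, and then applies the variational characterization of the second singular value with $\bu_1=\xi_1$ from Proposition \ref{B*bound}. The only difference is that you spell out the Courant--Fischer step and the bijection argument explicitly, which the paper leaves implicit.
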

\begin{proof}
By \eqref{53}, 
\[
\Im[B^* u]=\cB^\top G(-i u).
\]
The orthogonality condition $iu\perp x_0$ is equivalent to
\[
G(x_0)\perp G(-iu).%\quad \&\quad  G(-ix_0)\perp G(-iu).
\]
Hence, by 
 Proposition \ref{B*bound} $\bu_2 $ is the maximizer of the right hand side of \eqref{63}, yielding the desired value $\lambda_2$. 

\end{proof}

 \begin{prop}\label{Buu}   For $k=1,\ldots, n$, 
  \beq\label{57}
  \lambda_k^2+\lambda_{2n+1-k}^2=1
  \eeq
\beq
\label{58}
\bu_{2n+1-k}&=&G( -i G^{-1}(\bu_k) )\\
\bu_{k}&=&G(i G^{-1}(\bu_{2n+1-k}) ).\label{59}
\eeq
\end{prop}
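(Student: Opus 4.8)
The plan is to exploit the special block structure of $\cB$ coming from the fact that $B^*$ is a \emph{complex} isometry, together with the multiplication-by-$i$ symmetry that is already visible in Proposition \ref{B*bound} (where $\bu_1 = G(x_0)$ and $\bu_{2n} = G(-ix_0)$ are paired, with $\lambda_1^2 + \lambda_{2n}^2 = 1 + 0 = 1$). The key algebraic input is the identity, valid for any $w \in \IC^n$,
\begin{equation*}
\|\Re(B^*w)\|^2 + \|\Im(B^*w)\|^2 = \|B^*w\|^2 = \|w\|^2,
\end{equation*}
the last equality because $B^*$ is a complex isometry. Rewriting this via \eqref{53}, with $u = G(w)$ so that $G(-iw)$ is the ``rotated'' real vector, gives
\begin{equation*}
\|\cB^\top u\|^2 + \|\cB^\top (J u)\|^2 = \|u\|^2, \qquad J := G \circ (\text{mult.\ by } -i) \circ G^{-1},
\end{equation*}
where $J$ is the real orthogonal matrix on $\IR^{2n}$ representing multiplication by $-i$ (so $J^2 = -I$, $J^\top = -J$). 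In other words $\cB\cB^\top + J^\top (\cB\cB^\top) J = I_{2n}$, i.e. the symmetric operator $M := \cB\cB^\top = \cB\cB^\top$ satisfies $M + J^\top M J = I$.

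Next I would diagonalize. The left singular vectors $\bu_k$ of $\cB$ are exactly the eigenvectors of $M = \cB\cB^\top$ with eigenvalues $\lambda_k^2$; this is immediate from the SVD and the relation $\Re[B^* G^{-1}(\bu_k)] = \lambda_k \bv_k$ in \eqref{53'}. Applying the operator identity $M + J^\top M J = I$ to $\bu_k$ and then multiplying by $J$ (using $J^\top J = I$, $J^{-1} = -J$), one finds
\begin{equation*}
M (J\bu_k) = J(I - J^\top M J)\bu_k = J\bu_k - J J^\top M (J\bu_k)\cdot(\text{rearranged}) \implies M(J\bu_k) = (1-\lambda_k^2)\,J\bu_k,
\end{equation*}
so $J\bu_k$ is an eigenvector of $M$ with eigenvalue $1 - \lambda_k^2$. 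Since the spectrum of $M$ is $\{\lambda_1^2 \ge \dots \ge \lambda_{2n}^2\}$ and $1 - \lambda_k^2$ runs in the reverse order as $k$ runs $1,\dots,2n$, the map $\bu_k \mapsto J\bu_k$ must send the $k$-th eigenvector (up to sign/ordering within equal eigenspaces) to the $(2n+1-k)$-th one; in particular $\lambda_k^2 + \lambda_{2n+1-k}^2 = 1$, which is \eqref{57}. One then \emph{chooses} the ordering of the singular vectors within degenerate blocks so that the pairing is exact, giving $\bu_{2n+1-k} = J\bu_k = G(-iG^{-1}(\bu_k))$, which is \eqref{58}; and \eqref{59} follows by applying $J$ once more and using $J^{-1} = -J$, i.e. $\bu_k = -J\bu_{2n+1-k} = G(iG^{-1}(\bu_{2n+1-k}))$. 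A consistency check against Proposition \ref{B*bound}: $k=1$ gives $\bu_{2n} = G(-iG^{-1}(\bu_1)) = G(-ix_0)$ and $\lambda_1^2 + \lambda_{2n}^2 = 1$, as stated there.

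The main obstacle I anticipate is the bookkeeping around degenerate singular values: the relation $M(J\bu_k) = (1-\lambda_k^2)J\bu_k$ only determines $J\bu_k$ up to an orthogonal transformation within the eigenspace for $1 - \lambda_k^2$, so \eqref{58}–\eqref{59} should be read as a statement that one \emph{can} select orthonormal singular-vector bases compatibly with $J$, rather than that every choice satisfies it. Since $J^2 = -I$ pairs the eigenspace for $\lambda_k^2$ with that for $1-\lambda_k^2$ as genuine complex lines (each real $2$-plane $\mathrm{span}\{\bu, J\bu\}$ is $J$-invariant), such a compatible choice always exists; making this explicit — and observing that it is consistent with the already-fixed choices $\bu_1 = G(x_0)$, $\bu_{2n} = G(-ix_0)$ from Proposition \ref{B*bound} — is the only point requiring care. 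Everything else is a short computation with the $2\times 2$ block form of $J$ and the isometry identity.
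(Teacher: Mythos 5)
Your proof is correct, and it reaches the conclusion by a genuinely different mechanism than the paper, although both start from the same key input: the isometry identity $\|u\|^2=\|\cB^\top u\|^2+\|\cB^\top (Ju)\|^2$ with $J:=G\circ(\text{mult.\ by }-i)\circ G^{-1}$ (this is exactly \eqref{Key} in the paper). The paper feeds this identity into the Courant--Fischer max/min characterizations \eqref{60'}--\eqref{61'} and runs an induction on $k$: at step $j$, maximizing $\|\cB^\top u\|$ over the orthogonal complement of $\bu_1,\dots,\bu_{j-1}$ is converted into minimizing over the orthogonal complement of $\bu_{2n},\dots,\bu_{2n+2-j}$ via $v=G(-iG^{-1}(u))$, which simultaneously yields $\lambda_j^2=1-\lambda_{2n+1-j}^2$ and identifies $G(-iG^{-1}(\bu_j))$ as a minimizer, i.e.\ as $\bu_{2n+1-j}$. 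You instead package the identity as the operator equation $M+J^\top MJ=I$ for $M=\cB\cB^\top$ and use that $J$ is orthogonal with $J^2=-I$, so conjugation by $J$ maps the eigenspace of $M$ for $\lambda^2$ isomorphically onto that for $1-\lambda^2$; symmetry of the spectrum under $\mu\mapsto 1-\mu$ then gives \eqref{57} at once, and a $J$-compatible choice of orthonormal eigenbases gives \eqref{58}--\eqref{59}. Your route is non-inductive and arguably more transparent, and it has the added merit of making explicit the caveat about degenerate singular values (the pairing of singular vectors is a statement about a permissible choice of bases, including the self-paired eigenvalue $1/2$ case, which your complex-structure argument handles since $J$-invariant eigenspaces are even-dimensional); the paper's induction leaves this implicit in its use of $\arg\max$/$\arg\min$. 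What the paper's variational argument buys in exchange is that it stays entirely within the min-max framework already set up for Corollary \ref{cor5.5} and never needs the eigen-decomposition of $\cB\cB^\top$ as a separate object. One cosmetic remark: your displayed rearrangement leading to $M(J\bu_k)=(1-\lambda_k^2)J\bu_k$ is garbled as written; the clean derivation is to conjugate $M+J^\top MJ=I$ by $J$ to get $JMJ^\top=I-M$, hence $M(J\bu_k)=J\bu_k-JMJ^\top J\bu_k=J\bu_k-JM\bu_k=(1-\lambda_k^2)J\bu_k$.
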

\begin{proof}
Since $B^*$ is an isometry, we have $\|w\|=\|B^* w\|,\forall w\in \IC^n$. On the other hand, 
we have
\commentout{
\beq \label{Key}
\|B^*u\|^2=\|\Re(B^* u)\|^2+\|\Im[B^* u]\|^2=\|\Re[B^* u]\|^2+\|\Re[i B^*u]\|^2
\eeq
and 
}
\beqn
\|B^*w\|^2=\|G(B^*w)\|^2=\|\mathcal{B}^\top G(w)\|^2+\|\mathcal{B}^\top G(-iw)\|^2
\eeqn
and hence
\beq
\|G(w)\|^2=\|\mathcal{B}^\top G(w)\|^2+\|\mathcal{B}^\top G(-iw)\|^2. \label{Key}
\eeq

Now we prove \eqref{57}, \eqref{58} and \eqref{59} by induction. 

Recall the variational characterization of the singular values/vectors
\beq \label{60'}
\lambda_j=\max\| \mathcal{B}^\top {u}\|,& \bu_j=\hbox{\rm arg}\max
 \| \mathcal{B}^\top  {u}\|, & \hbox{s.t.}\,\, {u}\perp {\bu}_1,\ldots,  {\bu}_{j-1},\quad \|u\|=1\\
 \lambda_{2n+1-j}=\min\| \mathcal{B}^\top {u}\|,& \bu_{2n+1-j}=\hbox{\rm arg}\min
 \| \mathcal{B}^\top  {u}\|, & \hbox{s.t.}\,\, {u}\perp {\bu}_{2n},\ldots,  {\bu}_{2n+2-j}, \|u\|=1.\label{61'}
\eeq
%Let $\bu_j=G(iw_j), j=1,...,n$. 
By Proposition \ref{B*bound}, \eqref{57}, \eqref{58} and \eqref{59} hold for $k=1$. 
Suppose  \eqref{57}, \eqref{58} and \eqref{59} hold for $k=1,...,j-1$ and 
we now show that they also hold for $k=j$. 

 Hence by \eqref{Key}
 \[
 \lambda^2_j=\max_{\|u\|=1}\| \mathcal{B}^\top {u}\|^2=1-\min_{\|v\|=1}\| \mathcal{B}^\top {v}\|^2,\quad  \hbox{s.t.}\,\, {u}\perp {\bu}_1,\ldots,  {\bu}_{j-1},\quad v=G(-iG^{-1}(u)).
 \]
 The condition $ {u}\perp {\bu}_1,\ldots,  {\bu}_{j-1}$ implies $v\perp {\bu}_{2n},\ldots,  {\bu}_{2n+2-j}$ and vice versa. By \eqref{61'}, we have  $\lambda_j^2=1-\lambda_{2n+1-j}^2$ and  $G(-iG^{-1}(\bu_j))$ is the minimizer, i.e. $\bu_{2n+1-j}=G(-iG^{-1}(\bu_j)).$ %. %

\end{proof}

The relation \eqref{57} suggests the following parametrization  
of singular values of $\cB$:
 \[ \lambda_{2n+1-k}:=\cos \theta_k, 
\quad \lambda_k:=\sin \theta_k,\quad \theta_k\in [0,2\pi].\]

\begin{prop}\label{Srate}
For each $k=1,\ldots, n$, 
\begin{eqnarray}\label{B5}
&&B^* B \bv_k=\lambda_k(\lambda_k \bv_k+i\lambda_{2n+1-k} \bv_{2n+1-k}),\\
&&B^* B \bv_{2n+1-k}=\lambda_{2n+1-k}(\lambda_{2n+1-k} \bv_{2n+1-k}-i\lambda_k \bv_k).\label{B51}
\end{eqnarray}

\end{prop}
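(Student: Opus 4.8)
\textbf{Proof plan for Proposition \ref{Srate}.}

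The plan is to expand $B^*B\bv_k$ in the right singular basis $\{\bv_j\}$ of $\cB$ and show that the only nonzero contributions come from $\bv_k$ itself and its ``partner'' $\bv_{2n+1-k}$, with coefficients as claimed. The key fact I would use is the decomposition \eqref{53}, i.e. $G(B^*u)=[(\cB^\top G(u))^\top,\ (\cB^\top G(-iu))^\top]^\top$, together with the relations \eqref{58}--\eqref{59} between $\bu_k$ and $\bu_{2n+1-k}$ and the identities \eqref{53'} linking $B\bv_k$ to $\lambda_k G^{-1}(\bu_k)$. Because $\sloc$ and $B^*B$ are only \emph{real} linear, I must be careful to work throughout with the real forms $G(\cdot)$ and $\cB$, not with complex bilinearity.

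First I would compute $B\bv_k = \lambda_k G^{-1}(\bu_k)$ from \eqref{53'}. Then I would apply $B^*$ and pass to real coordinates: by \eqref{53} applied to $u=G^{-1}(\bu_k)$,
\[
G(B^*B\bv_k)=\lambda_k\,G\!\left(B^*G^{-1}(\bu_k)\right)
=\lambda_k\left[\begin{matrix}\cB^\top \bu_k\\ \cB^\top G(-iG^{-1}(\bu_k))\end{matrix}\right]
=\lambda_k\left[\begin{matrix}\cB^\top \bu_k\\ \cB^\top \bu_{2n+1-k}\end{matrix}\right],
\]
where the last step uses \eqref{58}. Now $\cB^\top \bu_k=\lambda_k\bv_k$ and $\cB^\top \bu_{2n+1-k}=\lambda_{2n+1-k}\bv_{2n+1-k}$ directly from the singular value decomposition of $\cB$ (the second equation of \eqref{53'}). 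Hence $G(B^*B\bv_k)=\lambda_k[\ \lambda_k\bv_k\ ;\ \lambda_{2n+1-k}\bv_{2n+1-k}\ ]$. The final step is to read this back as a complex vector: a vector of the form $[\,a\,;\,c\,]$ in $\IR^{2n}$ corresponds under $G^{-1}$ to $a+ic$ when $a,c\in\IR^n$ are real, but here $a=\lambda_k^2\bv_k$ and $c=\lambda_k\lambda_{2n+1-k}\bv_{2n+1-k}$ are vectors in $\IR^N$; I interpret $G^{-1}$ on $\IR^{2N}\to\IC^N$, so $G^{-1}([\,a\,;\,c\,])=a+ic$ gives $B^*B\bv_k=\lambda_k^2\bv_k+i\lambda_k\lambda_{2n+1-k}\bv_{2n+1-k}=\lambda_k(\lambda_k\bv_k+i\lambda_{2n+1-k}\bv_{2n+1-k})$, which is \eqref{B5}. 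For \eqref{B51} I would run the identical computation starting from $B\bv_{2n+1-k}=\lambda_{2n+1-k}G^{-1}(\bu_{2n+1-k})$ and using \eqref{59}, which gives $G(-iG^{-1}(\bu_{2n+1-k}))=\bu_k$ but with the sign bookkeeping of $-i$ producing the minus sign in front of $i\lambda_k\bv_k$.

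The main obstacle, such as it is, is keeping the real-versus-complex conversions straight: the operator $B^*B$ acts on $\IR^N$-indexed vectors via its real form, while the ``$i$'' appearing in \eqref{B5}--\eqref{B51} is genuine complex multiplication, so one must verify that the off-diagonal block $\cB^\top\bu_{2n+1-k}$ really plays the role of the imaginary part and that the signs coming from $G(\pm i\,\cdot)$ match. Once the identities \eqref{53}, \eqref{53'}, \eqref{58}, \eqref{59} are in hand this is a short and essentially bookkeeping-only argument, with no estimates required.
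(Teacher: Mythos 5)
Your argument is correct and is essentially the paper's own proof: both compute $B\bv_k=\lambda_k G^{-1}(\bu_k)$ from \eqref{53'}, apply $B^*$, and identify the imaginary part as $\cB^\top G(-iG^{-1}(\bu_k))=\cB^\top\bu_{2n+1-k}=\lambda_{2n+1-k}\bv_{2n+1-k}$ via Proposition \ref{Buu}, with the sign from \eqref{59} giving \eqref{B51}; the paper merely writes this out in explicit blocks $\bu_k^{\rm R},\bu_k^{\rm I}$ instead of invoking \eqref{53} wholesale. The only blemish is the harmless slip writing $\IR^{2n}$ where you mean $\IR^{2N}$, which you immediately correct.
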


\begin{proof} By definition, $\mathcal{B}\bv_k= \lambda_k {\bu}_k.$ Hence 
\[B\bv_k=(\Re[B]+i\Im[B]) \bv_k=\lambda_k (\bu_k^{\rm R}+i\bu_k^{\rm I})
\]
where 
\[
\bu_k=\lt[\begin{matrix} \bu_k^{\rm R}\\
\bu_k^{\rm I}
\end{matrix}
\rt],\quad \bu_k^{\rm R},\xi_k^{\rm I}\in \IR^n.
\]

On the other hand, $\cB^\top \bu_k=\lambda_k \bv_k$ and hence 
\beq
\label{59'}
\Re[B^\top]\bu_k^{\rm R}+\Im[B^\top] \bu_k^{\rm I}=\lambda_k \bv_k. 
\eeq

Now we compute $B^*B \bv_k$ as follows.
\beq
B^* B \bv_k&=& \lambda_k B^*(\bu_k^{\rm R}+i\bu_k^{\rm I})\label{60}\\
&=& \lambda_k (\Re[B^\top]-i\Im[B^\top])(\bu_k^{\rm R}+i\bu_k^{\rm I})\nn\\
&=&\lambda_k( \Re[B^\top]\bu_k^{\rm R}+\Im[B^\top]\bu_k^{\rm I})+i\lambda_k (\Re[B^\top]\bu_k^{\rm I}-\Im[B^\top] \bu_k^{\rm R})\nn\\
&=&\lambda_k^2\bv_k+i\lambda_k (\Re[B^\top]\bu_k^{\rm I}-\Im[B^\top] \bu_k^{\rm R})\nn
\eeq
by \eqref{59'}.

Notice that 
\beq
\nn\Re(B^\top)\bu_k^{\rm I}-\Im(B^\top) \bu_k^{\rm R}&=&\cB^\top \lt[
\begin{matrix}
\Re(-i G^{-1}(\bu_k))\\
\Im(-iG^{-1}(\bu_k))\end{matrix}\rt]\\
&=&\cB^\top G(-i G^{-1}(\bu_k))\nn\\
&=&\cB^\top \bu_{2n+1-k}\nn\\
&=&\lambda_{2n+1-k} \bv_{2n+1-k} \label{61}
\eeq
by Proposition \ref{Buu}. 

Putting \eqref{60} and \eqref{61} together, we have
\eqref{B5}. \eqref{B51} follows from the similar calculation. 
\end{proof}

\begin{cor}\label{Srate2}
For $k=1,2,\ldots, 2n$, $\sloc$ leaves invariant the subspace  spanned by $\{\bv_k, i\bv_{2n+1-k}\}$ and has the matrix form 
\begin{equation} \label{Sloc2}\sloc=\lambda_{2n+1-k} \left[
\begin{array}{cc}
\cos \theta_k  & \sin \theta_k\\
 -\sin \theta_k  & \cos \theta_k
\end{array}
\right],\quad \lambda_{2n+1-k}:=\cos \theta_k, 
\quad \lambda_k:=\sin \theta_k
\end{equation} 
   in the basis of $\{\bv_k, i \bv_{2n+1-k}\}$.  In particular, 
   \beq
   \label{69}
   \sloc \bv_1&=&0,\quad \sloc i\bv_{1}= i\bv_1\\
   \sloc \bv_{2n}&=& \bv_{2n},\quad \sloc i\bv_{2n}=0.
   \eeq
   where $ \bv_1=|\z_0|$. 
\end{cor}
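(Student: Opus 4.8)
The plan is to combine Proposition~\ref{Srate} (the action of $B^*B$ on the pair $\bv_k,\bv_{2n+1-k}$) with the explicit formula \eqref{49} for $\sloc$ valid at $|\z|=b$, and then simply read off the $2\times2$ matrix in the claimed basis. First I would fix $k\in\{1,\dots,2n\}$; by the pairing $k\leftrightarrow 2n+1-k$ it suffices to treat $k\le n$. I want to compute $\sloc \bv_k$ and $\sloc(i\bv_{2n+1-k})$ and show both lie in $\mathrm{span}_\IR\{\bv_k,\, i\bv_{2n+1-k}\}$.

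The key computation is as follows. Since $\bv_k\in\IR^N$ we have $\Re(\bv_k)=\bv_k$ and $\Im(\bv_k)=0$, so \eqref{49} gives $\sloc \bv_k=(I-B^*B)\bv_k$. By \eqref{B5}, $B^*B\bv_k=\lambda_k(\lambda_k\bv_k+i\lambda_{2n+1-k}\bv_{2n+1-k})$, hence
\begin{equation*}
\sloc \bv_k=(1-\lambda_k^2)\bv_k-i\lambda_k\lambda_{2n+1-k}\bv_{2n+1-k}=\lambda_{2n+1-k}^2\,\bv_k-\lambda_k\lambda_{2n+1-k}\,(i\bv_{2n+1-k}),
\end{equation*}
using \eqref{57} in the form $1-\lambda_k^2=\lambda_{2n+1-k}^2$. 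With $\lambda_{2n+1-k}=\cos\theta_k$, $\lambda_k=\sin\theta_k$ this is $\lambda_{2n+1-k}(\cos\theta_k\,\bv_k-\sin\theta_k\,(i\bv_{2n+1-k}))$, matching the first column of \eqref{Sloc2}. For the second basis vector, $v=i\bv_{2n+1-k}$ has $\Re(v)=0$, $\Im(v)=\bv_{2n+1-k}$, so \eqref{49} gives $\sloc(i\bv_{2n+1-k})=iB^*B\bv_{2n+1-k}$; applying \eqref{B51}, $B^*B\bv_{2n+1-k}=\lambda_{2n+1-k}(\lambda_{2n+1-k}\bv_{2n+1-k}-i\lambda_k\bv_k)$, so
\begin{equation*}
\sloc(i\bv_{2n+1-k})=\lambda_{2n+1-k}\big(\lambda_{2n+1-k}(i\bv_{2n+1-k})+\lambda_k\bv_k\big)=\lambda_{2n+1-k}(\sin\theta_k\,\bv_k+\cos\theta_k\,(i\bv_{2n+1-k})),
\end{equation*}
which is the second column of \eqref{Sloc2}. (Here I use $i\cdot(-i\bv_k)=\bv_k$ and that $\sloc$ is $\IR$-linear, which was noted right after Proposition~\ref{Linear}; one must be careful not to pull scalars like $i$ through $\sloc$.) This establishes invariance of the span and the matrix form.

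For the special cases \eqref{69}: take $k=1$. By Proposition~\ref{B*bound}, $\bv_1=|\z_0|$ and $\lambda_1=1$, so $\lambda_{2n}=\sqrt{1-\lambda_1^2}=0$ by \eqref{57}; then $\theta_1$ satisfies $\sin\theta_1=1$, $\cos\theta_1=0$, and \eqref{Sloc2} for $k=1$ becomes $\lambda_{2n}\,[\cdots]=0\cdot[\cdots]$, i.e. $\sloc\bv_1=0$ and $\sloc(i\bv_{2n})=0$. For the remaining two identities, apply \eqref{Sloc2} with $k=2n$, i.e. the pair $\{\bv_{2n}, i\bv_1\}$: here $\lambda_{2n+1-k}=\lambda_1=1=\cos\theta_{2n}$ and $\lambda_{2n}=0=\sin\theta_{2n}$, so the matrix is the identity, giving $\sloc\bv_{2n}=\bv_{2n}$ and $\sloc(i\bv_1)=i\bv_1$. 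I do not anticipate a genuine obstacle here — the content is entirely in Propositions~\ref{Srate} and~\ref{Buu}, already proved — so the only real care needed is bookkeeping: correctly tracking the roles of $k$ versus $2n+1-k$, not conflating $\sloc(i\bv_{2n+1-k})$ with $i\,\sloc(\bv_{2n+1-k})$ (false, since $\sloc$ is only real-linear), and verifying the sign in the off-diagonal entries against \eqref{B5}--\eqref{B51}.
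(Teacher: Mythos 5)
Your proposal is correct and follows essentially the same route as the paper: the paper's proof likewise combines \eqref{49} with Proposition \ref{Srate} (equations \eqref{B5}--\eqref{B51}), writing $B^*B$ as a $2\times 2$ matrix in the basis $\{\bv_k, i\bv_{2n+1-k}\}$ and reading off $\sloc$, which is exactly your column-by-column computation. Your handling of the special cases \eqref{69} via $\lambda_1=1$, $\lambda_{2n}=0$ also matches the intended argument, so there is nothing substantive to add.
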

\begin{proof}

By Proposition \ref{Srate},  the span of $\bv_k$ and $i\bv_{2n+1-k}$ is invariant under $B^*B$ and hence under $\sloc$  for $k=1,...,2n.$ Moreover, \eqref{B5} and \eqref{B51} imply  
\beqn
B^*B&=&\lt[ \begin{matrix}
\lambda_k^2& \lambda_k\lambda_{2n+1-k}\\
\lambda_k\lambda_{2n+1-k}&\lambda_{2n+1-k}^2
\end{matrix}\rt]
\eeqn
in  the basis of $\bv_k, i \bv_{2n+1-k}$. Hence by the definition \eqref{49} and Proposition \ref{Buu}, 
\begin{equation*} \sloc=\lambda_{2n+1-k}  
\left[
\begin{array}{cc}
 \lambda_{2n+1-k} & \lambda_k\\
  -\lambda_k  & \lambda_{2n+1-k}  
\end{array}
\right]
=\lambda_{2n+1-k} \left[
\begin{array}{cc}
\cos \theta_k  & \sin \theta_k\\
 -\sin \theta_k  & \cos \theta_k
\end{array}
\right],\quad \theta_k\in \IR.
\end{equation*} 
  Hence $\lambda_{2n+1-k}(\lambda_{2n+1-k}\pm i \lambda_k)$ are eigenvalues of $\sloc$. 

\end{proof}

In the next two propositions, we give a complete characterization of the
eigenstructure of $\sloc$.
 \begin{prop}\label{Seig}  If $v^*\bv_k=0, k=1,2,...,2n-1$, then  
\beq
Bv=0,\quad \sloc v=\Re(v).\label{70}
\eeq
  \end{prop}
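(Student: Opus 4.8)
The plan is to use the orthogonality hypothesis $v^*\bv_k = 0$ for $k = 1,\dots,2n-1$ together with the paired singular-vector structure established in Propositions \ref{B*bound}--\ref{Srate}. First I would observe that the right singular vectors $\{\bv_k\}_{k=1}^N$ of $\cB$ form an orthonormal basis of $\IR^N$, so the condition that $v$ (or rather $G(v)$, in the real picture) be orthogonal to $\bv_1,\dots,\bv_{2n-1}$ forces $G(v)$ to lie in the span of $\{\bv_{2n},\bv_{2n+1},\dots,\bv_N\}$. Now $\bv_{2n},\dots,\bv_N$ are exactly the right singular vectors with singular value $0$: indeed $\lambda_{2n}=0$ by Proposition \ref{B*bound}, and $\lambda_{2n+1}=\dots=\lambda_N=0$ by the setup (the rank of $\cB$ is at most $2n$). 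Hence $\cB^\top G(v) = 0$, i.e. $\Re[B^\top]\,\Re(v) + \Im[B^\top]\,\Im(v) = 0$ and the analogous relation from the second block of \eqref{Bv}; combining these gives $B v = (\Re[B] + i\Im[B])v = 0$ after matching real and imaginary parts. That establishes the first claim $Bv = 0$.

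For the second claim, I would go back to the formula \eqref{49} for $\sloc$ valid when $|\z|=b$ (which is the standing assumption here, since $B$ is taken as in \eqref{B} with $\z = \z_0$ and $|\z_0| = b$):
\[
\sloc v = (I - B^*B)\Re(v) + i\,B^*B\,\Im(v).
\]
Since $Bv = 0$ we have $B^*B v = 0$; but $\Re(v)$ and $\Im(v)$ are not individually in the kernel of $B$ in general, so I cannot simply drop both terms. Instead I would argue that $B\,\Re(v)$ and $B\,\Im(v)$ can be read off from $Bv = 0$: writing $Bv = B\Re(v) + iB\Im(v) = 0$ does not immediately split because $B$ is complex-linear, so $B\Re(v)$ and $iB\Im(v)$ are genuinely complex vectors that must cancel. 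The cleaner route is via the real picture: $\cB^\top G(v) = 0$ together with \eqref{Key} applied to $w = v$ gives $\|G(v)\| = \|\cB^\top G(-iv)\|$, but more usefully, $\cB^\top G(v) = 0$ is precisely the statement $\Re[B^*G^{-1}(\cdot)]$-type relation; I would instead use that $v$ in the $0$-eigenspace means, by the explicit $2\times 2$ block form in Corollary \ref{Srate2} degenerating when $\lambda_{2n+1-k}$ corresponds to the zero-singular-value directions, that $B^*B$ annihilates the relevant combination. The most direct finish: since $\cB^\top G(v)=0$, we get $B^*B\,\Im(v) = i^{-1}\big(\text{something}\big)$—to avoid this tangle I would simply verify $B^*B\,\Im(v) = -i\,\Re(v) + (\text{real})$ is false and instead note that $\sloc v = \Re(v)$ is equivalent, by \eqref{49}, to $B^*B\,\Re(v) = 0$ and $B^*B\,\Im(v) = 0$ simultaneously, i.e. to $B\Re(v) = 0$ and $B\Im(v) = 0$.

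So the crux reduces to: $Bv = 0$ together with $v \perp \bv_1,\dots,\bv_{2n-1}$ implies $B\Re(v) = 0$ and $B\Im(v) = 0$ separately. Here is where Proposition \ref{Buu} enters: the surviving directions $\bv_{2n},\dots,\bv_N$ are closed under the map $w \mapsto -iw$ in the real picture (this is the content of \eqref{58}--\eqref{59} extended to the zero block, since $G(-iG^{-1}(\bu_{2n})) = \bu_1$ would be wrong—rather one checks the $0$-eigenspace of $\cB$ is $i$-invariant because $\|\cB^\top G(w)\|^2 + \|\cB^\top G(-iw)\|^2 = \|G(w)\|^2$ forces both terms to vanish together on that space). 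Granting $i$-invariance of $\ker\cB$, if $G(v)\in\ker\cB$ then $G(-iv)\in\ker\cB$ too, so $\cB^\top G(-iv) = 0$, which unpacks to $B\,\Im(v) = 0$ and $B\,\Re(v) = 0$ as real-linear combinations. Then $B^*B\Re(v) = 0$, $B^*B\Im(v)=0$, and \eqref{49} collapses to $\sloc v = \Re(v)$. The main obstacle, and the step I would spell out most carefully, is establishing that $\ker\cB \subset \IR^N$ is invariant under the real representation of multiplication by $i$; this follows cleanly from \eqref{Key} but should be stated as a preliminary remark so the rest of the argument is just bookkeeping.
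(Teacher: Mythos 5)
You have a genuine gap, and it sits exactly at what you call the crux. The missing observation is elementary: each $\bv_k$ is a \emph{real} vector, so the complex orthogonality $v^*\bv_k=0$ is equivalent to the two real conditions $\bv_k^\top\Re(v)=0$ and $\bv_k^\top\Im(v)=0$. Hence $\Re(v)$ and $\Im(v)$ each lie in ${\rm span}\{\bv_{2n},\dots,\bv_N\}$, and since $\lambda_{2n}=\cdots=\lambda_N=0$ this span is contained in the null space of $\cB$; thus $\cB\Re(v)=0$ and $\cB\Im(v)=0$, i.e. $\Re(B)\Re(v)=\Im(B)\Re(v)=0$ and likewise for $\Im(v)$, so $B\Re(v)=0$ and $B\Im(v)=0$ \emph{individually}. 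This yields $Bv=0$ and, inserted into \eqref{49}, $\sloc v=(I-B^*B)\Re(v)+iB^*B\Im(v)=\Re(v)$ in one stroke; this is precisely the paper's proof. Your assertion that ``$\Re(v)$ and $\Im(v)$ are not individually in the kernel of $B$ in general'' is false under the hypothesis, and that misreading is what pushes you into the detour.

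The detour itself does not go through. Several steps are dimensionally inconsistent: here $v\in\IC^N$, so $G(v)\in\IR^{2N}$ cannot be fed to $\cB^\top\in\IR^{N\times 2n}$, the identity \eqref{Key} applies to $w\in\IC^n$ rather than to $v$, and the expression $\Re[B^\top]\Re(v)+\Im[B^\top]\Im(v)$ does not parse (it also mixes $\Re(v)$ with $\Im(v)$ in one equation, whereas the hypothesis gives two separate equations). More importantly, the proposed mechanism is the wrong tool: \eqref{Key} is an isometry identity, so for $w\neq 0$ it forces the two terms \emph{not} to vanish simultaneously, the opposite of ``forces both terms to vanish together.'' And $i$-invariance of the kernel cannot help: the complex kernel $\{u\in\IC^N: Bu=0\}$ is automatically $i$-invariant because $B$ is complex linear, yet this never separates $B\Re(v)$ from $B\Im(v)$ --- e.g. for $B=[1\ \ i]$ and $v=(1,i)^\top$ one has $Bv=0$ but $B\Re(v)\neq 0$. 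The separation must come from the hypothesis itself, via the real/imaginary splitting of $v^*\bv_k=0$, which your argument never invokes.
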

 \commentout{
 \begin{rmk}\label{rmk1}
 By simple dimension count, the subspace 
 \[
 \{v\in \IR^N: Bv=0\}
 \]
 has  $N-n$ (real) dimension.
 \end{rmk}
 }
  \begin{proof}
The condition  $v^*\bv_k=0$ is equivalent to  $\bv_k^\top\Re(v)=\bv_k^\top\Im(v)=0$.
So  we have
 \begin{equation*} 
G(B\Re(v))=\left[
\begin{array}{c} \Re(B\Re(v)) \\
 \Im(B\Re(v)) 
 \end{array}\right]=
\left[
\begin{array}{c} \Re(B)\Re(v) \\
 \Im(B)\Re(v) 
 \end{array}\right]=
  \mathcal{B}\Re(v)=0
 \end{equation*} 
 implying $B\Re(v)=0$. 
  Likewise, $B\Im(v)=0$. So, we have $Bv=0$. 
  
 By the definition of $\sloc$ and $Bv=0$, 
 \[
 \sloc v=(I-B^*B)\Re(v)+iB^*B \Im(v)=\Re(v).
 \]

\end{proof}

\begin{cor}\label{cor5.8}
 The fixed point set of $\sloc$ contains 
the subspace
\[
 E_1=\hbox{\rm null}_\IR (\cB)\subset \IR^N%=\{v\in \IR^N: \cB v=0\}
 \]
 and the null space of $\sloc$ contains the subspace
 \[
E_0= iE_1. % =\{iv: v\in \IR^N, \cB v=0\}
 \]
 \commentout{
 as well as the $0$-real-dimensional subspace
  \[
E_3 =\{ v=B^*  u: \Im(B^* u)=0\}.
 \]
 }
 Moreover, if $\lambda_2<1$, then 
 \[
E_2^\perp=E_0\oplus_\IR  E_1
\]  where
\[
%E_2=\hbox{\rm span}\{ \eta_1, i\eta_{1}\},\quad 
 E_2=\hbox{\rm span}_\IR\{\eta_k, i\eta_{k}\}_{k=1}^{2n-1}.% \quad E_4= \hbox{\rm span}\{\eta_{2n}, i\eta_{2n}\}.
 \]
\end{cor}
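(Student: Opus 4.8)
The plan is to assemble the statement from the propositions already proved about the eigenstructure of $\sloc$. First I would record the first two claims, which are essentially restatements of Proposition \ref{Seig} together with the observation that $E_1 = \hbox{null}_\IR(\cB) = \{v \in \IR^N : v^*\bv_k = 0,\ k=1,\dots,2n\}$. Indeed, the right singular vectors $\bv_1,\dots,\bv_{2n}$ span the row space of $\cB$ (equivalently the range of $\cB^\top$), so $\hbox{null}_\IR(\cB)$ is exactly their orthogonal complement in $\IR^N$; for $v$ in this complement we have in particular $v^*\bv_k = 0$ for $k=1,\dots,2n-1$, so Proposition \ref{Seig} gives $Bv=0$ and $\sloc v = \Re(v)$. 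To conclude $\sloc v = v$ on $E_1$ I need $\Re(v) = v$, i.e. that $E_1$ consists of real vectors; this follows since $\cB$ and its kernel are real, so $v \in E_1 \Rightarrow \Im(v) \in E_1$ as well, but more to the point, the natural copy of $\hbox{null}_\IR(\cB)$ sitting inside $\IR^N$ is already real-valued, and for such $v$, $\Re(v)=v$. For $E_0 = iE_1$: if $v = iw$ with $w \in E_1$ real, then $\Re(v) = 0$ and $Bv = iBw = 0$, so from \eqref{49}, $\sloc v = (I-B^*B)\Re(v) + iB^*B\,\Im(v) = iB^*B\, w$; since $Bw=0$ this is $0$, giving $E_0 \subseteq \hbox{null}(\sloc)$.

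Next I would handle the decomposition $E_2^\perp = E_0 \oplus_\IR E_1$ under the hypothesis $\lambda_2 < 1$. The idea is a dimension count plus an orthogonality check. By Proposition \ref{Buu}, $\lambda_2 < 1$ forces $\lambda_{2n-1} = \cos\theta$ with $\sin\theta = \lambda_2 < 1$, so $\lambda_{2n-1} > 0$; combined with $\lambda_1 = 1 > \lambda_2 \ge \dots \ge \lambda_{2n-1} > 0 = \lambda_{2n}$, the singular values $\lambda_1,\dots,\lambda_{2n-1}$ are all nonzero and $\lambda_{2n} = 0$ is the only vanishing one among the first $2n$. Hence the vectors $\bv_1,\dots,\bv_{2n-1}$ together with $i\bv_1,\dots,i\bv_{2n-1}$ span $E_2$ (of real dimension $2(2n-1) = 4n-2$), while $E_1 = \hbox{null}_\IR(\cB) = \hbox{span}_\IR\{\bv_{2n}, \bv_{2n+1},\dots,\bv_N\}$ has real dimension $N - 2n + 1$ and $E_0 = iE_1$ has the same dimension. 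One checks $E_0 \perp E_1$ (an element of $E_1$ is real, an element of $E_0$ is purely imaginary, so their real inner product $\Re(\bar u v)$ vanishes) and that $E_0 \oplus E_1 \perp E_2$: a real $\bv_j$ with $j \le 2n-1$ is orthogonal to every real $\bv_k$ with $k \ge 2n$ and to every $i\bv_k$; and $i\bv_j$ is likewise orthogonal to the real $\bv_k$'s and to the $i\bv_k$'s for $k\ne j$, $k\ge 2n$. Finally the dimensions add up: $\dim_\IR E_2 + \dim_\IR E_0 + \dim_\IR E_1 = (4n-2) + (N-2n+1) + (N-2n+1) = 2N$, so $E_2^\perp = E_0 \oplus_\IR E_1$ as claimed.

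The main obstacle, and the step to get exactly right, is the bookkeeping in the dimension count — in particular making sure that $\lambda_2 < 1$ really is the precise condition that separates $\{\bv_k, i\bv_{2n+1-k}\}_{k=1}^{2n-1}$ (the span of which is $E_2$) from $E_0 \oplus E_1$, with no overlap and no gap. Here one uses Corollary \ref{Srate2}: for each $k$ the plane $\hbox{span}_\IR\{\bv_k, i\bv_{2n+1-k}\}$ is $\sloc$-invariant with the rotation-scaling matrix \eqref{Sloc2}, and $\lambda_2<1$ guarantees that for $k=1,\dots,2n-1$ these planes have $\lambda_{2n+1-k} \ne 0$ (so they contribute no fixed vectors or null vectors beyond what is already accounted for), whereas the degenerate case $k=2n$ contributes precisely $\bv_{2n}$ (fixed) and $i\bv_{2n}$ (null) — but $\bv_{2n} \in E_1$ and $i\bv_{2n} \in E_0$ already, consistent with the claim. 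A secondary point requiring care is confirming $\dim_\IR E_1 = N - 2n + 1$ rather than $N-2n$: this is where the rank-2 hypothesis on $x_0$ enters indirectly, via the fact (from Proposition \ref{B*bound} and its corollaries) that $\cB$ has rank exactly $2n-1$ (not $2n$), so that $\lambda_{2n} = 0$ with one-dimensional kernel contribution; I would invoke this and the isometry of $B^*$ to pin down the rank, then everything closes.
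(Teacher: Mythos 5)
Your proposal is correct in substance and uses the same ingredients as the paper's own proof: Proposition \ref{Seig} and formula \eqref{49} for the first two claims, and, for the decomposition, the facts $\lambda_1=1$, $\lambda_{2n}=0$ (Proposition \ref{B*bound}) and $\lambda_{2n-1}=\sqrt{1-\lambda_2^2}>0$ when $\lambda_2<1$ (Proposition \ref{Buu}). The only structural difference is the direction of the inclusion you verify: the paper shows $E_2^\perp\subseteq E_0\oplus_\IR E_1$ by applying Proposition \ref{Seig} to $\Re(v)$ and $\Im(v)$ separately and then invokes the trivial intersection of $E_2$ with $E_0$ and $E_1$, whereas you show $E_0\oplus_\IR E_1\subseteq E_2^\perp$ and close with the count $(4n-2)+2(N-2n+1)=2N$. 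Both are the same orthogonality-plus-dimension bookkeeping in the singular basis of $\cB$, so nothing is gained or lost either way.

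One statement in your first paragraph is false and should be repaired, although it does not damage the result: $\bv_1,\dots,\bv_{2n}$ do \emph{not} span the row space of $\cB$, since $\lambda_{2n}=0$ always (Proposition \ref{B*bound}), so $\bv_{2n}\in \hbox{null}_\IR(\cB)$ and $\hbox{null}_\IR(\cB)$ is strictly larger than the orthogonal complement of $\hbox{span}\{\bv_1,\dots,\bv_{2n}\}$. Moreover, the first two claims are asserted without the hypothesis $\lambda_2<1$, so you may not assume every $v\in\hbox{null}_\IR(\cB)$ is orthogonal to $\bv_1,\dots,\bv_{2n-1}$ (that requires $\lambda_{2n-1}>0$). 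The detour through the singular vectors is unnecessary: for real $v$ with $\cB v=0$ one has $\Re(B)v=\Im(B)v=0$, hence $Bv=0$, and \eqref{49} gives $\sloc v=\Re(v)=v$ directly, exactly as you argue for $E_0$. Your later identification $E_1=\hbox{span}_\IR\{\bv_{2n},\dots,\bv_N\}$ is legitimate because there the hypothesis $\lambda_2<1$ is in force. Finally, the rank of $\cB$ being exactly $2n-1$ is, within this corollary, a consequence of $\lambda_2<1$ together with $\lambda_{2n}=0$; the rank-$2$ assumption on $x_0$ is not needed here, as it enters only in Theorem \ref{cor5.2} where $\lambda_2<1$ is established.
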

\begin{proof}
Note that $\eta_{2n} $ and $i\eta_{2n}$ are excluded from $E_2$ 
because $\eta_{2n}\in E_1, i\eta_{2n}\in E_0$.
On the other hand the null vector $\eta_1$ does not belong in $E_0$
and the eigenvector $i\eta_1$ for eigenvalue 1 does not belong in $E_1$
for an obvious reason. 

For any $v\in \IC^N$, we can write $v=\Re(v)+i\Im(v)$. 
By Proposition \ref{Seig}, if $\Re(v), \Im(v) \in E_2^\perp$, then 
\beqn
\cB\Re(v)&=&0,\quad \sloc(\Re(v))=\Re(v)\\
\cB\Im(v)&=& 0,\quad \sloc (\Im(v))=0. 
\eeqn
In other words, $\Re(v)\in E_1$ and $\Im(v)\in E_0$. 

On the other hand, if $\lambda_2<1$, then $\lambda_{2n-1}>0$ and $E_2$ has no nontrivial intersection with
either $E_0$ or $E_1$. Hence, $
E_2^\perp=E_0\oplus_\IR  E_1.
$

\end{proof}

\subsection{Proof of Theorem \ref{thm1}}

With $ v^{(k)}=\Om_0^* (\alpha^{(k)}\z^{(k)}-\z_0)$,  we can express   FDR as
\begin{equation}\label{viter}
v^{(k+1)}=\sloc v^{(k)} +o(\|v^{(k)}\|).
\end{equation}
by  Proposition ~\ref{Linear}. Moreover,\beq
\label{70'}
B \sloc v &= &(I-BB^*) B\Re(v)+iBB^* B\Im(v)\\
&=&iB\Im(v),\quad \forall v\in \IC^N,\nn
\eeq
 by \eqref{49} and the isometry property $BB^*=I$.

At the optimal phase $\alpha^{(k)}$ adjustment for $\z^{(k)}$,  we have  
\[
\Im(\z_0^*\alpha^{(k)} \z^{(k)})=0
\]
and hence  \begin{equation} \langle v^{(k)}, i |\z_0|\rangle = \langle  \alpha^{(k)} \z^{(k)}-\z_0 ,i  \z_0\rangle=\langle  \alpha^{(k)} \z^{(k)}, i  \z_0\rangle =\Re(({\alpha^{(k)} \z^{(k)} })^*i \z_0)=0\end{equation}
which implies  that  $\Im(v^{(k)})$ is orthogonal to the leading right singular vector $\bv_1=|\z_0|$ of $\cB$:
\beq
\bv_1^T\Im(v^{(k)})=0\label{74}
\eeq
cf. Proposition \ref{B*bound}.

By \eqref{70'} and \eqref{74}, 
\beqn
\|B \sloc v^{(k)}\|&=&\|B\Im(v^{(k)})\|= \|\cB \Im(v^{(k)})\|\leq\lambda_2\|\Im(v^{(k)})\|.\\
%&\leq&\|B^*\|_\perp  \|v^{k_0}\|.
\eeqn
By induction for $k=1,2,\ldots$ 
\begin{equation}\| A(\alpha^{(k+1)} \z^{(k+1)}-\z_0)\|=
 \|B v^{(k+1)}\|\le \gamma^{k}\|v^{(1)}\|, 
\end{equation}  
for some $\gamma\in [\lambda_2, 1)$, if $\|v^{(1)}\|$ is sufficiently small. 

Therefore for  $x^{(k)}=A\z^{(k)}$,
\beq
\alpha^{(k)}x^{(k)}-x_0&=&B\lt(\Om_0^* \alpha^{(k)} \z^{(k)}-\Om_0^* \z_0\rt)=B v^{(k)},\quad k=1,2,...
\eeq
which tends to zero geometrically with a rate at most $\gamma<1$.

 \section{Spectral gap}\label{sec:gap}
 
In this section, we  prove the spectral gap condition
 \eqref{lam2} with at least {\em one}  {\em oversampled  coded} diffraction pattern.
This is the immediate consequence of the following  two results.

\begin{prop}\label{prop5.4} Let $A^*$ be isometric and $B=A\Om_0$.  Then  $\|\Im(B^*x)\|=1$ holds for some unit vector $x $ if and only if 
\beq\label{95}
\Re(a_j^* x)\Re(a_j^*x_0)+\Im( a_j^*x) \Im( a_j^*x_0)=0, \; \forall j=1,... N,
\eeq
where $a_j$ are the columns of $A$, 
or equivalently
\beq\label{95'}
{A^*x\over |A^*x|}= \sigma\odot \om_0 
\eeq
where the components of $\sigma$ are either 1 or -1, i.e.
\[
 \sigma(j)\in \{1, -1\}, \quad \forall j=1,... N.
 \]
 %   \begin{equation}\label{UniqueCS}
%\langle a_j, x\rangle \langle a_j, x_0\rangle+\langle a_j, ix\rangle \langle a_j, ix_0\rangle=0, \; \forall j=1,\ldots, N,
%\end{equation} or equivalently $x=\pm i  x$.
 \end{prop}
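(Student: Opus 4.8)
The plan is to unwind the definition of $B = A\Omega_0$ and compute $\|\Im(B^*x)\|^2$ componentwise, then use the isometry of $A^*$ (equivalently of $B^*$) to turn the condition ``$\|\Im(B^*x)\|=1$ for some unit vector $x$'' into a pointwise identity. First I would note that $B^* = \Omega_0^* A^*$, so the $j$-th component of $B^*x$ is $\overline{\omega_0(j)}\, a_j^* x = \overline{(a_j^* x_0)/|a_j^* x_0|}\, a_j^* x$. Since $B^*$ is a complex isometry, $\|B^*x\|^2 = \|x\|^2 = 1$, and $\|B^*x\|^2 = \|\Re(B^*x)\|^2 + \|\Im(B^*x)\|^2$. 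Therefore $\|\Im(B^*x)\| = 1$ holds (with $\|x\|=1$) if and only if $\|\Re(B^*x)\| = 0$, i.e. $\Re(B^*x) = 0$, i.e. every component $\overline{\omega_0(j)}\, a_j^* x$ is purely imaginary.

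Next I would translate ``$\overline{\omega_0(j)}\, a_j^* x$ is purely imaginary'' back into the real bilinear form \eqref{95}. Writing $a_j^* x = \Re(a_j^*x) + i\,\Im(a_j^*x)$ and $\omega_0(j) = (a_j^* x_0)/|a_j^* x_0|$, the real part of $\overline{\omega_0(j)}\, a_j^* x$ is $|a_j^* x_0|^{-1}\big(\Re(a_j^*x)\Re(a_j^*x_0) + \Im(a_j^*x)\Im(a_j^*x_0)\big)$, so vanishing of this real part for all $j$ is exactly \eqref{95} (the positive factor $|a_j^*x_0|^{-1}$ is irrelevant, and on any index where $|a_j^*x_0|=0$ one handles the convention $\omega_0(j)=1$ separately, noting such indices are excluded or trivially satisfied). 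For the equivalence with \eqref{95'}: the condition says each $a_j^*x$ is a real multiple of $i\,\omega_0(j)$, hence $(a_j^*x)/|a_j^*x| = \pm i\,\omega_0(j)$ whenever $a_j^*x \neq 0$. Here I would absorb the factor $i$ — so the ``$\sigma(j)\in\{1,-1\}$'' in the statement should be read modulo this rotation, or the statement intends $\sigma$ to range over $\{i,-i\}$ scaled appropriately; either way the content is that $A^*x/|A^*x|$ equals $\omega_0$ up to an independent sign on each component.

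I expect the main obstacle to be bookkeeping at indices where $a_j^* x_0 = 0$ or $a_j^* x = 0$, where the normalizations $\omega_0(j)$ and $(A^*x/|A^*x|)(j)$ are set by convention rather than by the data; one must check that \eqref{95} is automatically satisfied there (the bilinear form has a zero factor) and that \eqref{95'} is interpreted only on the support of $A^*x$, or argue that for the relevant $x_0$ (rank $\geq 2$, isometric mask) such degenerate indices do not occur generically. The rest is the elementary Pythagorean argument above plus separating real and imaginary parts, which is routine.
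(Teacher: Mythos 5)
Your proof is correct and takes essentially the paper's route: the paper bounds $\|\Im(B^*x)\|^2$ componentwise by Cauchy--Schwarz and sums using the isometry of $A^*$, which is the same computation as your Pythagorean identity $\|\Re(B^*x)\|^2+\|\Im(B^*x)\|^2=\|B^*x\|^2=\|x\|^2$ for the isometry $B^*$, with equality precisely when $\Re(B^*x)=0$, i.e. \eqref{95}. Your side remark is also well taken: the equality case gives $a_j^*x/|a_j^*x|=\pm i\,\om_0(j)$ on the support, so \eqref{95'} should be read with $\sigma(j)\in\{i,-i\}$ (equivalently, one applies the magnitude-retrieval uniqueness of Proposition \ref{thm3} to $-ix$), which is exactly what is consistent with the conclusion $u=\pm i x_0/\|x_0\|$ in Theorem \ref{cor5.2}, and the degenerate indices with $a_j^*x_0=0$ that you flag occur with probability zero for the random masks considered.
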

\begin{proof} We have 
\beq
\nn\Im(B^*x)%&=&\Im(\om_0^*\odot A^* x)\\
&=&\Im \lt({\overline{A^* x_0}\over |A^* x_0|} \odot A^*x\rt)\\
&=&\sum_{j=1}^N {\Re(a_j^* x_0) \Im( a_j^* x)-\Im( a_j^*x_0) \Re(a_j^* x)\over
(\Re^2(a_j^*x_0)+\Im^2( a_j^*x_0))^{1/2}}\label{57'}
\eeq
and hence 
\beq\nn
\|\Im(B^* x)\|^2
&\le &
 \sum_{j=1}^N  \Re^2( a_j^* x)+ \Im^2(a_j^*x)=\sum_{j=1}^N |a_j^*x|^2=\|A^* x\|^2=\|x\|^2
\eeq
by the Cauchy-Schwartz inequality and isometry.

In view of \eqref{57'}, the inequality becomes an equality if and only  if  \eqref{95} or \eqref{95'} holds. 
 \end{proof}
 
 \begin{prop}\label{thm3}  (Uniqueness of Fourier magnitude retrieval)
 Let $x_0$ be a given rank $\geq 2$ object  and $\mu$ be continuously and  independently
distributed on the unit circle. Let 
\[
B:=A\,\,\diag\lt\{\om_0\rt\},\quad \om_0:={A^*x_0\over|A^* x_0|}
\]
where $A^*$ is the given by \eqref{one}.
If 
\beq
\label{mag}
\measuredangle A^*\hat x=\pm \measuredangle A^* x_0
\eeq
 (after proper
 adjustment of the angles wherever the coded diffraction patterns vanish) where
 the $\pm$ sign may be  pixel-dependent, then almost surely $\hat x= c x_0$ for some constant $c\in \IR$. 
\end{prop}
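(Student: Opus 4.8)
The plan is to reduce the hypothesis \eqref{mag} to an algebraic identity among the coded autocorrelations of $x_0$ and $\hat x$, then invoke the uniqueness-of-autocorrelation machinery underlying Proposition \ref{prop6.1}. First I would observe that the condition $\measuredangle A^*\hat x=\pm\measuredangle A^* x_0$ (pixelwise) is exactly the statement that, at each sampling point $\omega$, the coded Fourier transform $\widehat{g}(\omega)$ of the masked object $\hat g=\mu\odot\hat x$ is a real nonnegative multiple of either $\widehat{g_0}(\omega)$ or $\overline{\widehat{g_0}(\omega)}$, where $g_0=\mu\odot x_0$. Writing $\widehat{g}(\omega)=r(\omega)\,\widehat{g_0}(\omega)$ on the set $S_+$ where the $+$ sign holds and $\widehat{g}(\omega)=r(\omega)\,\overline{\widehat{g_0}(\omega)}$ on $S_-$, with $r(\omega)\ge 0$, the game is to show that the partition $\{S_+,S_-\}$ and the function $r$ must be trivial almost surely over the mask.

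The key step I expect to carry the argument is a \emph{continuity/analyticity} observation combined with the genericity of $\mu$. For each fixed realization of the mask, the two trigonometric polynomials $\omega\mapsto \widehat{g}(\omega)$ and $\omega\mapsto\widehat{g_0}(\omega)$ are entire functions of $\omega$ (finite exponential sums), so the ratios $\widehat{g}/\widehat{g_0}$ and $\widehat{g}/\overline{\widehat{g_0}}$ extend meromorphically; on the oversampling grid $\cL$ the pointwise constraint forces, by counting and by the rank-$\ge 2$ hypothesis (which is where $d\ge 2$ and the two-dimensional convex hull enter — it prevents the degenerate one-dimensional factorization ambiguities), that one of the two sets $S_+, S_-$ is empty and $r$ is constant. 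Concretely, if both $S_+$ and $S_-$ were nonempty, one would obtain a nontrivial polynomial relation of the form $\widehat{g_0}(\omega)\,q(\omega)=\overline{\widehat{g_0}(\omega)}\,\tilde q(\omega)$ forced to hold on enough grid points; translating back to physical space this says the autocorrelation $R_{g_0}$ satisfies a nontrivial convolution identity with its reflection, which for a rank-$\ge 2$ support fails for $\mu$ outside a measure-zero set (this is precisely the mechanism behind the uniqueness statement in Proposition \ref{prop6.1} and the cited references \cite{unique}). Hence almost surely $S_-=\emptyset$ (or $S_+=\emptyset$, handled by replacing $\hat x$ with its conjugate-type reflection) and $\widehat{g}=r\,\widehat{g_0}$ with $r$ a single nonnegative constant on all of $\cL$.

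Once $\widehat{g}(\omega)=c\,\widehat{g_0}(\omega)$ for a fixed $c\ge 0$ at every grid point, inverting the (oversampled, hence invertible) Fourier transform gives $g=c\,g_0$, i.e. $\mu\odot\hat x=c\,\mu\odot x_0$, and since $\mu$ is nonvanishing we divide it out to get $\hat x=c\,x_0$ with $c\in\IR$, which is the claim. The main obstacle, and the step deserving the most care, is ruling out the mixed case where the $\pm$ sign genuinely varies from pixel to pixel: I would handle it by showing that any such mixed assignment yields, after multiplying out denominators, a fixed nonzero trigonometric-polynomial identity whose coefficients depend polynomially on the mask phases $e^{i\phi(\bn)}$, so that the bad set of masks is the zero set of a nontrivial real-analytic function of the $\phi(\bn)$ — nontriviality being guaranteed by the rank-$\ge 2$ assumption — and hence has Lebesgue measure zero. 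This is the only place the continuous distribution of $\mu$ and the dimension hypothesis are used.
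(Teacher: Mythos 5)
There is a genuine gap at the heart of your argument, and it is precisely the step you defer to ``counting'' and to a measure-zero argument: the claim that one of $S_+,S_-$ must be empty and that $r$ is constant. First, the constancy of $r$ does not follow from any counting of grid points; it is exactly the content that requires an algebraic factorization argument. Second, and more seriously, your measure-zero argument has the quantifiers in the wrong order: the ``nontrivial polynomial relation'' whose failure you invoke has coefficients depending on the unknown $\hat x$ (through your $q,\tilde q$), so the bad set of masks you exclude depends on $\hat x$. Since $\hat x$ ranges over a continuum and, in the intended application (Theorem \ref{cor5.2}), is itself chosen after the mask is drawn, you cannot discard a null set of masks per candidate $\hat x$. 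The paper's proof is structured to avoid exactly this: all randomness is spent on properties of the masked true object $f=\mu\odot x_0$ alone, namely that its $z$-transform is irreducible up to a monomial (Proposition \ref{NonIR}) and has no conjugate symmetric factors (Proposition \ref{NonCS}); then a \emph{deterministic} Hayes-type lemma (Proposition \ref{thm:MR}) handles every candidate $g$ at once. That lemma is proved by forming $h=f\star\overline{g(-\cdot)}$, whose $z$-transform $H(\bz)=F(\bz)\overline{G(\bar\bz^{-1})}$ is determined by its samples on $\cL$ because $h$ is supported on $\widetilde\cM$ (this is where oversampling enters and where your ``enough grid points'' would have to be made precise); the phase hypothesis makes $H$ real on $\cL$, hence $H(\bz)=\overline{H(\bar{\bz}^{-1})}$ identically, and unique factorization together with the absence of conjugate symmetric factors yields $G=QF$ with $Q$ a polynomial, which the phase condition then forces to be a real constant. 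None of these steps is present in your sketch, and they are not replaceable by genericity over $\mu$ alone.

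A secondary issue is your reading of \eqref{mag}. As the condition is actually used (via Proposition \ref{prop5.4}, eq.\ \eqref{95'}), the hypothesis is $A^*\hat x/|A^*\hat x|=\sigma\odot\om_0$ with $\sigma(j)\in\{1,-1\}$, i.e.\ the phase \emph{difference} is $0$ or $\pi$ pointwise, so that $\widehat{g}(\om)$ is a real (possibly negative) multiple of $\widehat{g_0}(\om)$ at each grid point. Your reformulation, in which $\widehat{g}(\om)$ may instead be a nonnegative multiple of $\overline{\widehat{g_0}(\om)}$, is a different condition; it introduces the conjugate branch $S_-$ and the attendant twin-image complications, which simply do not arise under the intended reading (there the pointwise sign pattern is absorbed automatically by the real-valuedness of $H$ on $\cL$). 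So even apart from the quantifier problem, the dichotomy you are trying to rule out is not the one the proposition requires, and the reduction you would need to the autocorrelation machinery is not carried out.
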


The proof  of Proposition \ref{thm3} is  given in Appendix A.

Now we can prove the spectral gap theorem needed for geometric  convergence of FDR.

  \begin{thm}\label{cor5.2} Let $\Phi$ be the oversampled discrete Fourier transform. Let $x_0$ be a rank $\geq 2$ object and at least one of $\mu_j, j=1,...,\ell\geq 2$, be independently and continuously 
distributed on the unit circle. Let 
 \beq
 \label{1half}
A^*=c \lt[\begin{matrix}
\Phi\,\, \diag\{\mu_1\}\\
...\\
\Phi\,\, \diag\{\mu_\ell\}
\end{matrix}\right]
\eeq
 be  isometric.
Let
\[
B:=A\,\,\diag\lt\{\om_0\rt\},\quad \om_0:={A^*x_0\over|A^* x_0|}. 
\]

Then with probability one
\beq\label{iff}
\|\Im(B^* u)\|=1,\quad \|u\|=1\quad \hbox{iff}\quad 
u=\pm i x_0/\|x_0\| 
\eeq
and hence 
\beq\label{lam2'}
\lambda_2= \max_{u\in \IC^n\atop u\perp ix_0}  \|u\|^{-1} \|\Im(B^* u)\| <1.
 \eeq
 \end{thm}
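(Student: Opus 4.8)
The plan is to deduce Theorem~\ref{cor5.2} by combining Proposition~\ref{prop5.4} with Proposition~\ref{thm3} (the uniqueness of Fourier magnitude retrieval), after reducing the $\ell$-mask case to the single-mask situation covered by Proposition~\ref{thm3}. First I would invoke Proposition~\ref{prop5.4}: the equality $\|\Im(B^*x)\|=1$ for a unit vector $x$ forces $A^*x/|A^*x| = \sigma\odot\om_0$ with $\sigma(j)\in\{1,-1\}$, i.e. $\measuredangle A^*x = \pm\measuredangle A^*x_0$ pixelwise (with the usual convention wherever the coded patterns vanish). This is exactly the hypothesis \eqref{mag} of Proposition~\ref{thm3}, provided we can apply that proposition to the stacked propagation matrix \eqref{1half}.

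The second step is that reduction. Proposition~\ref{thm3} is stated for $A^*$ of the single-mask form \eqref{one}. To apply it to \eqref{1half}, I would isolate the block corresponding to an index $j_0$ for which $\mu_{j_0}$ is independently and continuously distributed on the unit circle. The condition $\measuredangle A^*x = \pm\measuredangle A^*x_0$ for the full stacked matrix implies in particular the same identity restricted to that block, namely $\measuredangle(\Phi\,\diag\{\mu_{j_0}\}x) = \pm\measuredangle(\Phi\,\diag\{\mu_{j_0}\}x_0)$ pixelwise. Since $\Phi\,\diag\{\mu_{j_0}\}$ (up to the normalizing constant, which is irrelevant to the phase) is precisely a single-mask propagation matrix of the type \eqref{one}, Proposition~\ref{thm3} applies with the randomness coming solely from $\mu_{j_0}$: almost surely $x = c x_0$ for some $c\in\IR$. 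Combined with $\|x\|=1$ this gives $x = \pm x_0/\|x_0\|$.

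The third step extracts \eqref{iff} and \eqref{lam2'}. We have shown $\|\Im(B^*x)\|=1, \|x\|=1$ forces $x=\pm x_0/\|x_0\|$ almost surely; conversely, Corollary~\ref{cor5.5} and Proposition~\ref{B*bound} (with $\lambda_1=1$, $\bu_1=G(x_0)$) show that $x=\pm x_0/\|x_0\|$ realizes the value $1$ for $\|\Im[B^*(\cdot)]\|$ on the orthogonal complement direction — more precisely, the maximum of $\|u\|^{-1}\|\Im(B^*u)\|$ over $iu\perp x_0$ equals $\lambda_2$ by Corollary~\ref{cor5.5}, and the characterization just proved shows the unconstrained supremum $1$ is attained \emph{only} at $\pm x_0/\|x_0\|$, which do \emph{not} satisfy $ix_0\perp x_0$ in the relevant sense (equivalently $u\perp ix_0$). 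Here one must be careful about which orthogonality is in play: $iu\perp x_0$ means $\Re(\overline{iu}\,x_0)=0$, i.e. $\Im(u^*x_0)=0$; the extremal vectors $u=\pm x_0/\|x_0\|$ satisfy this, so one instead uses the statement as in \eqref{lam2'} with $u\perp ix_0$, i.e. $\Re(u^*(ix_0))=0$, equivalently $\Re(u^* x_0)\cdot 0 \ldots$ — I would state it exactly as in Corollary~\ref{cor5.5}, where $\bu_2$ is the maximizer subject to $u\perp\bu_1=G(x_0)$, and note that the only unit vectors achieving $\|\Im(B^*u)\|=1$ are $\pm x_0/\|x_0\|$, which are \emph{not} orthogonal to $x_0$; hence on the constrained set the maximum $\lambda_2$ is strictly below $1$.

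The main obstacle is the measure-theoretic bookkeeping in the second step: Proposition~\ref{thm3} supplies an almost-sure conclusion for each fixed $x_0$ with respect to the law of a single mask, and one must make sure that randomizing only the block $\mu_{j_0}$ (holding the other masks arbitrary) still yields a probability-one event, and that the ``proper adjustment of the angles wherever the coded diffraction patterns vanish'' in \eqref{mag} is consistent with the $\sigma$-vector language of \eqref{95'} — in particular that the finitely many pixels where $|\Phi\,\diag\{\mu_{j_0}\}x_0|=0$ do not cause trouble. This is routine but needs to be spelled out; everything else is a direct chaining of the two cited propositions.
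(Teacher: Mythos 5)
Your overall architecture---Proposition \ref{prop5.4} to pass from $\|\Im(B^*u)\|=1$ to the phase relation \eqref{mag}, then the magnitude-retrieval uniqueness of Proposition \ref{thm3} applied to the single oversampled coded block, with the conditioning on the remaining masks---is exactly the paper's proof, and your step 2 reduction is fine. The genuine problem is in how you close the argument. Taking \eqref{95'} at face value you conclude that the unit vectors achieving $\|\Im(B^*u)\|=1$ are $u=\pm x_0/\|x_0\|$. That cannot be right: $B^*x_0=|A^*x_0|$ is real, so $\|\Im(B^*(x_0/\|x_0\|))\|=0$, and the theorem's own \eqref{iff} names $\pm i x_0/\|x_0\|$ as the extremizers. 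The equality case of Proposition \ref{prop5.4} is really the orthogonality condition \eqref{95}: the phase of $a_j^*u$ differs from that of $a_j^*x_0$ by $\pm\pi/2$ at every pixel (the formulation \eqref{95'} is loosely worded and is consistent with \eqref{95} only if one reads the signs as $\pm i$, i.e. applies it to $iu$). Hence it is $iu$, not $u$, that satisfies \eqref{mag}, and Proposition \ref{thm3} must be applied to $iu$, giving $iu=cx_0$ with $c\in\IR$ and therefore $u=\pm i x_0/\|x_0\|$.

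This error then forces the incorrect patch in your step 3. The constraints $iu\perp x_0$ and $u\perp ix_0$ are the \emph{same} condition: with the real inner product $\langle u,v\rangle=\Re(u^*v)$ both read $\Im(u^*x_0)=0$, so the distinction you try to draw does not exist. Your claimed extremizers $\pm x_0/\|x_0\|$ do satisfy $\Im(u^*x_0)=0$, so under your chain the constrained maximum in \eqref{lam2'} would equal $1$ and the spectral gap would fail; excluding them by ``non-orthogonality to $x_0$'' appeals to the wrong constraint, since in Corollary \ref{cor5.5} the condition on the complex vector $u$ is $G(-iu)\perp G(x_0)$, i.e. $\Im(u^*x_0)=0$, not $\Re(u^*x_0)=0$. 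With the corrected extremizers $\pm i x_0/\|x_0\|$ everything falls into place: they have $\Im(u^*x_0)=\mp\|x_0\|\neq 0$, hence lie outside the constraint set, and since the constrained maximum is attained by compactness, $\lambda_2<1$ follows. Your measure-theoretic remarks about randomizing only the block $\mu_{j_0}$ and about the vanishing pixels are appropriate and match what the paper implicitly does.
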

\begin{proof}
Note that the proof of Proposition \ref{prop5.4} depends only on the fact that $A^*$ is isometric and hence holds for at least one coded diffraction pattern, oversampled or not.

Also, the uniqueness theorem, Proposition \ref{thm3},  clearly holds as long as there is at least one oversampled coded 
diffraction pattern.

Now Proposition \ref{prop5.4} says that  \eqref{iff} holds if \eqref{mag} has a unique solution  up to a real constant and Proposition \ref{thm3} says that \eqref{mag} indeed has a unique solution up to a real constant. The proof is complete. \end{proof}

We have the following corollary from Theorems \ref{thm1} and 
\ref{cor5.2}.

\begin{cor} Let the assumptions of Theorem \ref{cor5.2} be satisfied.
 Then the local, geometric convergence \eqref{23}-\eqref{24'}
holds for phase retrieval with  \eqref{1half} as the propagation matrix. 

\end{cor}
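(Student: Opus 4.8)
The plan is simply to verify that, with probability one, the hypotheses of Theorem~\ref{thm1} are met under the assumptions of Theorem~\ref{cor5.2}; the error bound \eqref{23}--\eqref{24'} is then the conclusion of Theorem~\ref{thm1} verbatim. Theorem~\ref{thm1} requires three things: that $A^*$ be an isometric $N\times n$ matrix, that $N\geq 2n$, and that the spectral gap bound \eqref{lam2} hold for $B=A\,\mathrm{diag}\{A^*x_0/|A^*x_0|\}$.

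The isometry is assumed outright, since the normalizing constant $c$ in \eqref{1half} is chosen so that $AA^*=I$. For the dimension count, recall from Section~\ref{sec:coded} that each block $\Phi\,\mathrm{diag}\{\mu_j\}$ is built from the oversampled DFT on the extended grid $\tilde\cM$, so it contributes $|\tilde\cM|\geq|\cM|=n$ rows; stacking the $\ell\geq 2$ blocks therefore gives $N\geq 2n$ (with room to spare, as one oversampled coded pattern for a rank~$\geq 2$ object already yields $N\approx 4n$).

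It remains to identify \eqref{lam2} with the conclusion of Theorem~\ref{cor5.2}. The constraint $iu\perp x_0$ in \eqref{lam2} is the same as $u\perp ix_0$ in \eqref{lam2'}, because under the real inner product $\langle iu,x_0\rangle=-\langle u,ix_0\rangle$, and the matrix $B$ in \eqref{lam2} is exactly the matrix $B=A\,\mathrm{diag}\{\om_0\}$ with $\om_0=A^*x_0/|A^*x_0|$ of Theorem~\ref{cor5.2}. Since $x_0$ is a rank~$\geq 2$ object and at least one of the masks $\mu_j$ is continuously and independently distributed on the unit circle, Theorem~\ref{cor5.2} asserts that with probability one $\lambda_2<1$, which is precisely \eqref{lam2}. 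On that probability-one event Theorem~\ref{thm1} applies and yields \eqref{23}--\eqref{24'} for every FDR sequence $y^{(k)}$ whose initial iterate gives $x^{(1)}=Ay^{(1)}$ sufficiently close to $x_0$.

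There is essentially no obstacle here: the substantive work already resides in Theorem~\ref{thm1} (the eigenstructure analysis of the FDR Jacobian $\sloc$) and in Theorem~\ref{cor5.2} (the probabilistic spectral-gap estimate built on Propositions~\ref{prop5.4} and~\ref{thm3}). The only point worth a word of care is the order of quantifiers: the spectral gap is a property of the \emph{random} propagation matrix holding on an event $\cE$ of full measure, and the \emph{deterministic} local-convergence statement of Theorem~\ref{thm1} is then invoked pointwise on $\cE$, so the corollary is an almost-sure statement over the ensemble of random masks.
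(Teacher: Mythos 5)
Your proposal is correct and follows exactly the route the paper intends: the corollary is stated as an immediate consequence of Theorems \ref{thm1} and \ref{cor5.2}, and your verification of the hypotheses (isometry, $N\geq 2n$, and the identification of the constraint $iu\perp x_0$ in \eqref{lam2} with $u\perp ix_0$ in \eqref{lam2'}) together with the almost-sure quantifier bookkeeping is precisely the implicit argument. No gaps.
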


 \section{Uniqueness of fixed point}\label{sec:fixed}

Let $\z_\infty$ be a fixed point of FDR \eqref{fdr}, i.e. 
\[
S_{\rm f}(\z_\infty)=\z_\infty,\quad x_\infty=Ay_\infty. 
\]
Let $
\om_\infty= {\z_\infty/|\z_\infty|}
$
be the phase factor of the fixed point. 
Let 
\beq
\hat x&=&\left[A\left(2b\odot \om_\infty-\z_\infty\right)\right]_\cX=\left[2 A (b\odot \om_\infty)-x_\infty\right]_\cX,
\label{14}\eeq
where $\cX$ represents the sector condition in the 1-pattern case and $\cX=\IC^n$ in the 2-pattern case.  We have from \eqref{fdr}
\beq
A^* \hat x = b\odot \om_\infty
\eeq
which implies
\beq
|A^* \hat x|&=& |A^* x_0|\label{15}\\
\measuredangle A^* \hat x &=& \measuredangle \z_\infty.\label{16}
\eeq

Eq. \eqref{15} is related to phase retrieval and eq.  \eqref{16}  magnitude
retrieval problem.

Now we state the uniqueness for Fourier phase retrieval.
\begin{prop} \label{prop6.1} \cite{unique} (Uniqueness of Fourier phase retrieval) Let the
assumptions of Theorem \ref{cor5.2} hold.    Let $x$ be a solution of  of the  phase retrieval problem \eqref{0}.   \\

\noindent {\bf 1) One-mask case $\ell=1$.} Suppose, in addition, that $\measuredangle x_0(j) \in [-\alpha\pi,\beta\pi],\,\forall j$. Then   $x=e^{i\theta} x_0$ for some constant $\theta\in \IR$ with a high probability which has a simple, lower bound 
\beq
\label{prob}
1-n\lt|{\beta+\alpha\over 2}\rt|^{\llfloor S/2\rrfloor} 
\eeq
if  $\mu$ is  uniformly distributed on the unit circle, where $S$ is the sparsity of the image and $\llfloor S/2\rrfloor$ the greatest integer less than or equal to $S/2$. \\

\noindent  {\bf 2) Two-mask case $\ell=2$.} Suppose, in addition, that both masks are
independently and continuously distributed on the unit circle and independent from each other. 
 Then   $x=e^{i\theta} x_0$ for some constant $\theta\in \IR$ with  probability one. 
\end{prop}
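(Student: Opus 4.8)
The plan is to reduce the claim to the classical structure theorem for multidimensional Fourier phase retrieval and then verify that the random mask(s) destroy every non-trivial ambiguity; this follows the argument of \cite{unique}, which I outline. Since $A^*$ in \eqref{one} (resp.\ \eqref{two}) is the \emph{oversampled} DFT composed with the diagonal mask(s), the equation \eqref{0}, which reads $|A^*x|=|A^*x_0|$ with $x\in\cX$, is equivalent --- by the standard oversampling identity between a diffraction pattern and its autocorrelation --- to the masked objects $g:=\mu\odot x$, $g_0:=\mu\odot x_0$ (resp.\ $g_i:=\mu_i\odot x$, $g_{0,i}:=\mu_i\odot x_0$, $i=1,2$) having identical autocorrelations on $\widetilde\cM$. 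Because $x_0$ is rank $\geq 2$, $\supp g_0=\supp x_0$ has a convex hull of dimension $\geq 2$, so we are in the genuinely multivariate regime in which, for an array with irreducible $z$-transform, the Fourier intensity determines the array up to the three trivial ambiguities: a global phase, a lattice translation, and the conjugate inversion $g_0(\bn)\mapsto \overline{g_0(\mathbf c-\bn)}$ about some center $\mathbf c$.

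Next I would record that, for fixed $x_0$, the randomized array $\mu\odot x_0$ has an irreducible $z$-transform with probability one when $\mu$ is continuously distributed, and with a controlled exceptional probability for the uniform discrete mask, so that the structure theorem applies; the reducible case only adds further ``partial'' inversions of the individual irreducible factors, which are annihilated by exactly the mechanism below. Among the three trivial ambiguities, the global-phase case $g=e^{i\theta}g_0$ yields $\mu\odot x=e^{i\theta}\mu\odot x_0$, hence $x=e^{i\theta}x_0$ since $\mu$ never vanishes; that is precisely the assertion, so it remains only to exclude translation and conjugate inversion.

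For the one-mask case I would use the sector constraint. If $g(\bn)=e^{i\theta}g_0(\bn-\mathbf m)$ with $\mathbf m\neq 0$, or $\mu(\bn)x(\bn)=e^{i\theta}\overline{\mu(\mathbf c-\bn)}\,\overline{x_0(\mathbf c-\bn)}$, then solving for $x$ and imposing $\arg x(\bn)\in[-\alpha\pi,\beta\pi]$ gives, after taking arguments and using that both $\arg x(\bn)$ and $\arg x_0(\cdot)$ lie in $[-\alpha\pi,\beta\pi]$, a constraint of the form ``$\phi(\bn-\mathbf m)-\phi(\bn)$ (resp.\ $\phi(\bn)+\phi(\mathbf c-\bn)$) lies in a prescribed arc of length $(\alpha+\beta)\pi$'', once the single free constant $\theta$ is pinned down by one pixel. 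Partitioning the support into $\lfloor S/2\rfloor$ pairs of pixels on which disjoint mask entries appear, the corresponding sums or differences of the i.i.d.\ phases $\phi$ are independent and each uniform on the circle (a sum or difference of two independent circular uniforms is again circular uniform), so each pair contributes an independent factor $|(\alpha+\beta)/2|$; a union bound over the admissible shifts and centers then produces the stated exceptional probability $n|(\beta+\alpha)/2|^{\lfloor S/2\rfloor}$.

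For the two-mask case the second independent mask plays the role of the sector constraint. From $g_1$ one obtains that $x$ is either $e^{i\theta_1}x_0$ (done) or a genuine translation/inversion expression in $\mu_1$ and $x_0$; substituting that expression into the requirement from $g_2$ --- that $\mu_2\odot x$ again be a trivial ambiguity of $\mu_2\odot x_0$ --- yields an algebraic identity that either involves only $\mu_1$ and $x_0$ (when the $g_2$-ambiguity is a global phase) or involves $\mu_1$ and $\mu_2$ evaluated at pairs of distinct pixels (otherwise). In every case the identity forces the independent, continuously distributed mask values into a set of lower dimension over a positive number of pixels, hence holds with probability zero; intersecting over the finitely many shifts and centers gives $x=e^{i\theta}x_0$ almost surely. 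The delicate step, I expect, is the passage to the structure theorem --- establishing a.s.\ irreducibility of $\mu\odot x_0$ and, in the discrete-mask case, keeping the reducible and translational contributions inside the clean bound; once that is in hand, the probability estimate is an elementary pairing-and-union-bound computation.
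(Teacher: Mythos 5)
Your outline is essentially the argument of \cite{unique}, which is precisely where the paper defers the proof of Proposition \ref{prop6.1} (no internal proof is given beyond the citation), and it parallels the $z$-transform, almost-sure-irreducibility and trivial-ambiguity machinery that the paper itself develops in Appendix A for the companion magnitude-retrieval result (Propositions \ref{thm:MR}--\ref{NonCS}). The reduction to global phase, translation and conjugate inversion, the exclusion of the latter two via the sector constraint with the pairing/union-bound estimate giving \eqref{prob}, and the probability-zero argument with a second independent mask all match the cited proof, so the approach is correct and not genuinely different.
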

The proof of Proposition \ref{prop6.1} is given in \cite{unique} where
more general uniqueness theorems can be found, including the $1\half$-mask case. 

\begin{thm}\label{thm4} (Uniqueness of fixed point)
Under the set-up of Proposition \ref{prop6.1}, the following statements hold for the phase retrieval problem \eqref{0}.\\

{\bf 1) One-mask case.} With probability at least given in  \eqref{prob}, $\hat x =e^{i\theta}x_0$ for some $\theta\in \IR$. \\

{\bf 2) Two-mask case.} Almost surely  $\hat x=x_\infty=e^{i\theta}x_0$ for some constant $\theta\in \IR$. 

\end{thm}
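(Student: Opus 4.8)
The plan is to combine the magnitude-retrieval uniqueness already in hand (Proposition \ref{thm3}, and its multi-mask extension used in Theorem \ref{cor5.2}) with the phase-retrieval uniqueness (Proposition \ref{prop6.1}) to pin down both $\hat x$ and $x_\infty$. Starting from a fixed point $\z_\infty$ of FDR \eqref{fdr}, equations \eqref{15} and \eqref{16} say that $\hat x$ defined by \eqref{14} simultaneously satisfies $|A^*\hat x|=|A^*x_0|=b$ and $\measuredangle A^*\hat x=\measuredangle \z_\infty$. The first relation means $\hat x$ is a \emph{bona fide} solution of the phase retrieval problem \eqref{0} (note that $\hat x\in\cX$ by construction, since it is the output of $[\cdot]_\cX$), so Proposition \ref{prop6.1} applies directly: in the one-mask case, with the sector hypothesis on $x_0$, we get $\hat x=e^{i\theta}x_0$ with probability at least \eqref{prob}, and in the two-mask case we get $\hat x=e^{i\theta}x_0$ almost surely. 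This already proves the assertion about $\hat x$ in both cases.

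It remains, in the two-mask case, to upgrade this to $x_\infty=\hat x=e^{i\theta}x_0$. Here I would feed the conclusion $\hat x=e^{i\theta}x_0$ back into \eqref{16}: since $A^*\hat x=e^{i\theta}A^*x_0$, its angle is $\measuredangle A^*x_0$ (shifted by the constant $\theta$), so \eqref{16} gives $\measuredangle \z_\infty=\measuredangle A^*x_0$ up to the global constant, i.e. $\om_\infty=e^{i\theta}\om_0$ where $\om_0=A^*x_0/|A^*x_0|$. Now I would return to the fixed-point equation itself. Writing out $S_{\rm f}(\z_\infty)=\z_\infty$ with $\cX=\IC^n$ (the two-pattern case) and using $\z_\infty=|\z_\infty|\odot\om_\infty$, the defining identity $\z_\infty+A^*A(2b\odot\om_\infty-\z_\infty)-b\odot\om_\infty=\z_\infty$ collapses to $A^*A(2b\odot\om_\infty-\z_\infty)=b\odot\om_\infty$, equivalently $A^*\hat x=b\odot\om_\infty$. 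Combined with $A^*\hat x=e^{i\theta}A^*x_0=e^{i\theta}b\odot\om_0$ and $b\odot\om_\infty=e^{i\theta}b\odot\om_0$, this gives $A^*\hat x=b\odot\om_\infty$ consistently, but to conclude $x_\infty=\hat x$ I need that $\z_\infty$ lies in the range of $A^*$. That holds because $\z_\infty=A^*A\z_\infty+(\z_\infty-A^*A\z_\infty)$ and, from the fixed-point relation, $(I-A^*A)\z_\infty=(I-A^*A)(b\odot\om_\infty)$; applying $A$ to $\z_\infty$ and using $AA^*=I$ one recovers $x_\infty=A\z_\infty$ and then $\z_\infty=A^*x_\infty$ precisely when $\z_\infty\in\mathrm{range}(A^*)$, which I would verify is forced by the fixed-point equation together with $\z_\infty=b\odot\om_\infty$ on the support where $b\neq 0$. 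Chasing these projections carefully yields $x_\infty=\hat x=e^{i\theta}x_0$.

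The main obstacle I anticipate is not the phase-retrieval input — that is quoted wholesale from \cite{unique} — but the bookkeeping in the last step: disentangling $\z_\infty$ from $x_\infty=A\z_\infty$ and confirming that a fixed point of FDR necessarily has $\z_\infty=A^*x_\infty$ (i.e. $\z_\infty$ has no component in $\ker A$), and handling the pixels where $b(j)=0$ so that $\om_\infty$ is well defined there. One clean way to sidestep part of this is to observe that at a fixed point, $P_2\z_\infty=b\odot\om_\infty$ and $P_1 P_2\z_\infty$-type relations force $A\z_\infty=[2A(b\odot\om_\infty)-x_\infty]_\cX=\hat x$ directly from \eqref{fdr} after multiplying by $A$ and using $AA^*=I$; then $x_\infty=A\z_\infty=\hat x$ is immediate, and the two-mask conclusion follows from the $\hat x$ statement. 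I would present it in that order: first derive $x_\infty=\hat x$ from the fixed-point identity and $AA^*=I$, then invoke Proposition \ref{prop6.1} once.
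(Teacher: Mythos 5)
Your final version (the order you propose in the last paragraph) is correct and follows the same skeleton as the paper: define $\hat x$ by \eqref{14}, read off $A^*\hat x=b\odot \om_\infty$ from the fixed-point identity, use \eqref{15} to invoke Proposition \ref{prop6.1} for $\hat x=e^{i\theta}x_0$, and then upgrade to $x_\infty=\hat x$ only in the two-mask case. The one genuine difference is in that last step: the paper feeds $\hat x=e^{i\theta}x_0$ into \eqref{16} to get $\om_\infty=e^{i\theta}\om_0$ and substitutes this back into \eqref{14}, whereas you get $x_\infty=\hat x$ purely algebraically --- from $A^*\hat x=b\odot\om_\infty$ and $AA^*=I$ one has $\hat x=A(b\odot\om_\infty)$, and since $\cX=\IC^n$ makes the projection in \eqref{14} the identity, $\hat x=2A(b\odot\om_\infty)-x_\infty$ forces $x_\infty=\hat x$. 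This is valid, does not use \eqref{16} at all, and also makes transparent why case 1 only yields the weaker statement: with a nontrivial sector projection the relation $\hat x=[2A(b\odot\om_\infty)-x_\infty]_\cX$ cannot be solved for $x_\infty$ (the paper makes the same remark after its proof). However, you should delete the middle paragraph's detour: the conclusion $x_\infty=\hat x$ never requires $\z_\infty\in\mathrm{range}(A^*)$, and the suggestion that the fixed-point equation forces $\z_\infty=b\odot\om_\infty$ (i.e.\ $|\z_\infty|=b$) is false --- a DR fixed point generally lies in neither constraint set, and its $\ker A$-component is genuinely nonzero (cf.\ the fixed-point subspace $E_1$ in Corollary \ref{cor5.8}); fortunately nothing in your final argument depends on this.
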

\begin{rmk}\label{rmk6.3}
With a slightly stronger assumption (\cite{unique}, Theorem 6),  the two-mask
case in Proposition \ref{prop6.1} and hence Theorem \ref{thm4} hold for the
$1\half$-mask case \eqref{two'}.
\end{rmk}

%\subsection{Proof of uniqueness}
\begin{proof}
By Proposition \ref{prop6.1} (\ref{15}) implies that
$\hat x=e^{i\theta}x_0$ for some constant $\theta\in [0,2\pi]$, with
the only difference between case 1) and case 2) being the probability with which this statement holds. To complete the proof, we only need to consider case 2) and show $\hat x=x_\infty$.\\

By  \eqref{14}  $\hat x\in \cX$  and by Proposition \ref{prop6.1}  (\ref{15}) implies that with probability no less
than \eqref{prob}  $\hat x=e^{i\theta} x_0$ for some constant $\theta\in\IR$. Hence, by \eqref{16}, we have
\beq
\label{100}
e^{i\theta}\om_0=\om_\infty.
\eeq
Substituting \eqref{100} into \eqref{14} we obtain
\[
2e^{i\theta} x_0=\hat x+x_\infty=e^{i\theta} x_0+x_\infty
\]
and hence $e^{i\theta} x_0=x_\infty$. In other words, 
\[
x_\infty=\hat x=e^{i\theta}x_0. 
\]
\end{proof}

On the other hand, 
for case 1),   $\hat x=e^{i\theta}x_0$ and  \eqref{16} imply 
$e^{i\theta} \om_0=\om_\infty$ and hence
\[
e^{i\theta} y_0=e^{i\theta} b\odot \om_0=b\odot \om_\infty.
\]
 This and \eqref{14} only lead to 
 \[
 \hat x=[2e^{i\theta}x_0-x_\infty]_\cX=[2\hat x-x_\infty]_\cX.
 \] \\
%Unfortunately,  the same argument does not hold for  ODR with $\tilde n <N$ (e.g. the 2-mask case). 

\section{Numerical simulations} \label{sec:num}

\subsection{Test images}

 \begin{figure}[tbp]
\begin{center}
\includegraphics[width=5cm]{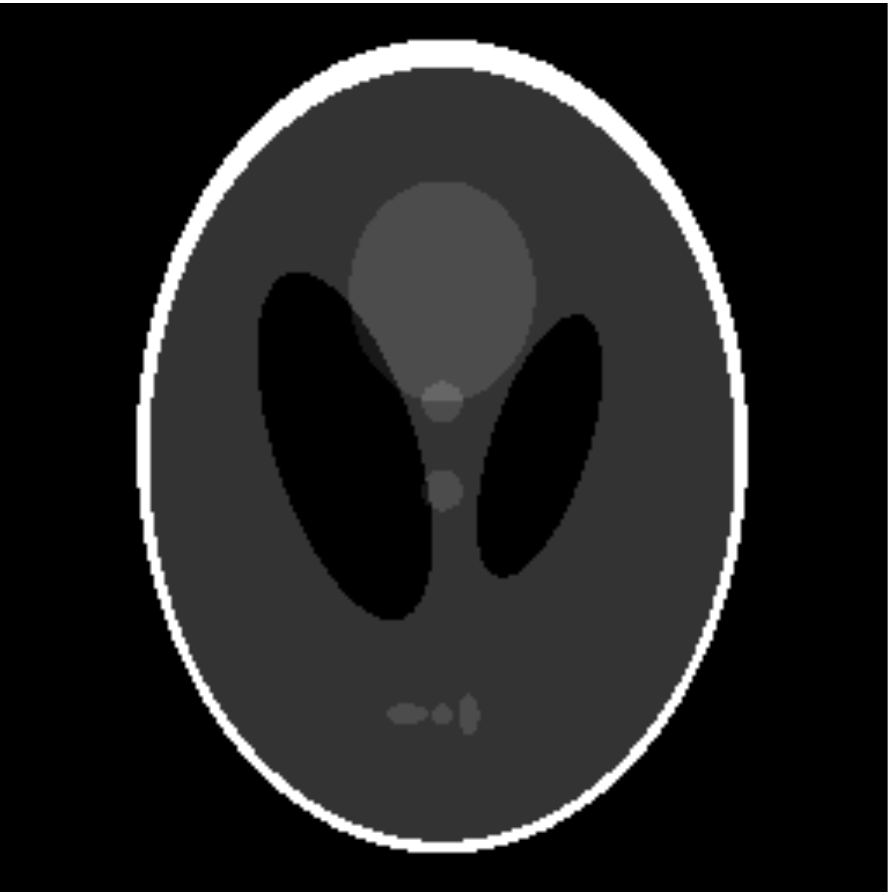}\quad
\includegraphics[width=5cm]{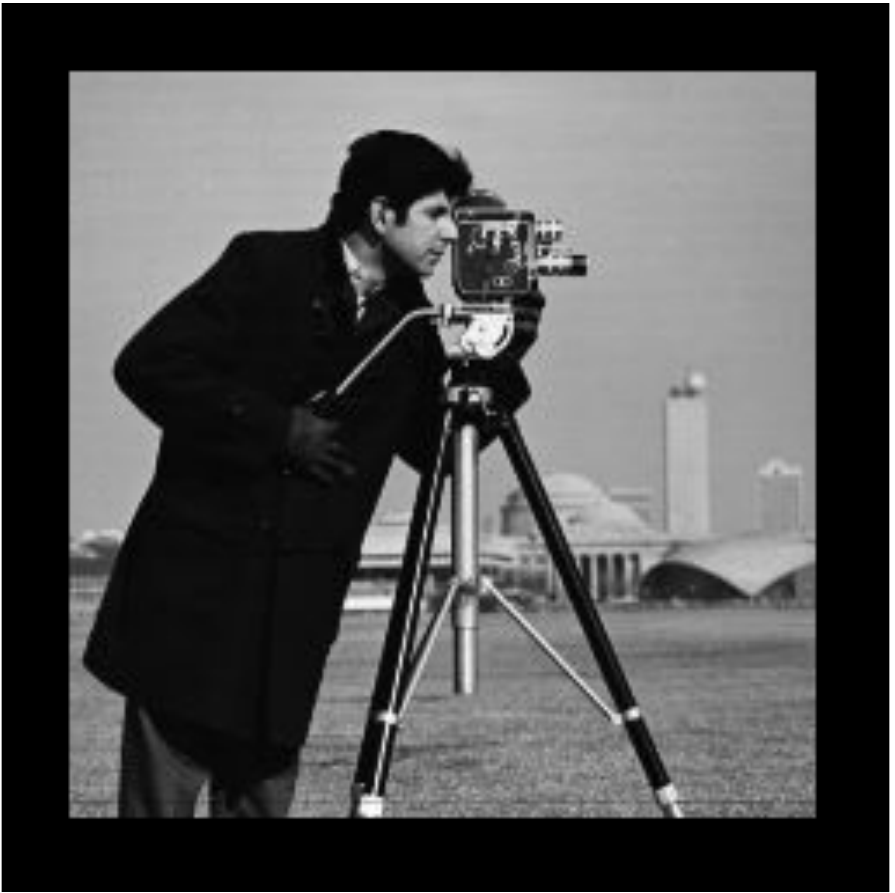}
\includegraphics[width=5cm]{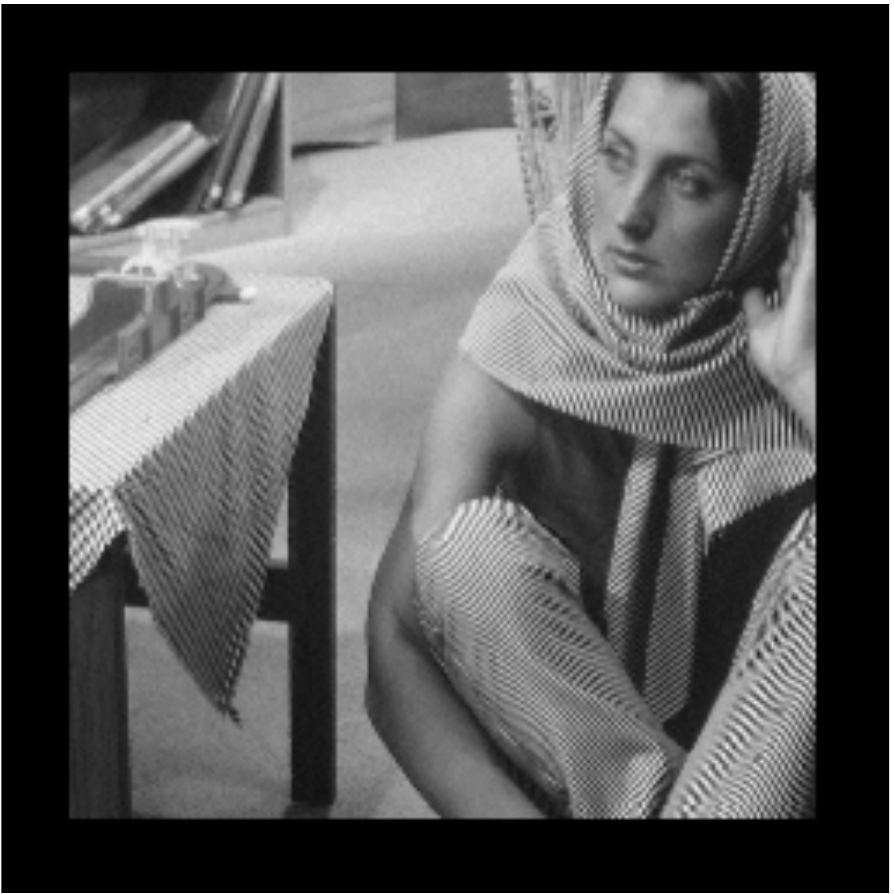}

\caption{The original phantom without phase randomization  (left), the truncated cameraman (middle) and the truncated Barbara (right).
 %Bottom shows the test image: Phantom images with random phase illumination in case (a) and case (b), see  the left and right two  images.
%,b2,b3,b4) and (c1,c2,c3,c4).
}
\label{HIOimagesAC}
\label{fig2}
\end{center}
\end{figure}

For test images $x_0$ we consider the Randomly Phased Phantom  (RPP) Fig. \ref{fig2} (left) and the deterministic image,
hereby called the Truncated Cameraman-Barbara (TCB), whose real part is the truncated cameraman,  Fig. \ref{fig2} (middle) and whose imaginary part is the truncated Barbara, Fig. \ref{fig2} (right). The purpose of  truncation is to
create an unknown,  loose support (dark margins) which makes the image more difficult to recover. RPP has a loose support without additional  truncation. Likewise, we randomize
the original phantom in order to make its reconstruction more challenging. In general, a random object such as RPP is more difficult to recover than a deterministic object such as
TCB (see, e.g. Fig. \ref{fig:two} and \ref{fig:four}). The  size $n$ of both images is $256 \times 256$, including the margins. 

The propagation matrix is primarily based on either \eqref{one} or \eqref{two'} unless specified
otherwise. 

\subsection{Local convergence rate}
First we simulate the local convergence rate of the $1\half$-mask case  and compare them with $\lambda_2$.

 The initial condition $x^{(1)}$ is chosen sufficiently close to the true object $x_0$, which is a unit vector. Fig. \ref{fig:rate}  shows the error $\|\alpha^{(k)}[x^{(k)}]_n-x_0\|$  on the log scale versus  the iteration counter in the case of two oversampled diffraction patterns. 
The  oscillation in the blue curve (FDR) is due to  the complex eigenvalues of $\sloc$. 
 The magenta line shows the geometric sequence $\{\lambda_2^k\}_{k=1}^{100}$. The $\lambda_2$ value is computed via the power method, $\lambda_2=0.9505$ for TCB and $\lambda_2=0.9533$ for RPP. Note that the FDR curve (red) decays slightly faster
 than the $\lambda_2$-curve (magenta), which decays still faster than the black curve (ODR with $\tilde n\approx 4n$).
% and the grey line represents
%the geometric series $\{\gamma^k\}_{k=1}^{150}$  where $\gamma=0.9819> \lambda_2$ is the modulus  of the second greatest eigenvalue of $J$. 

   \begin{figure}[tbp]
\begin{center}
\subfigure[TCB]{
\includegraphics[width=0.4\textwidth]{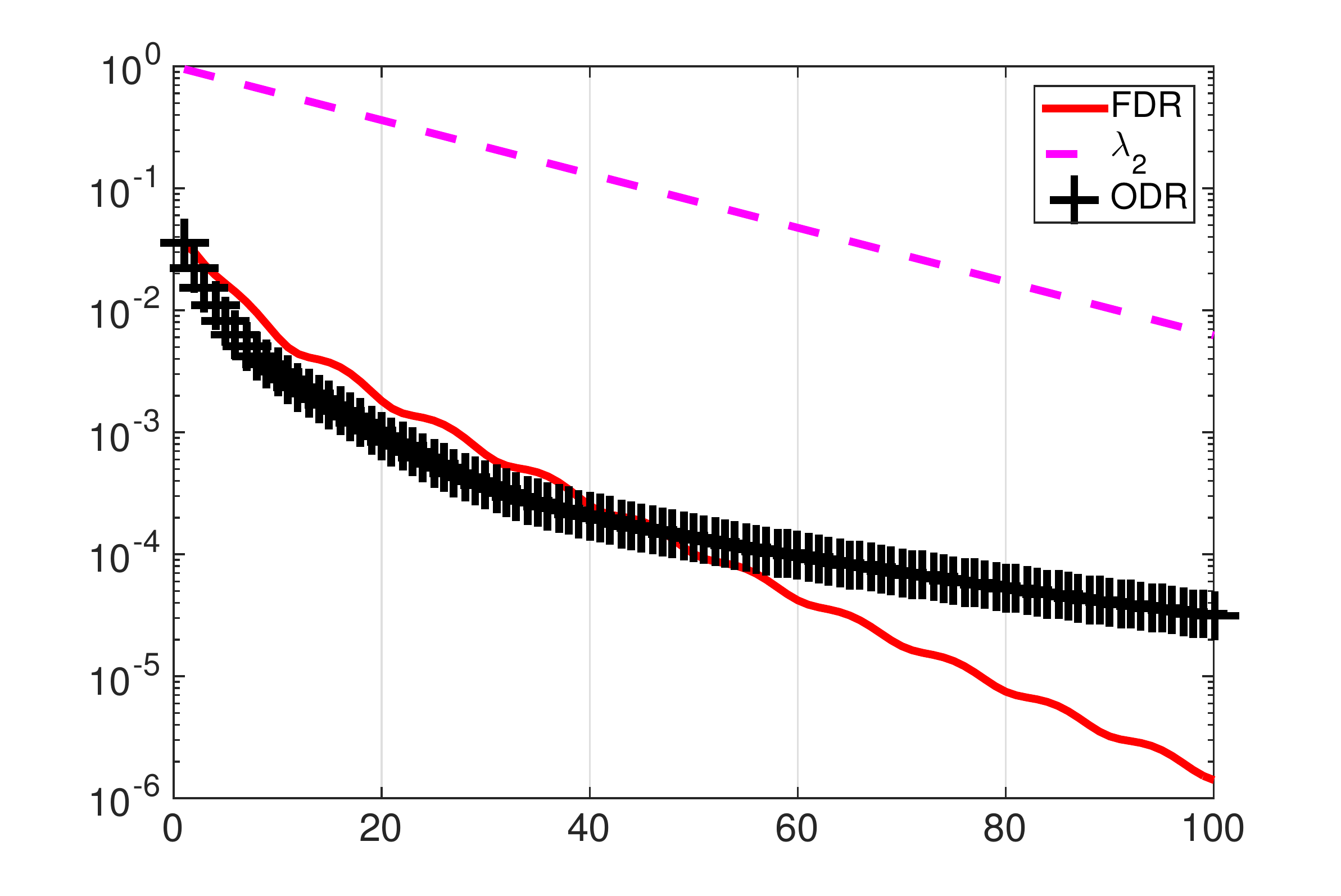}
}
\subfigure[RPP]{
\includegraphics[width=0.4\textwidth]{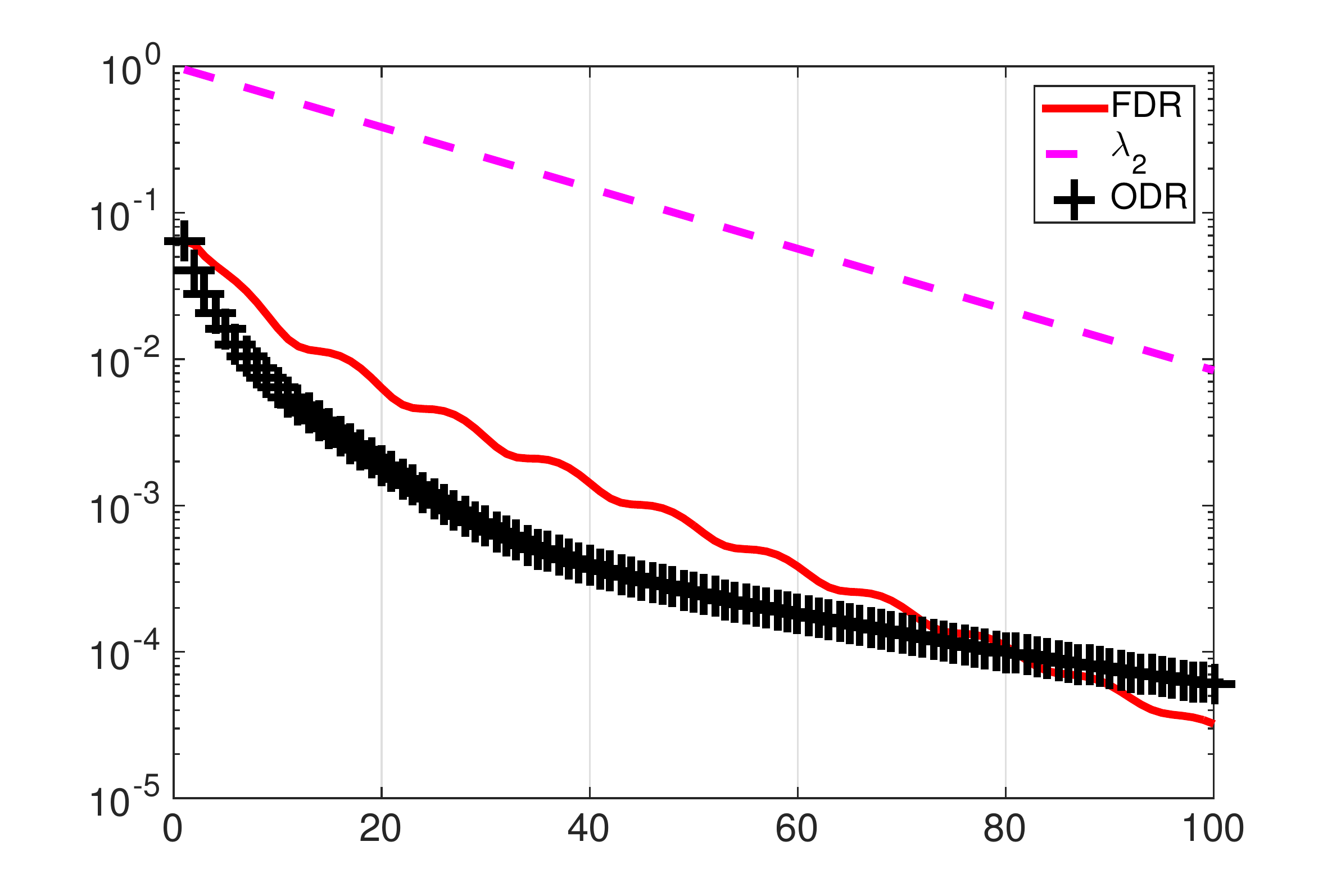}
}
\caption{Error (on the log-scale) versus iteration  for TCB (left) and RPP (right).   
}
\label{fig:rate}
\end{center}
\end{figure}

\commentout{  \begin{figure}[tbp]
\begin{center}
\includegraphics[width=5cm]{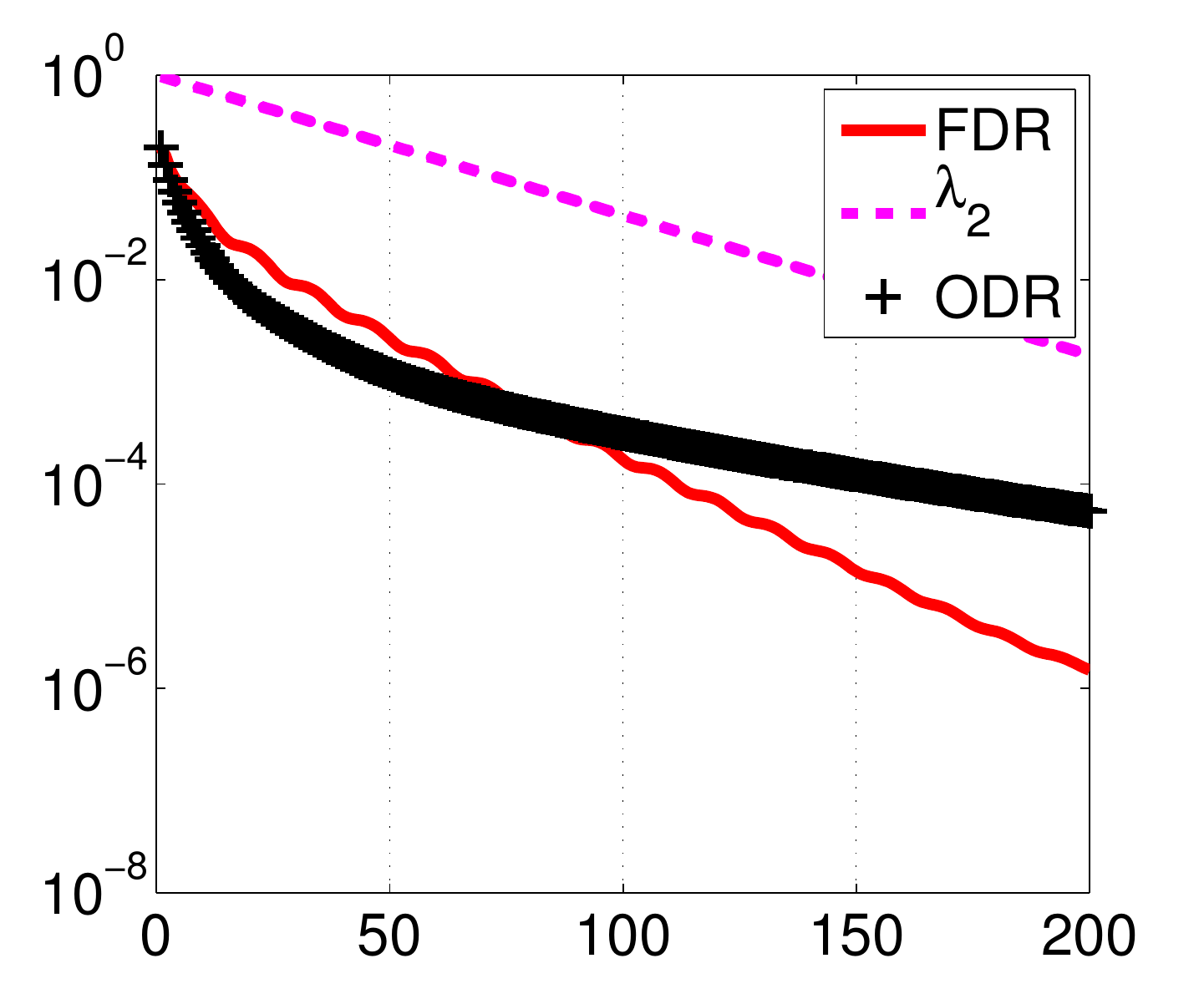}\quad 
\includegraphics[width=5cm]{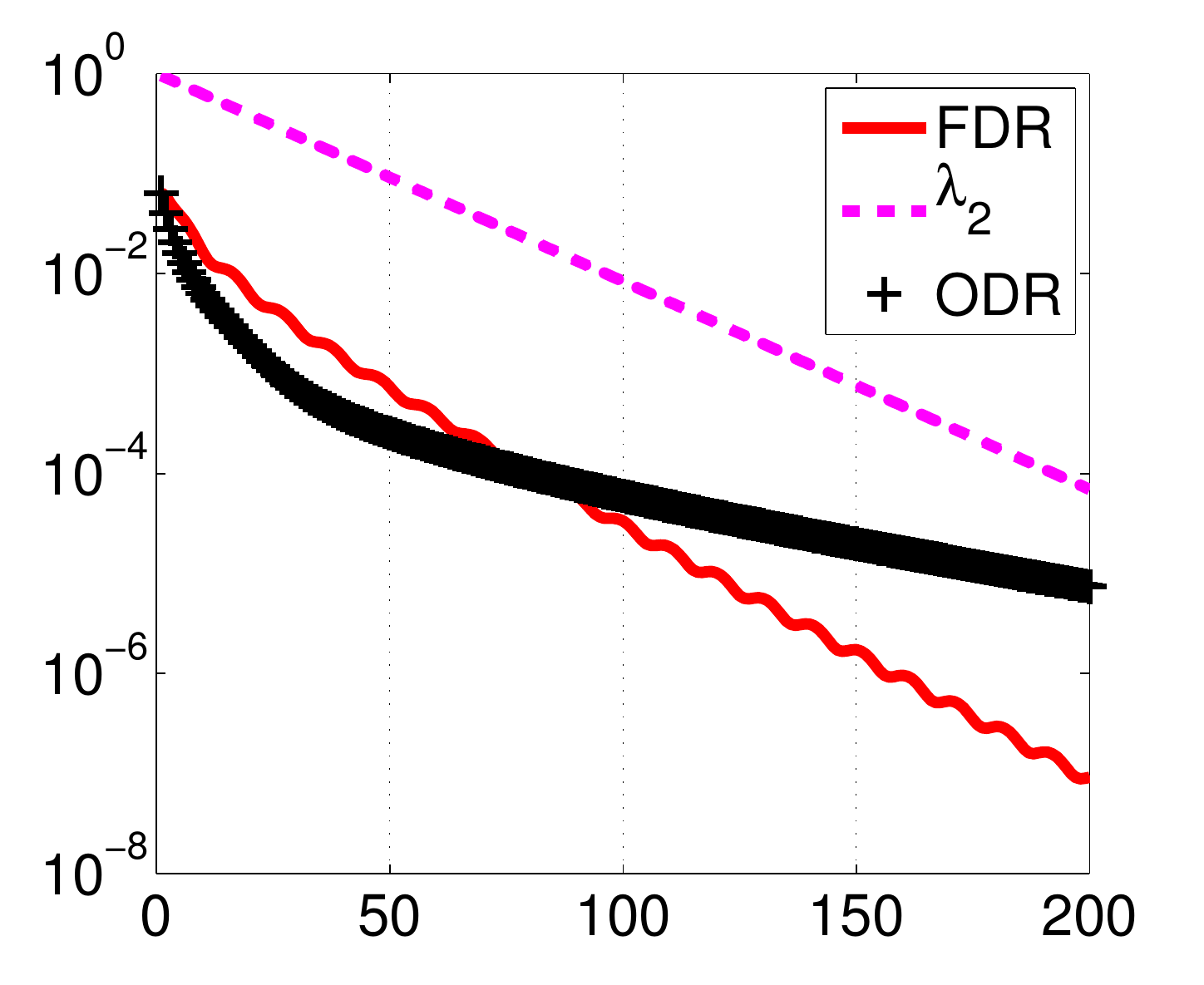}
\caption{
Error (on the log-scale) versus iteration  for TCB (left) and RPP (right). 
}
\label{fig:rate}
\end{center}
\end{figure}
}

\commentout{
\subsubsection{The Gaussian case}

Let $A^*\in \IC^{80\times 20}$ be an isometric matrix obtained by QR decomposition of a $80 \times 20$ Gaussian random  matrix.

 %The green line shows the geometric sequence $\{\lambda_2^k\}_{k=1}^{200}$ where $\lambda_2=0.9545$ is numerically computed. Observe that both blue and green curves decays at the  same rate consistent with the prediction of Theorem \ref{thm1}.
 
The black curve shows the error for  ODR with $\tilde n=2n$ and the red line represents
the geometric series $\{\gamma^k\}_{k=1}^{200}$  where $\gamma=0.9724>\lambda_2=0.9545$.

  \begin{figure}
\begin{center}
\includegraphics[width=6cm]{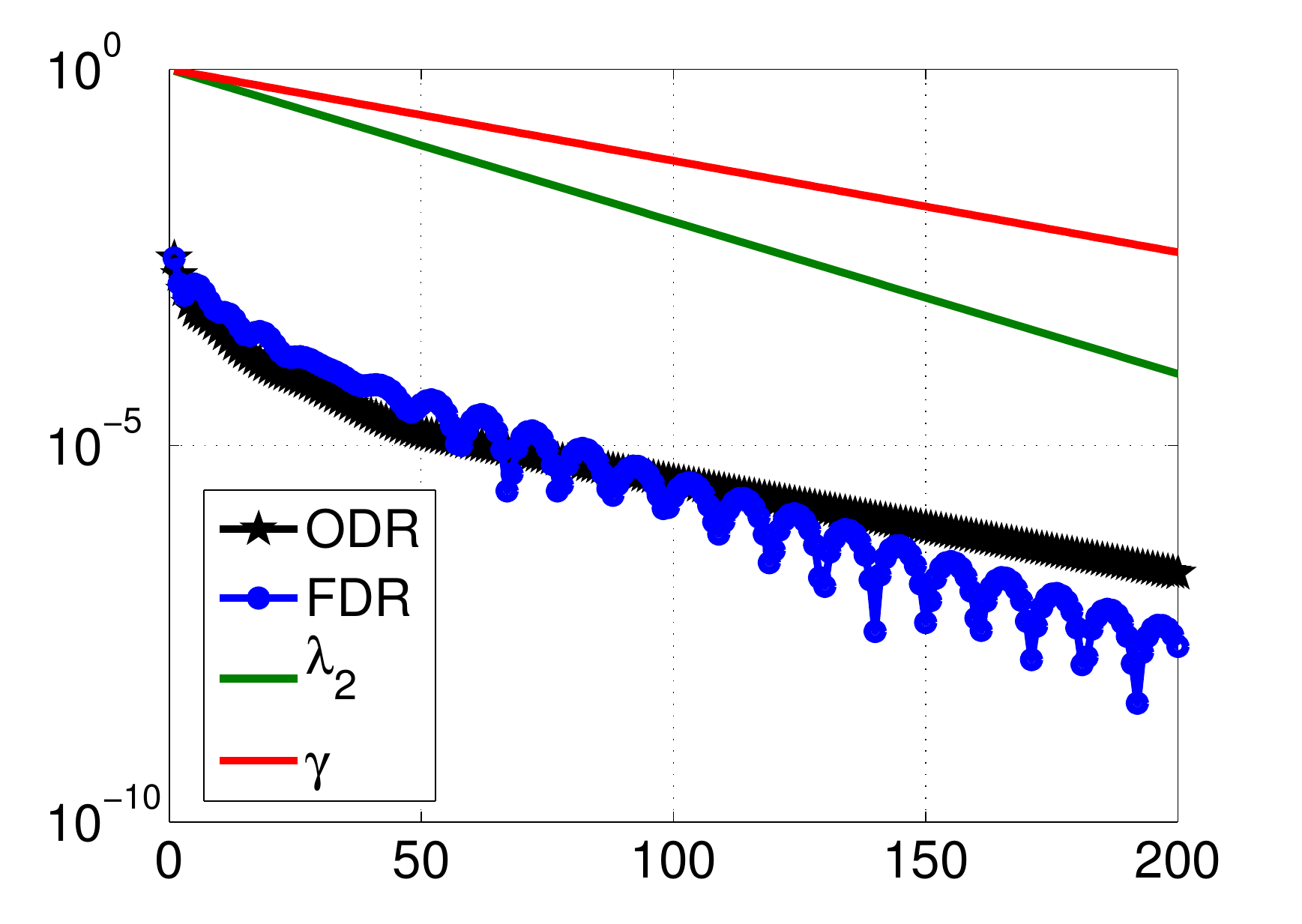}
\caption{
Convergence rates of FDR and ODR. 
}
\label{LinearRate}
\end{center}
\end{figure}
}

\subsection{Initiailization}
For {\em global } convergence behaviors, we test two different initializations: 
the Random Initialization (RI), where each pixel value is selected randomly and 
independently,   and the Constant Initialization (CI), where each pixel value is set to unity.

 The relative  error of the estimate $\hat x$ with
the optimal phase adjustment   
is given by 
\begin{equation}
\frac{\|e^{i\hat\theta} \hat x-x_0\|}{\|x_0\|},\quad \hat \theta=\hbox{\rm arg}\min_{\theta\in\IR} {\|e^{i\theta}  \hat x-x_0\|}.
\end{equation}

\subsection{One-pattern case}

 \begin{figure}[t]
\begin{center}
%\commentout{
\subfigure[RPP + RI]
{
\includegraphics[width=4.2cm]{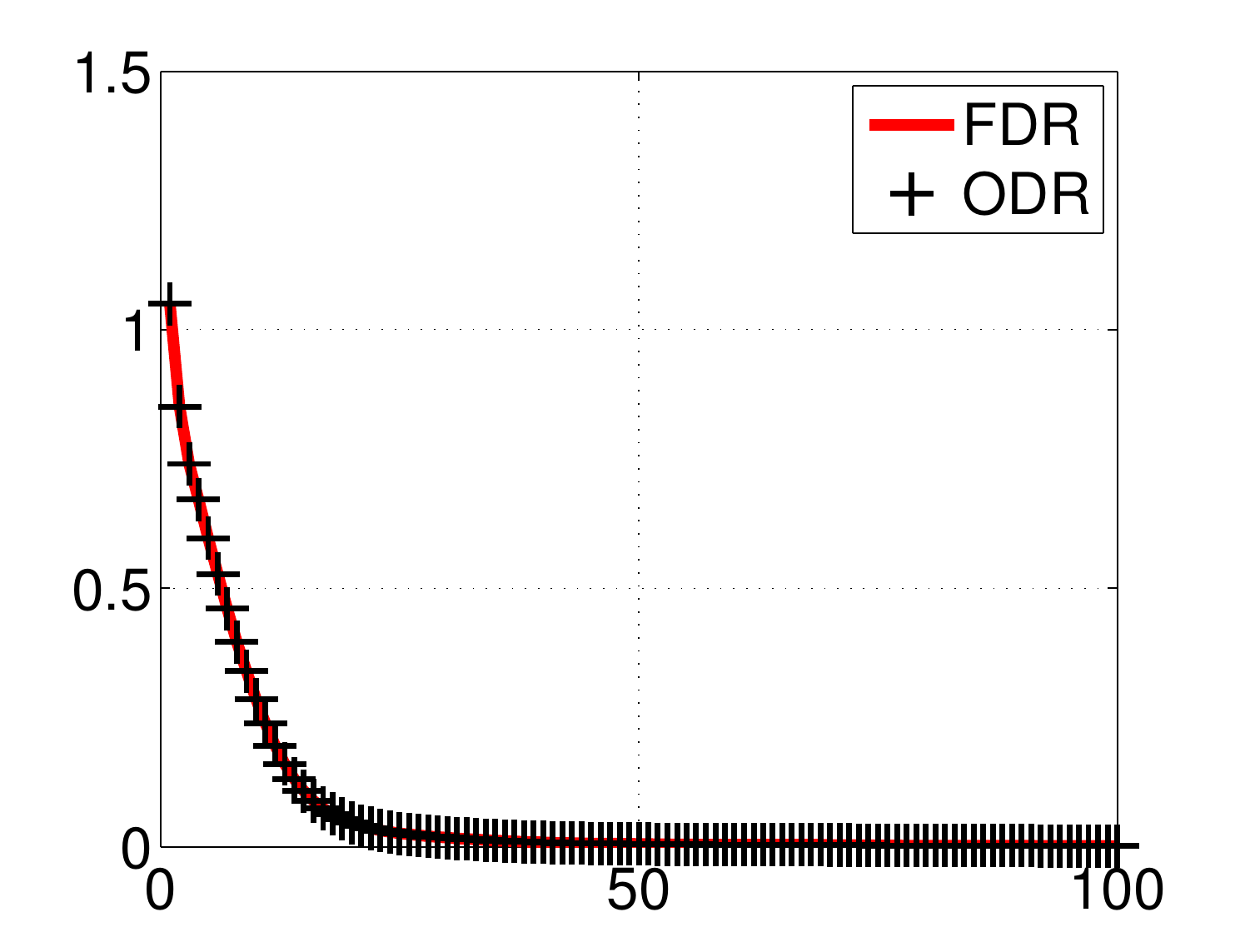}
}\hspace{-0.6cm}
\subfigure[RPP + CI ]
{
\includegraphics[width=4.2cm]{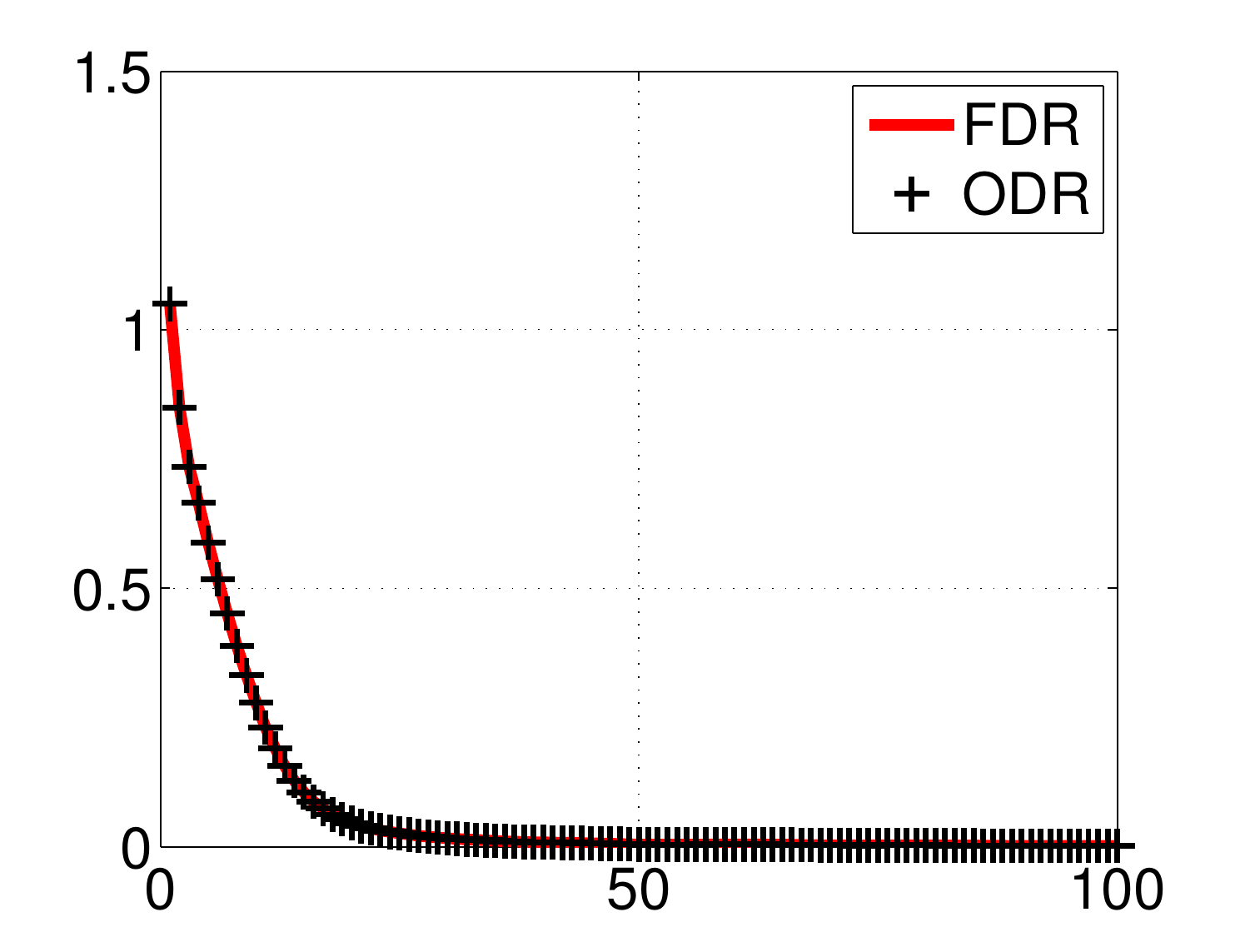}
}\hspace{-0.6cm}\subfigure[TCB + RI ]
{
\includegraphics[width=4.4cm]{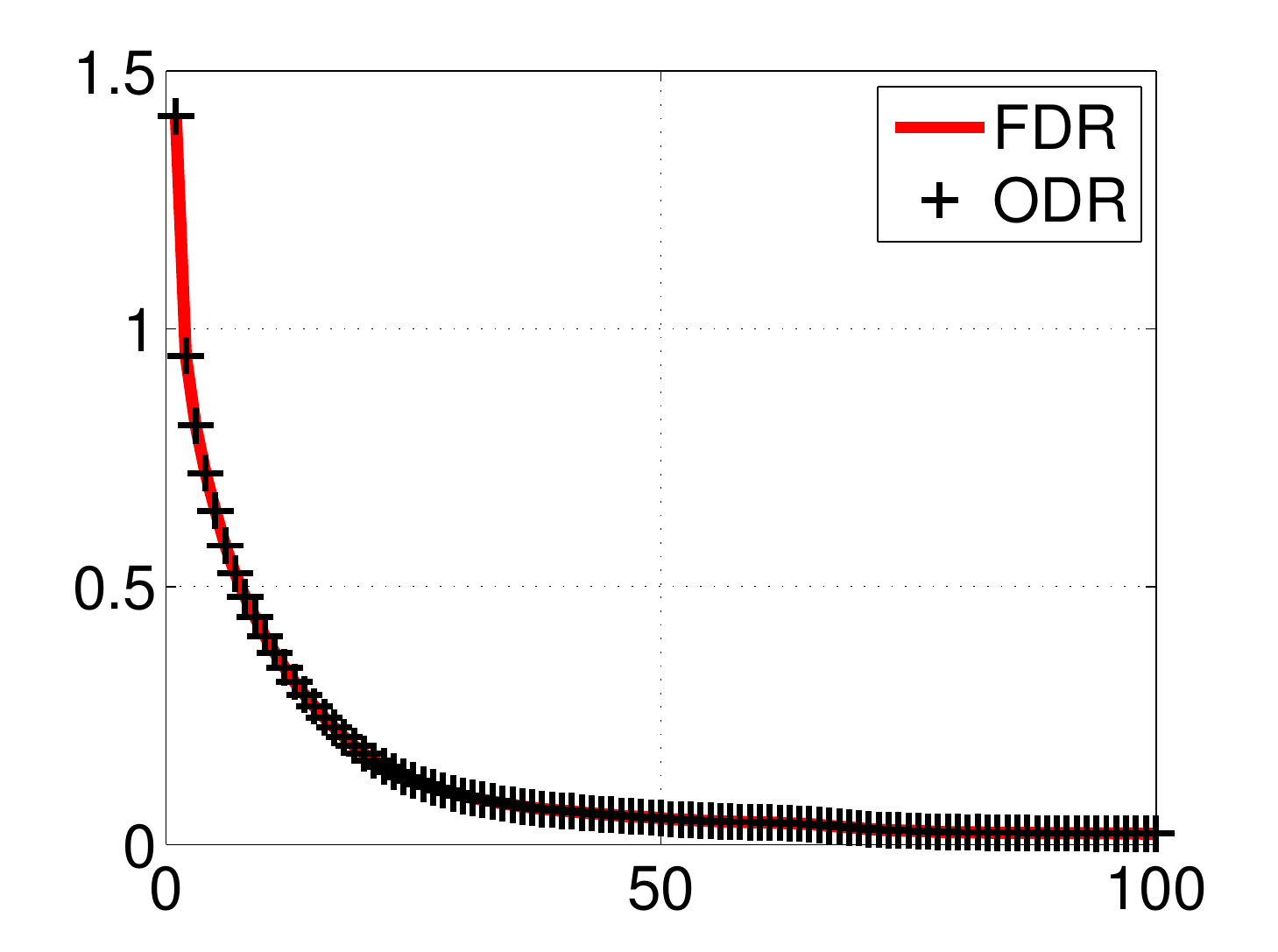}
}\hspace{-0.6cm}
\subfigure[TCB + CI]
{
\includegraphics[width=4.1cm]{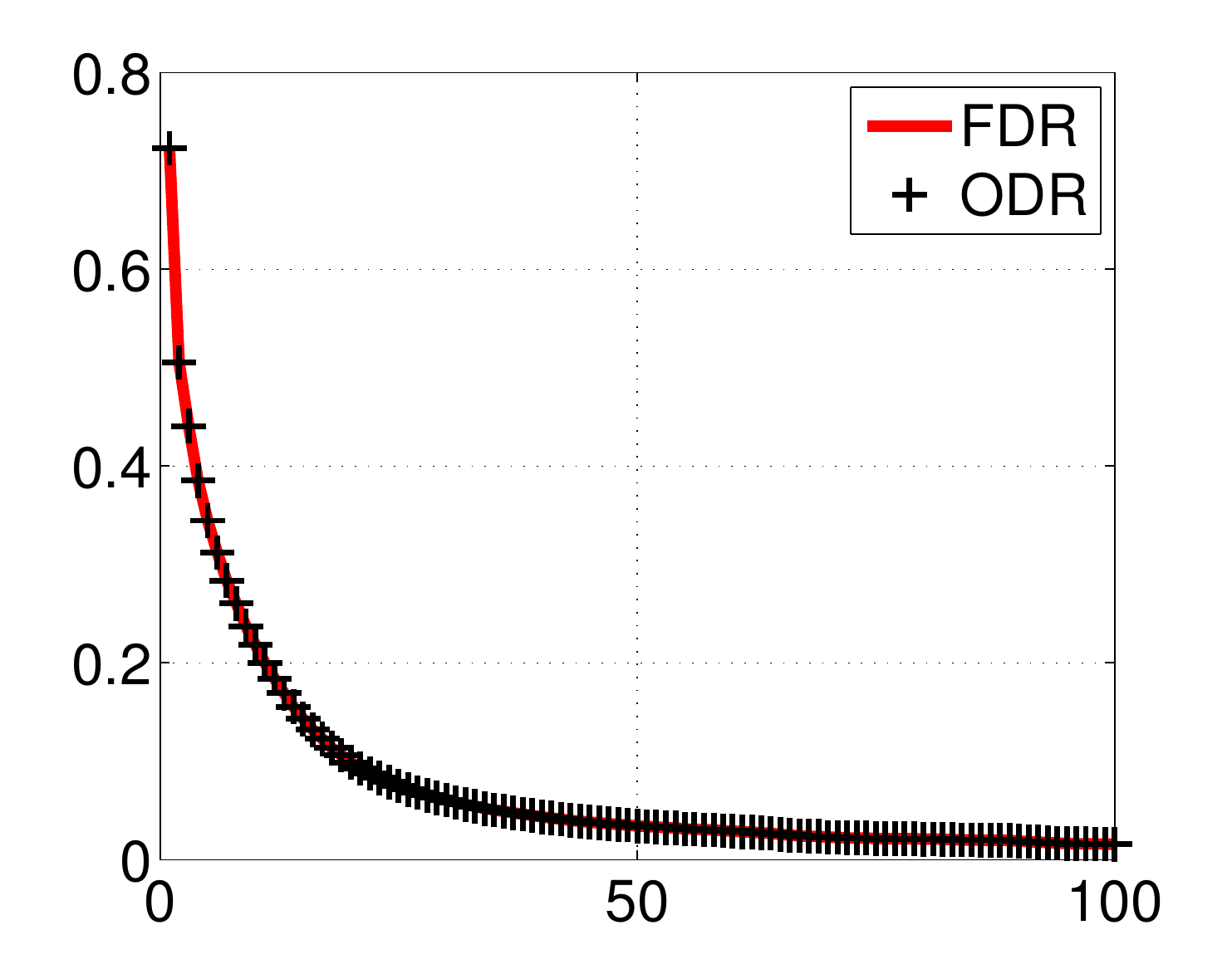}
}
%\caption{Relative error  versus iteration in the 1-mask case (with the sector condition).
%}
%}
%\end{center}
%\end{figure}

\subfigure[RPP + RI]
{
\includegraphics[width=4.2cm]{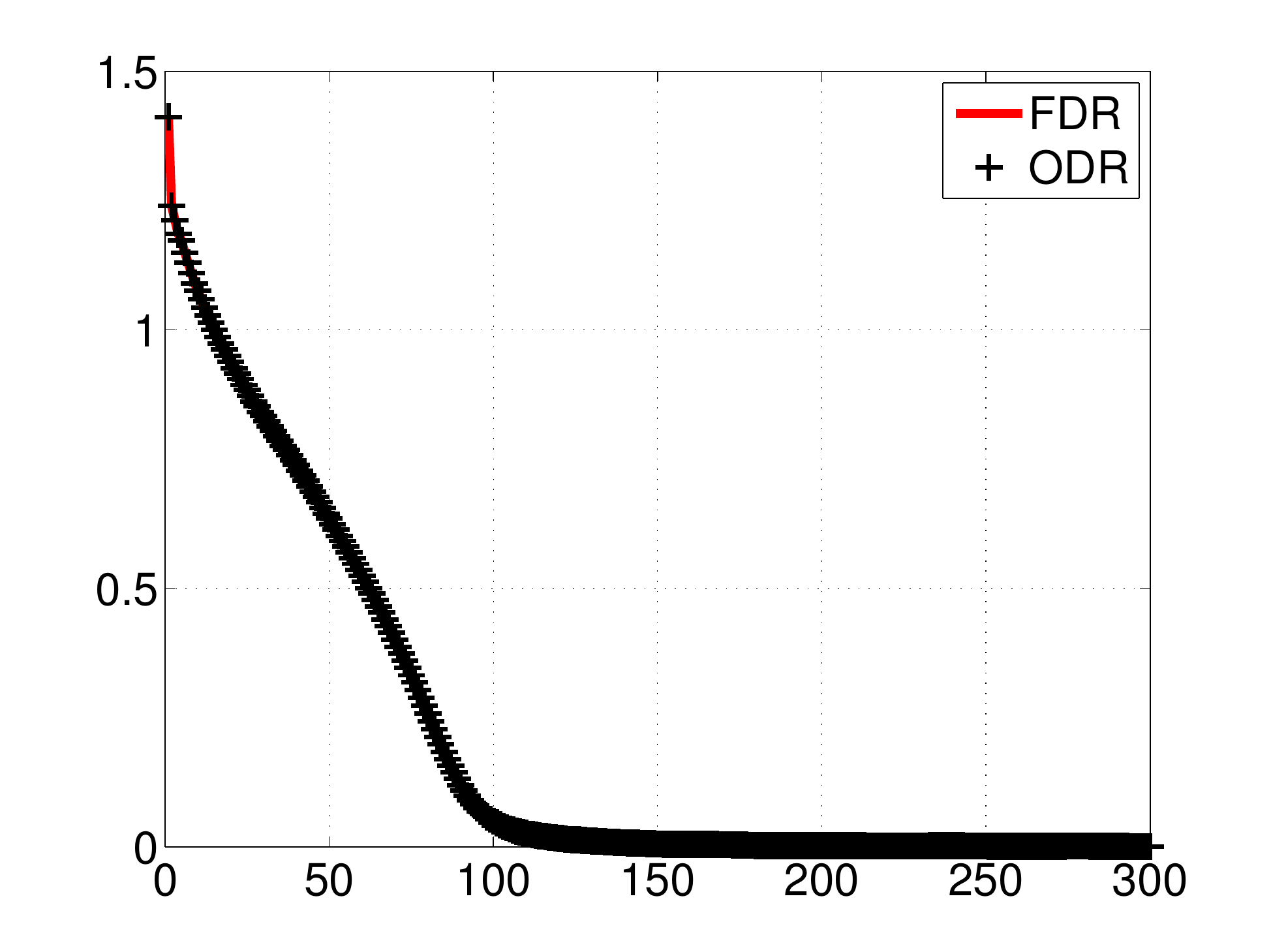}
}\hspace{-0.6cm}
\subfigure[RPP + CI]
{
\includegraphics[width=4.2cm]{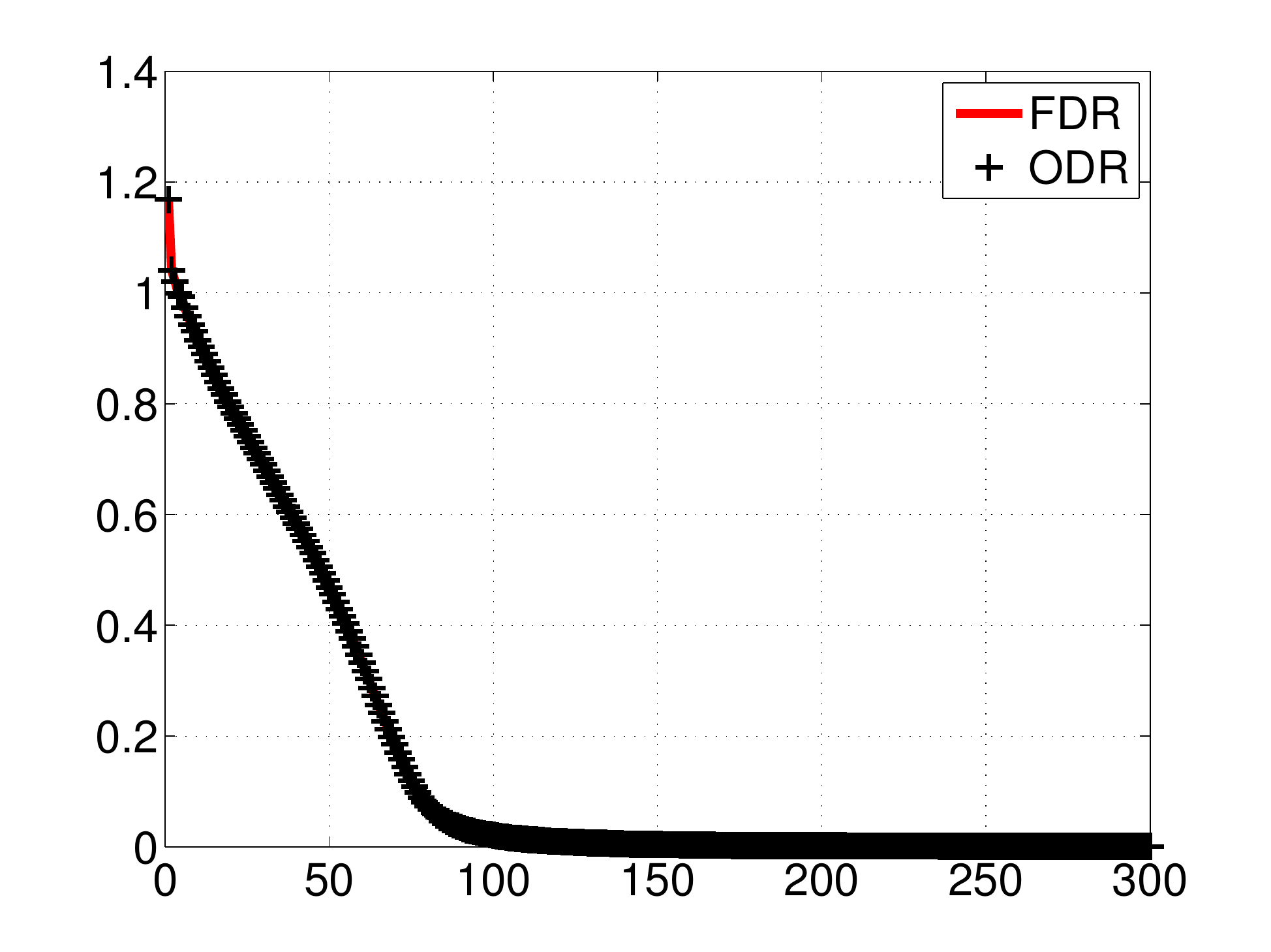}
}\hspace{-0.5cm}\subfigure[TCB + RI]
{
\includegraphics[width=4.2cm]{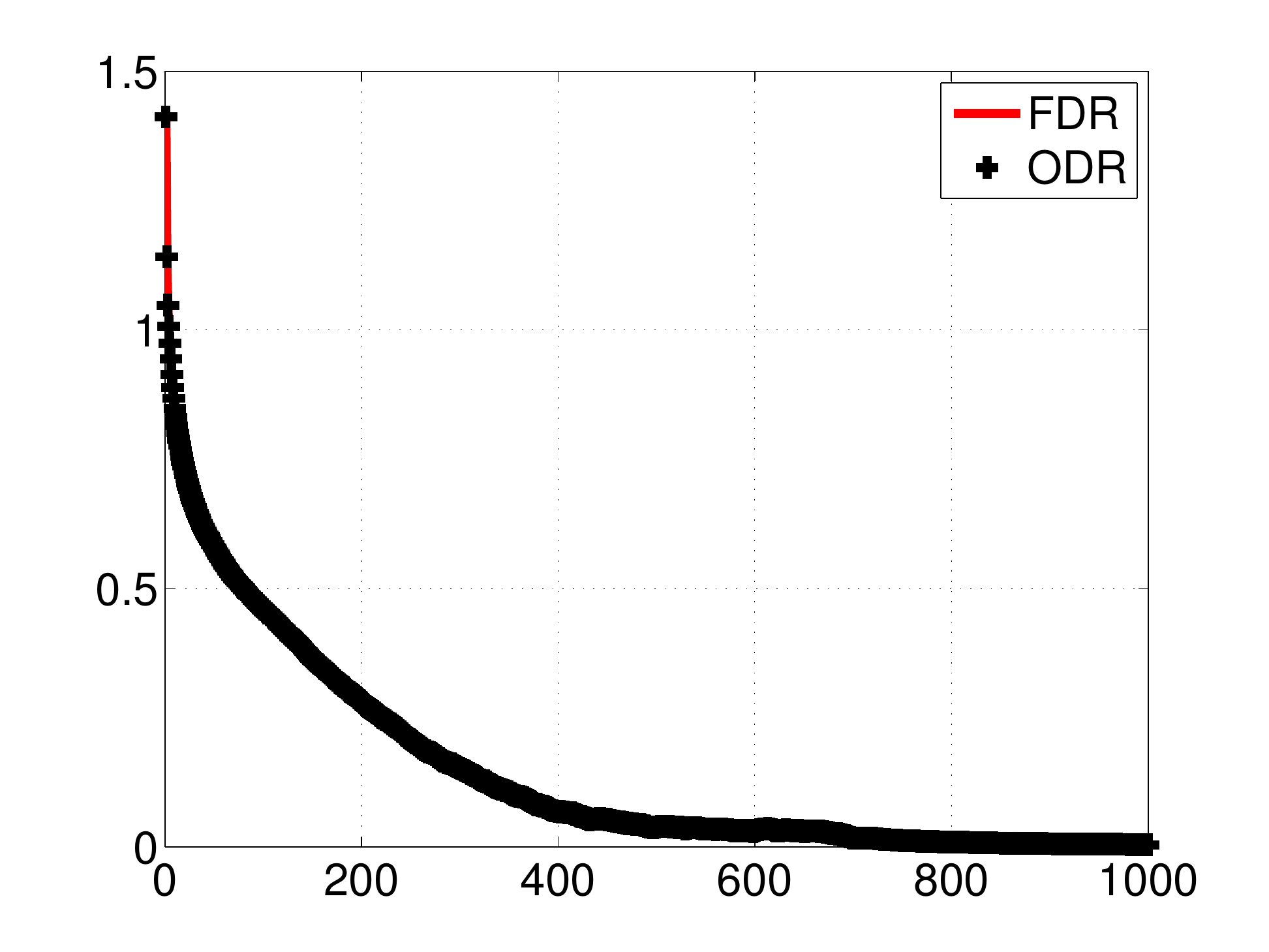}
}\hspace{-0.5cm}\subfigure[TCB + CI]
{
\includegraphics[width=4.2cm]{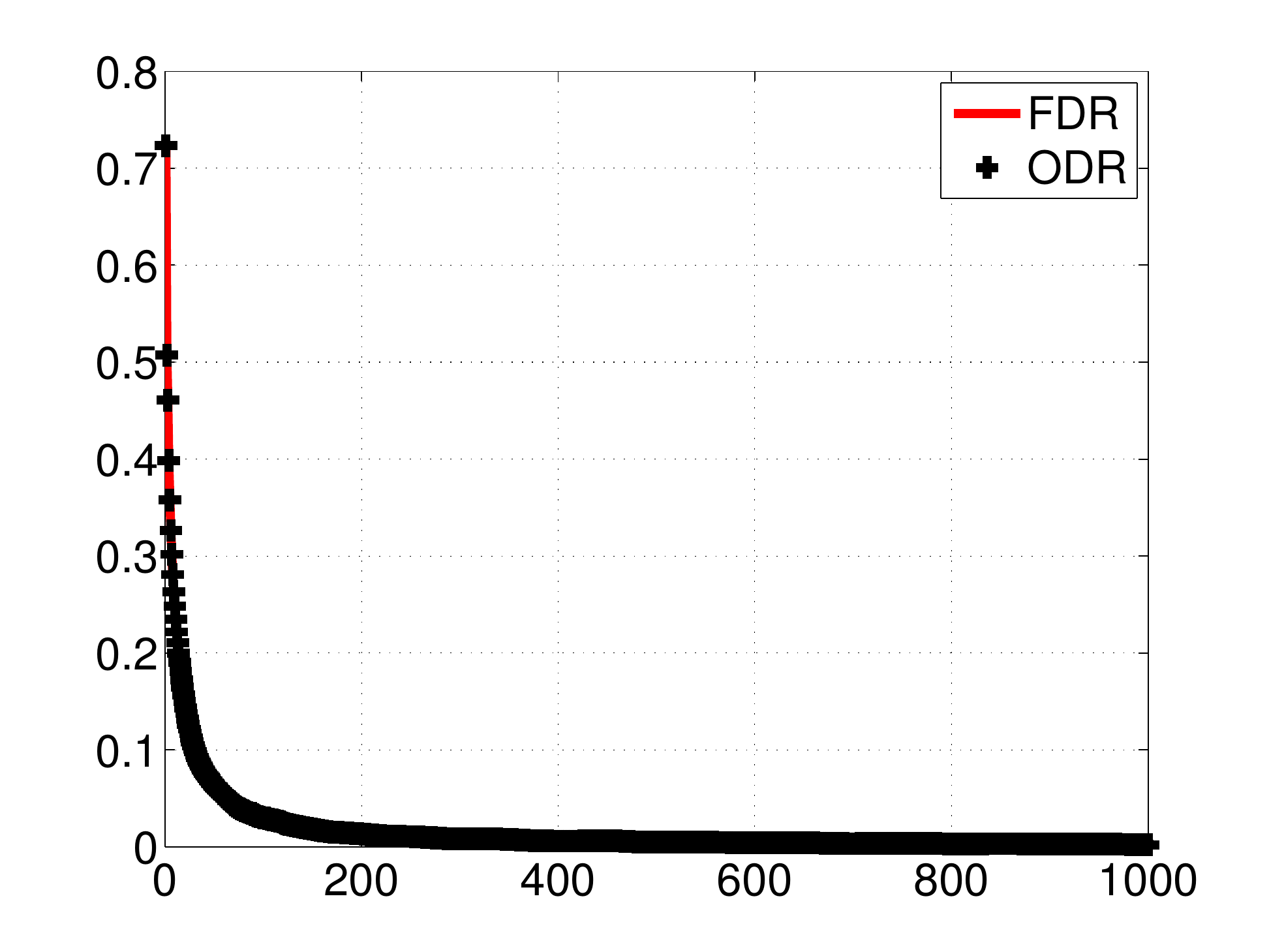}
}
%\begin{figure}
%\begin{center}
\commentout{%2 random masks
\subfigure[RPP + RI]
{
\includegraphics[width=4.2cm]{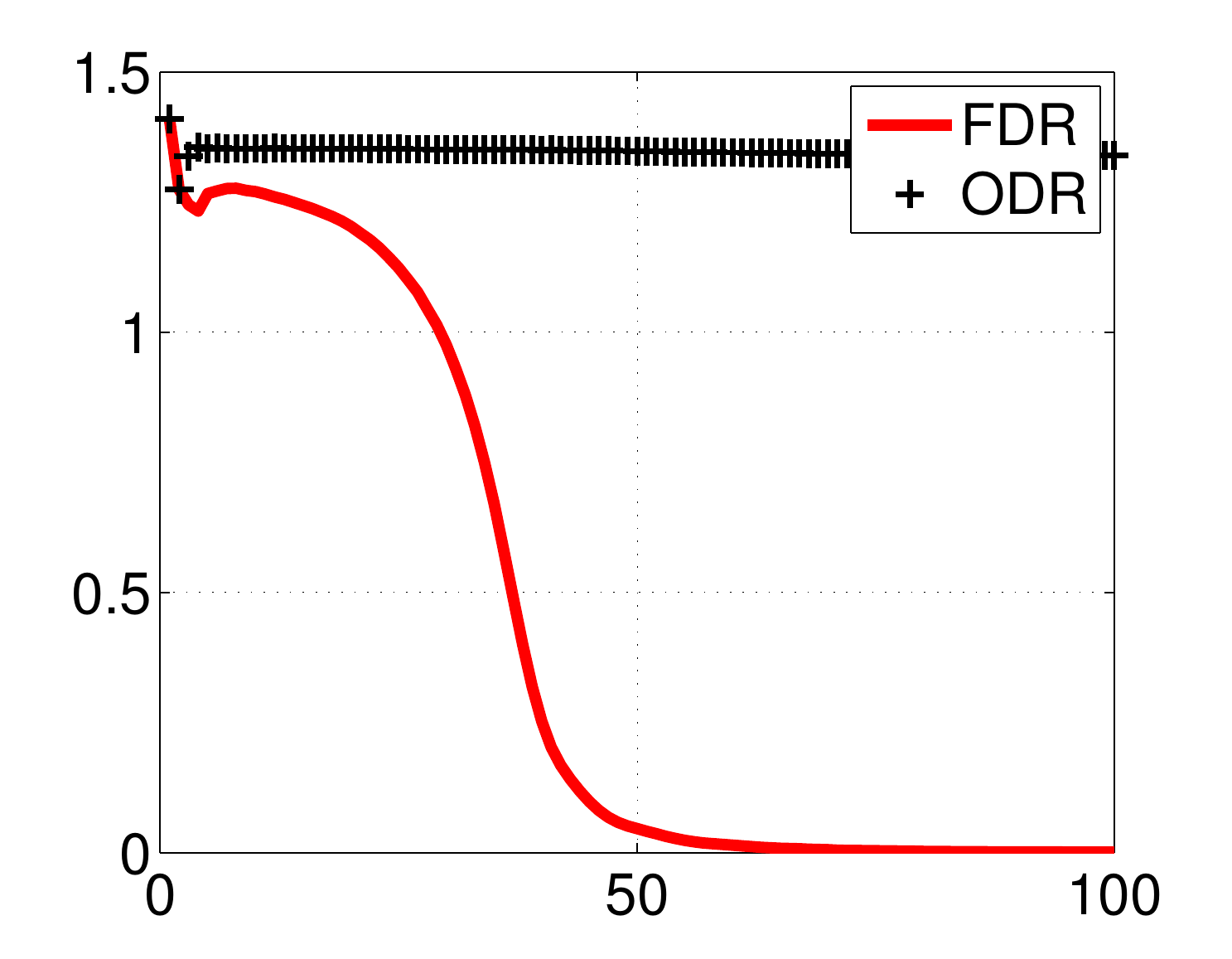}
}\hspace{-0.5cm}\subfigure[TCB + RI]
{
\includegraphics[width=4.2cm]{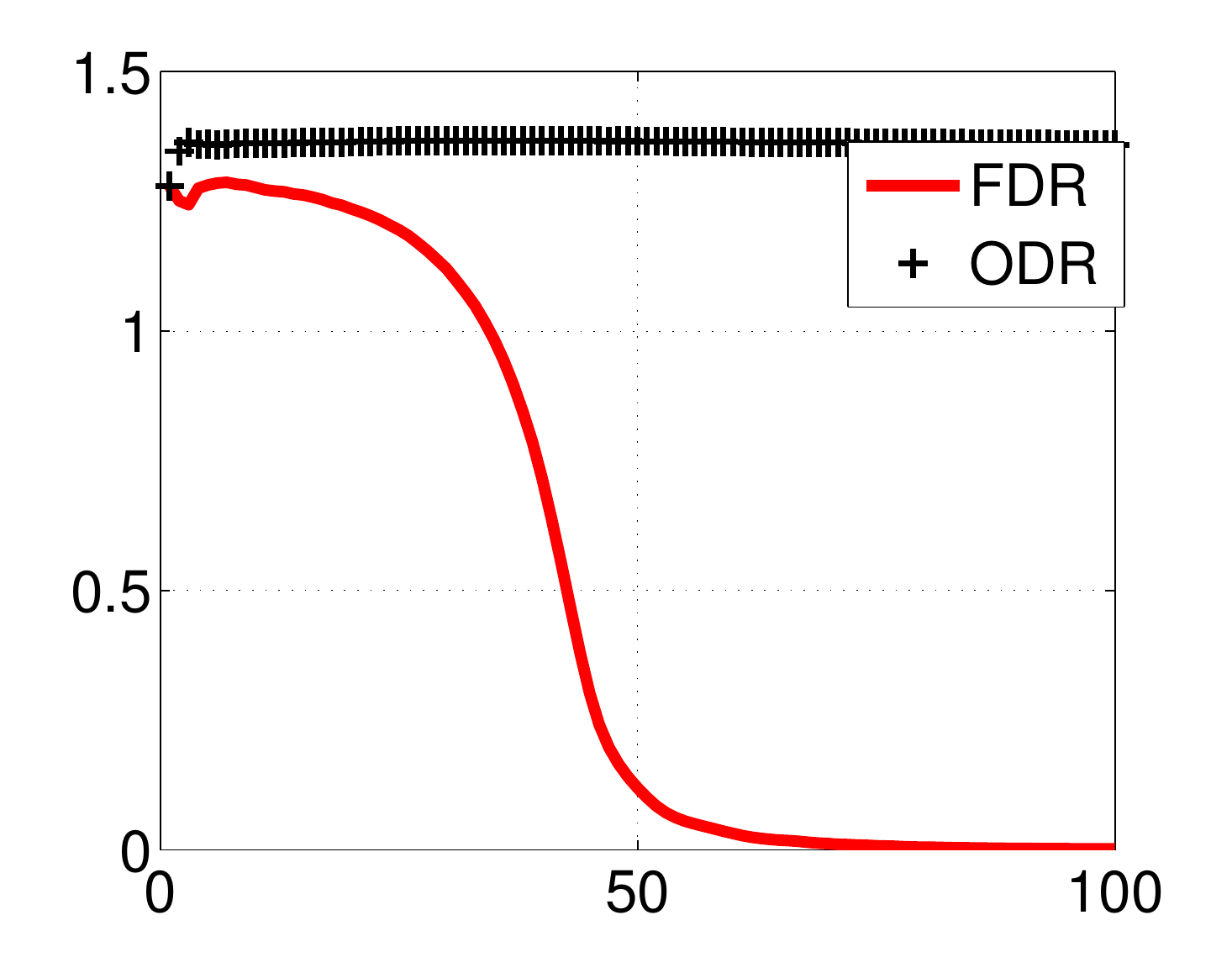}
}\hspace{-0.5cm}\subfigure[RPP + CI]
{
\includegraphics[width=4.2cm]{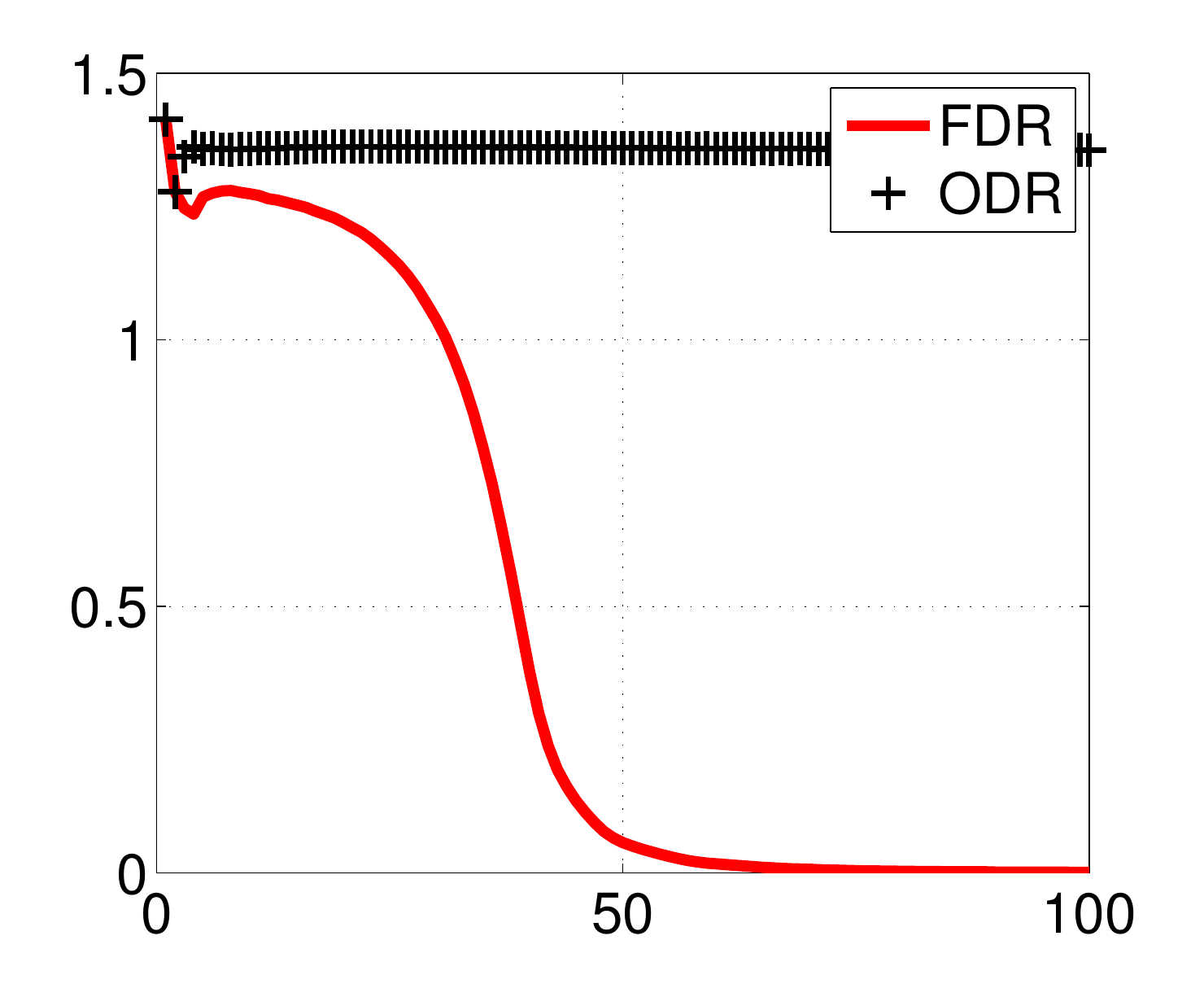}
}\hspace{-0.5cm}\subfigure[TCB + CI]
{
\includegraphics[width=4.3cm]{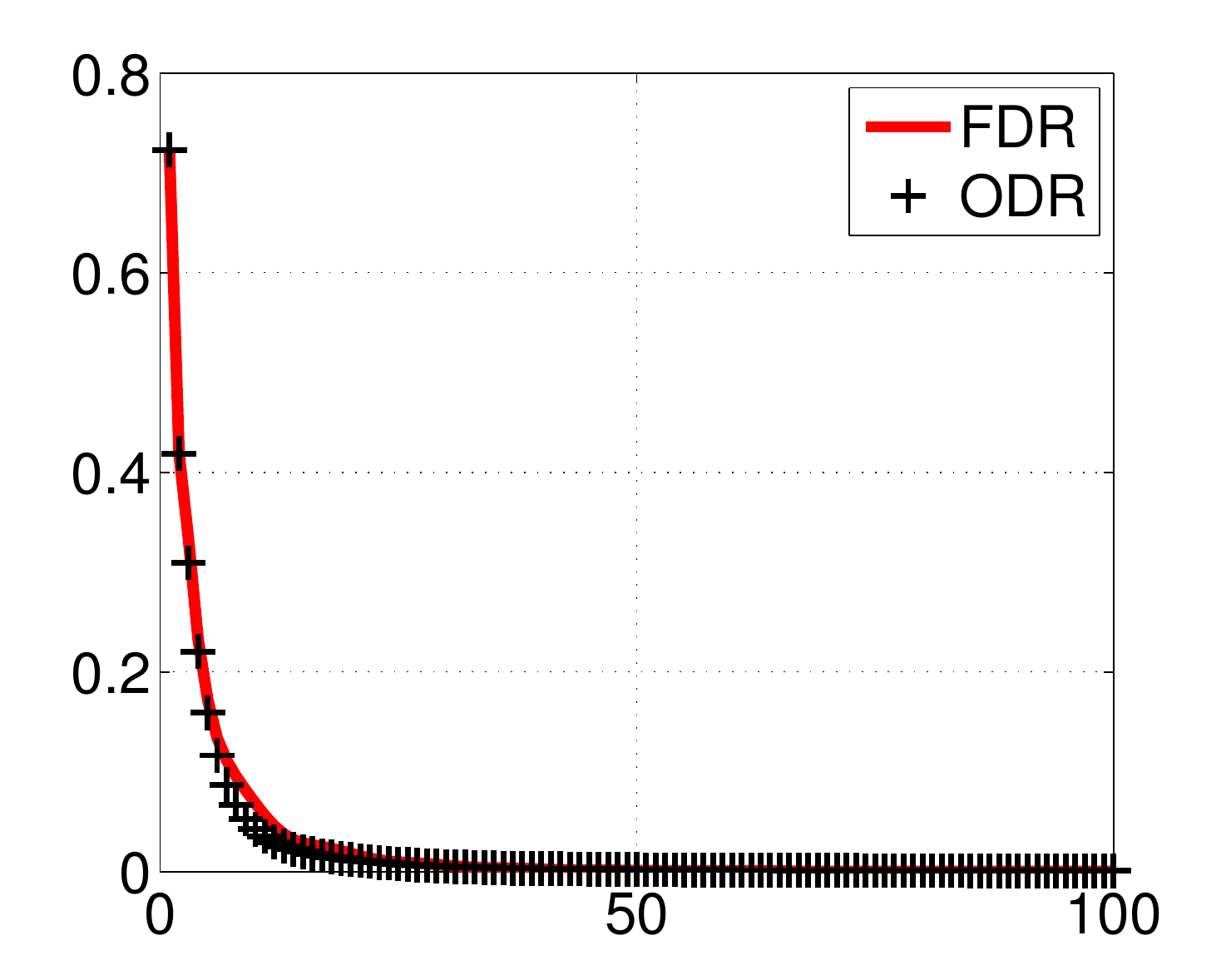}
}
\subfigure[RPP + RI]
{
\includegraphics[width=4.2cm]{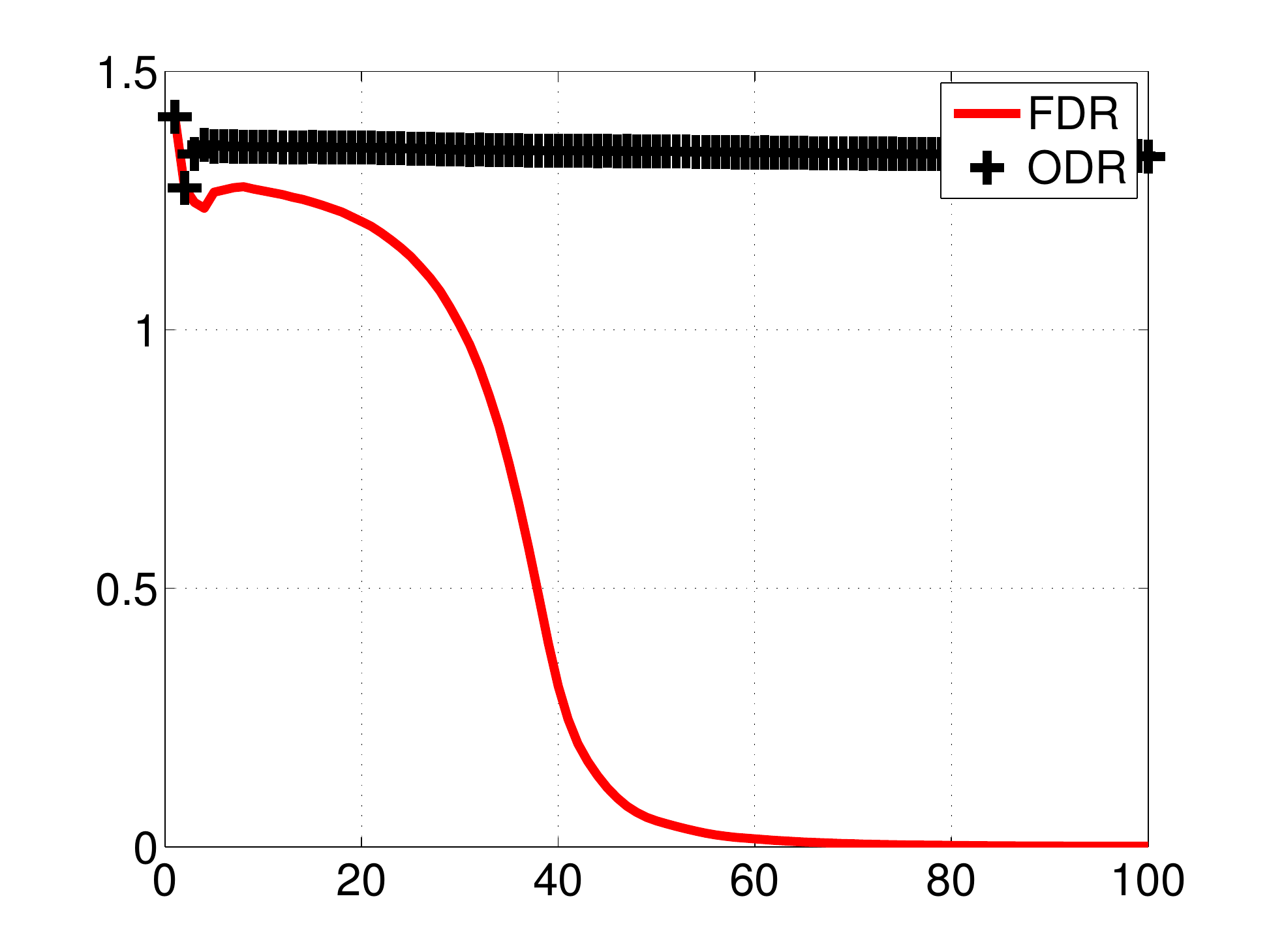}
}\hspace{-0.6cm}
\subfigure[RPP + CI]
{
\includegraphics[width=4.2cm]{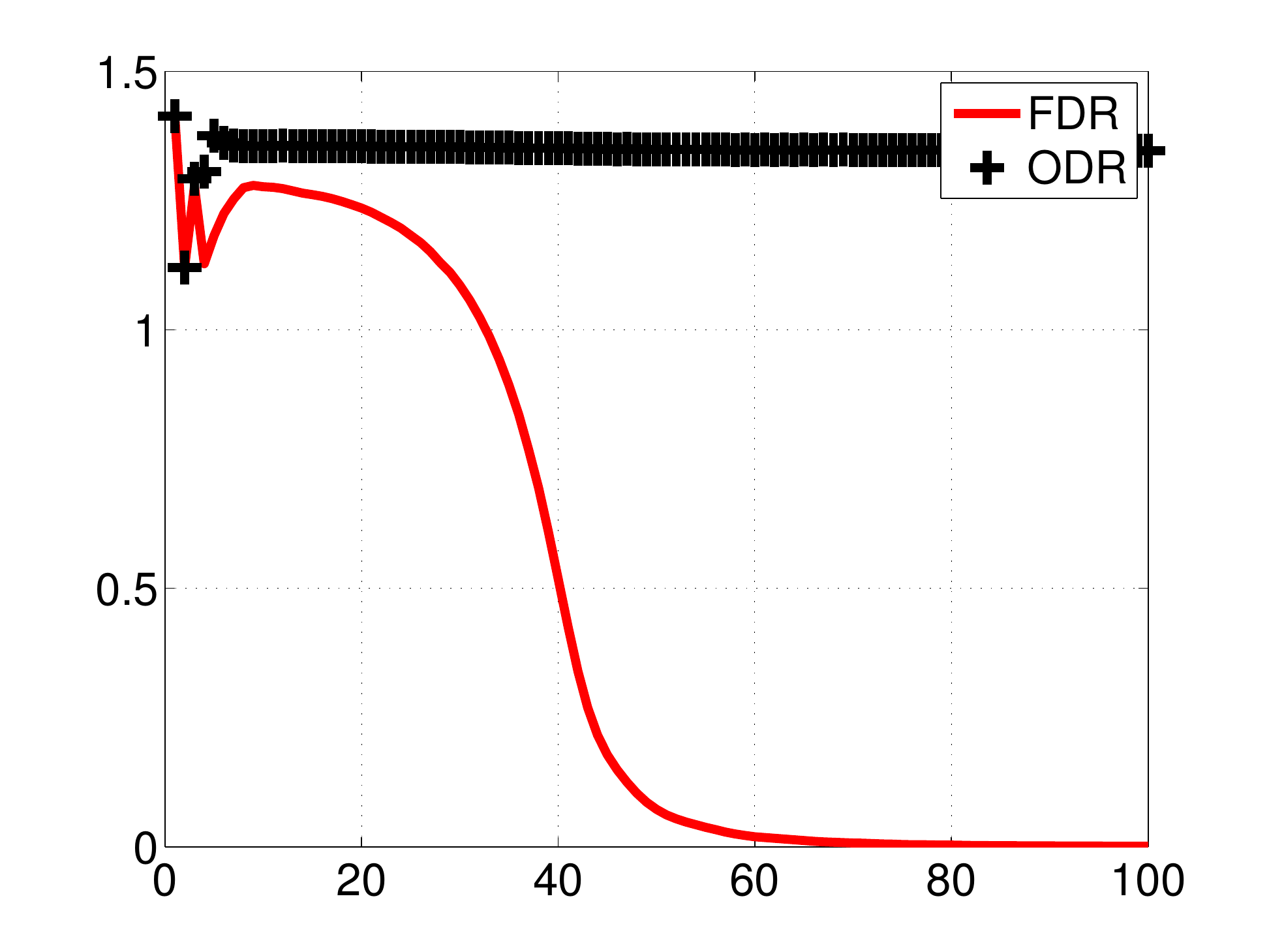}
}\hspace{-0.6cm}
\subfigure[TCB + RI]
{
\includegraphics[width=4.2cm]{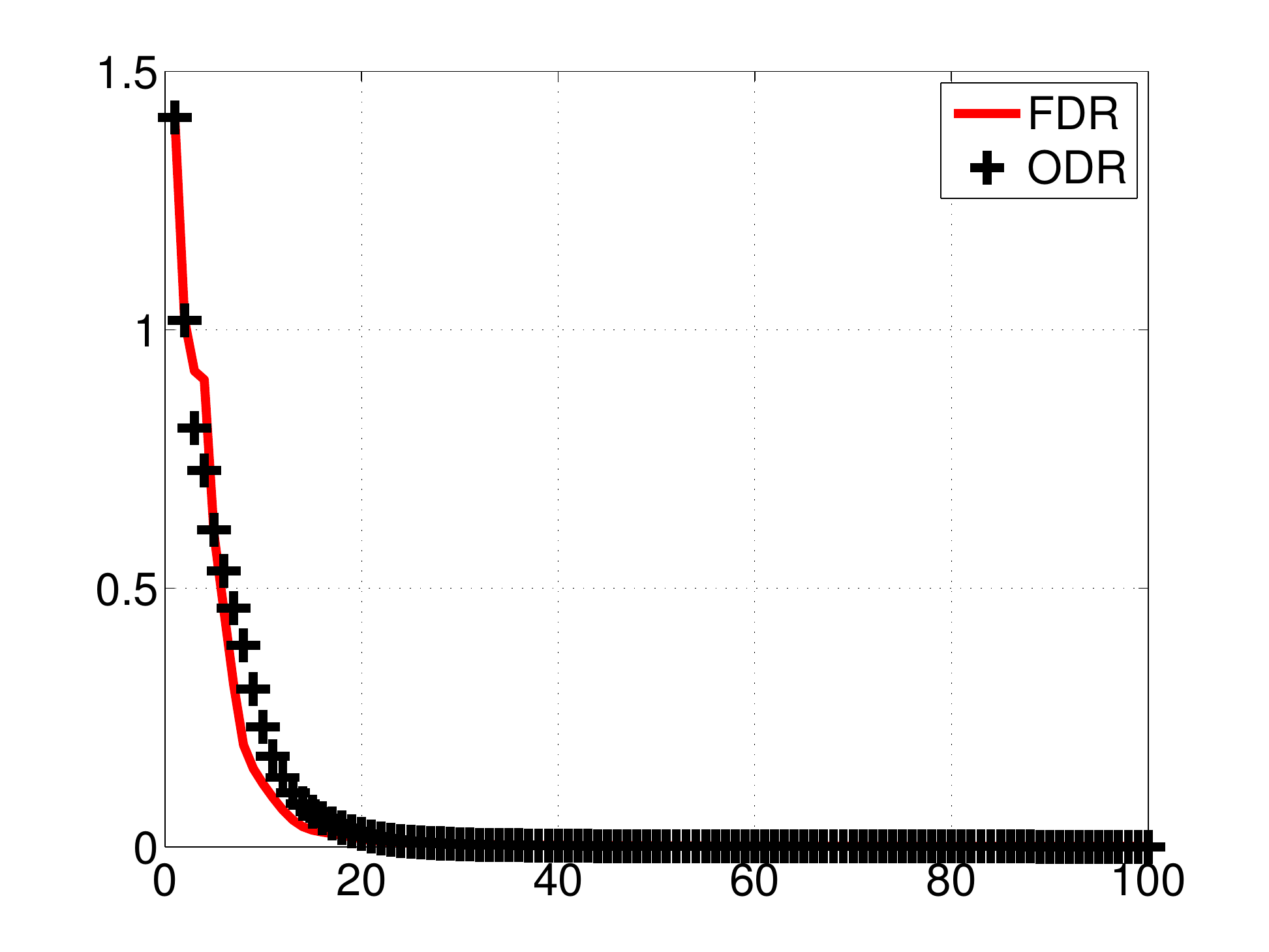}
}\hspace{-0.6cm}
\subfigure[TCB + CI]
{
\includegraphics[width=4.2cm]{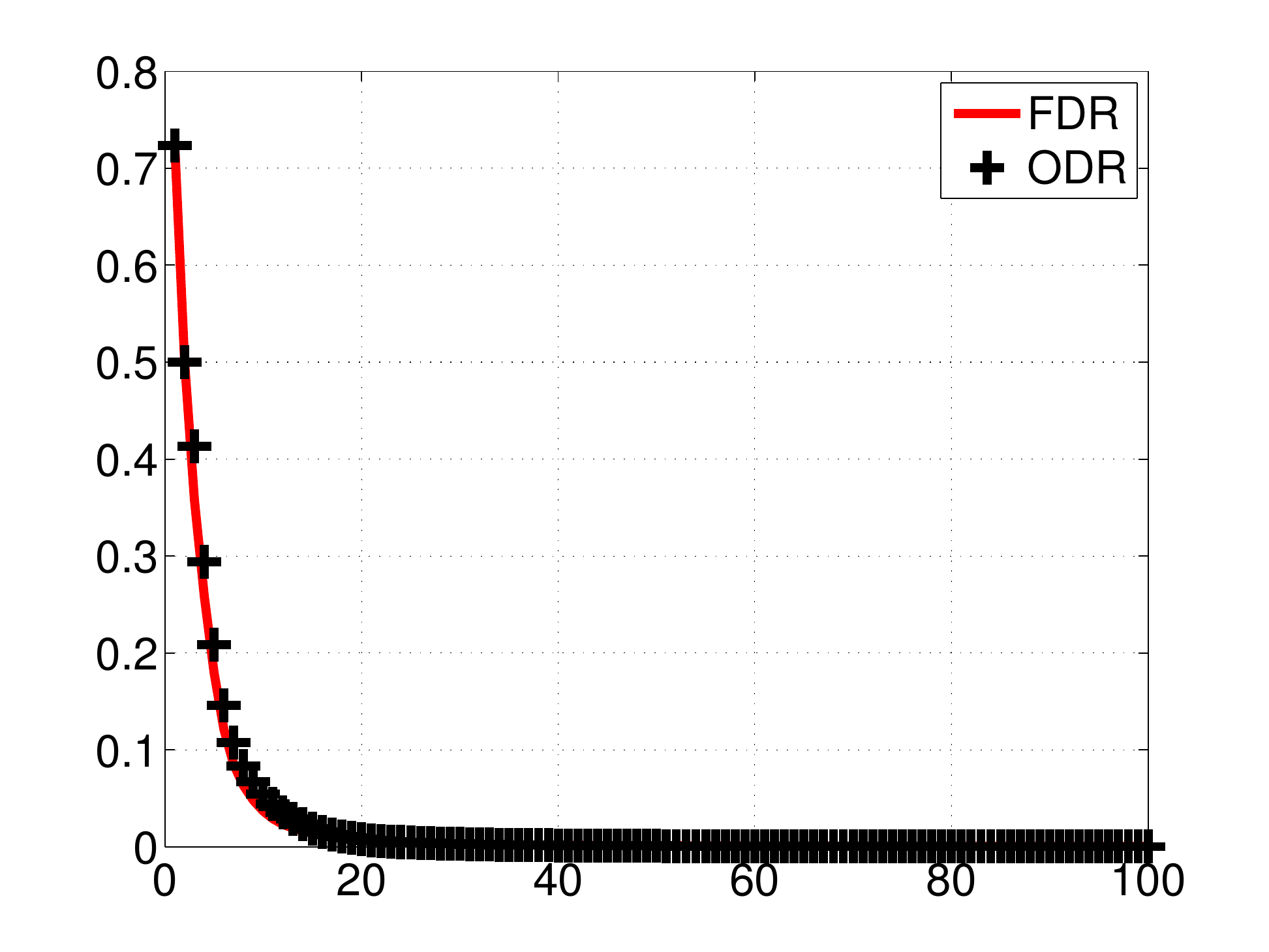}
}
}
\caption{Relative error  versus iteration in the 1-pattern case with
two different sector constraints: $[0,\pi/2]$ for (a)-(d)  and  $[0,\pi]$ for (e)-(h).}
\label{fig:one}
\end{center}
\end{figure}
Fig. \ref{fig:two} (a)-(d) shows the results of the 1-pattern case for which the sector condition is
imposed (ODR is equivalent to FDR as $N=\tilde n$). 

To test the effect of  the sector constraint, the phase of RPP is uniformly distributed in two different intervals: $[0,\pi/2]$ and $[0,\pi]$. While FDR/ODR global convergence  regardless the initialization is evident, the rate of convergence decreases as
the sector enlarges. When the sector constraint is absent, the iteration ceases to converge in general.

\subsection{$1\half$-mask case}

\begin{figure}[t!]
\subfigure[RPP + RI]
{
\includegraphics[width=4.2cm]{figs/PP1_5RI}
}\hspace{-0.6cm}
\subfigure[RPP + CI]
{
\includegraphics[width=4.2cm]{figs/PP1_5CI}
}\hspace{-0.6cm}
\subfigure[TCB + RI]
{
\includegraphics[width=4.2cm]{figs/TCB1_5RI}
}\hspace{-0.6cm}
\subfigure[TCB + CI]
{
\includegraphics[width=4.2cm]{figs/TCB1_5CI}
}
\caption{Relative error versus iteration in the $1\half$-mask case.}
\commentout{%unnecessarily now with one plain diffraction pattern
\subfigure[FDR + RI]
{
\includegraphics[width=3.5cm]{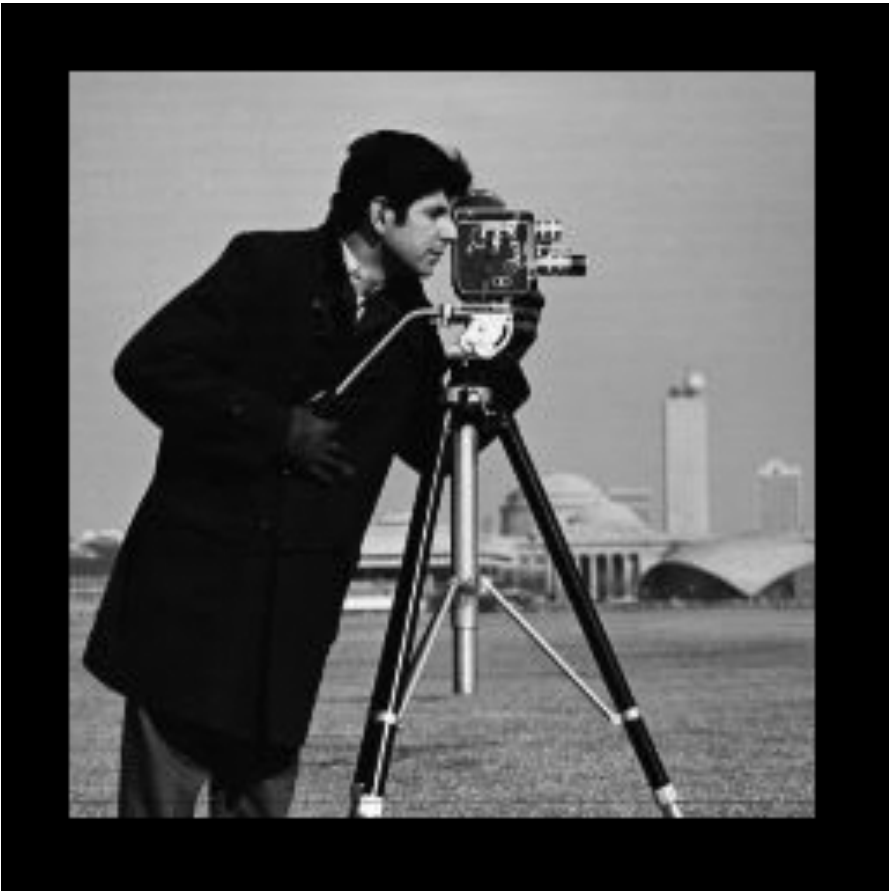}
\includegraphics[width=3.5cm]{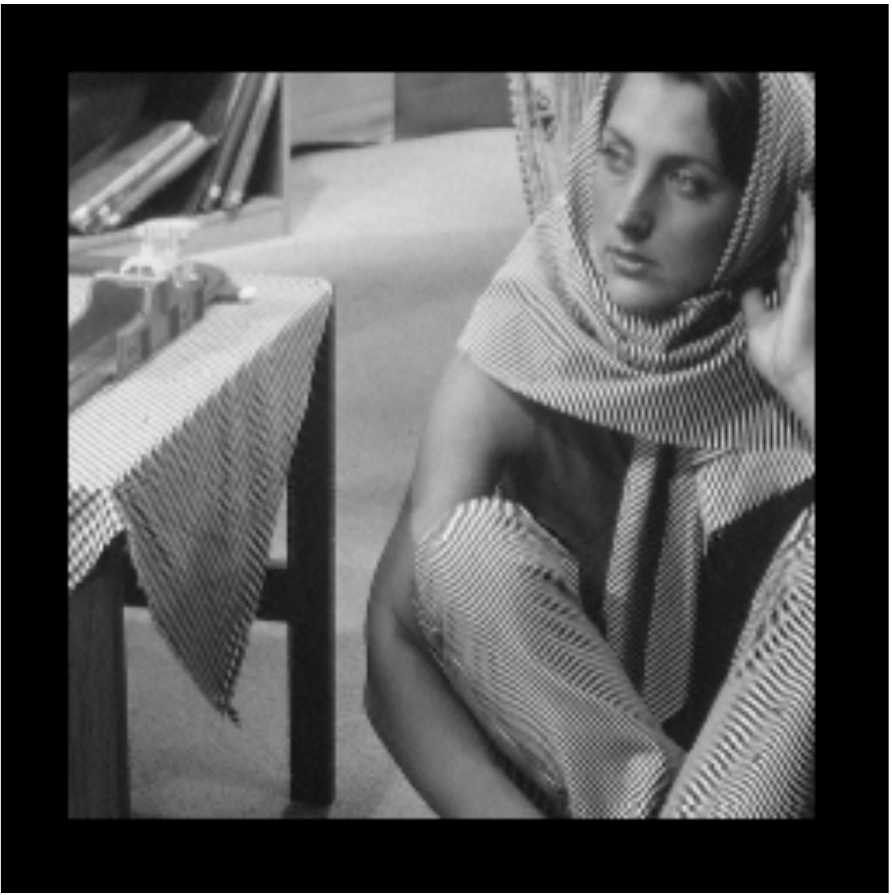}
}
\subfigure[FDR + CI]
{
\includegraphics[width=3.5cm]{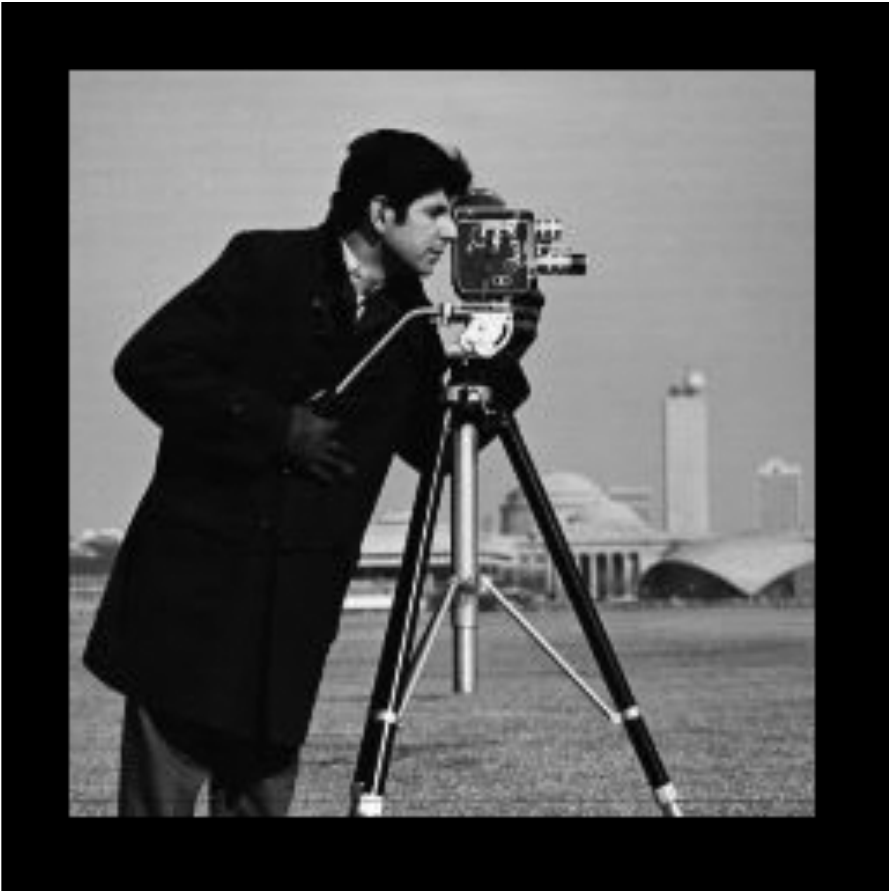}
\includegraphics[width=3.5cm]{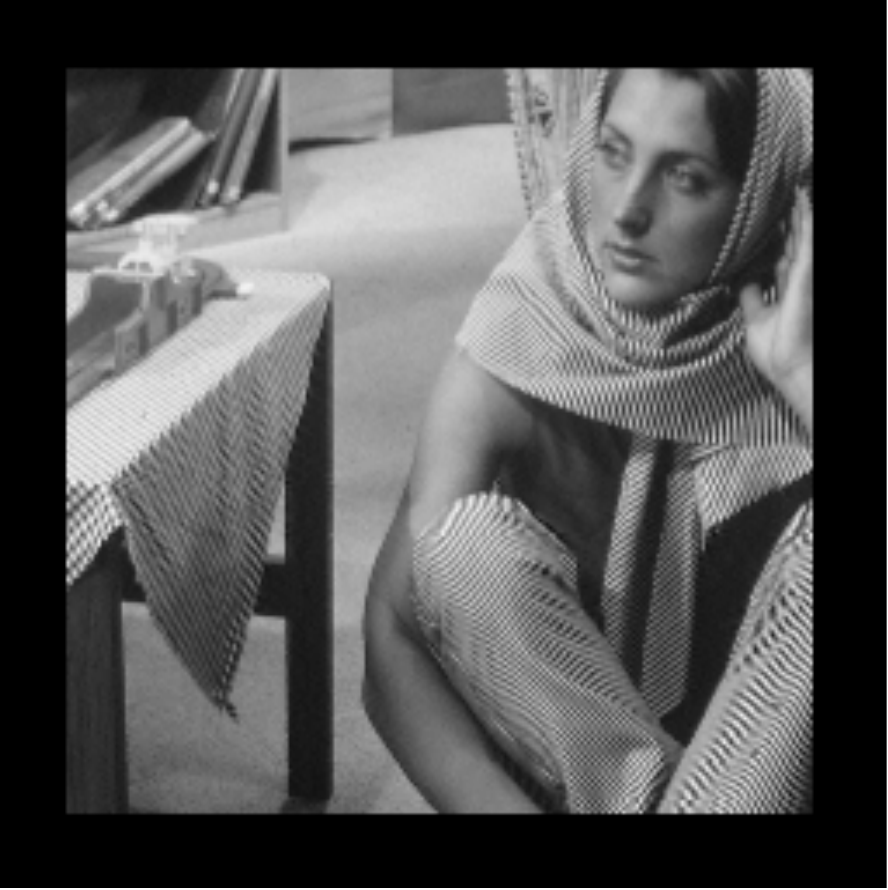}
}
\subfigure[ODR + RI]
{
%\includegraphics[width=0.3\textwidth]{A2_ca_randn_xrHIO}
%}\subfigure[RPP imaginary part]
\includegraphics[width=3.5cm]{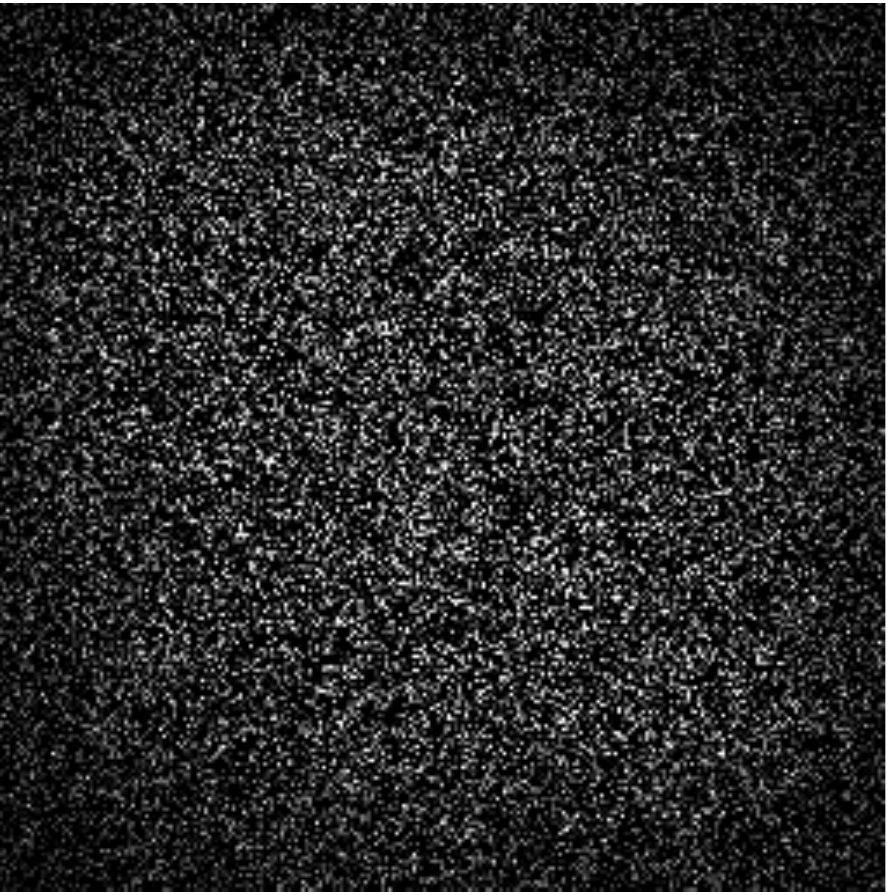}
\includegraphics[width=3.5cm]{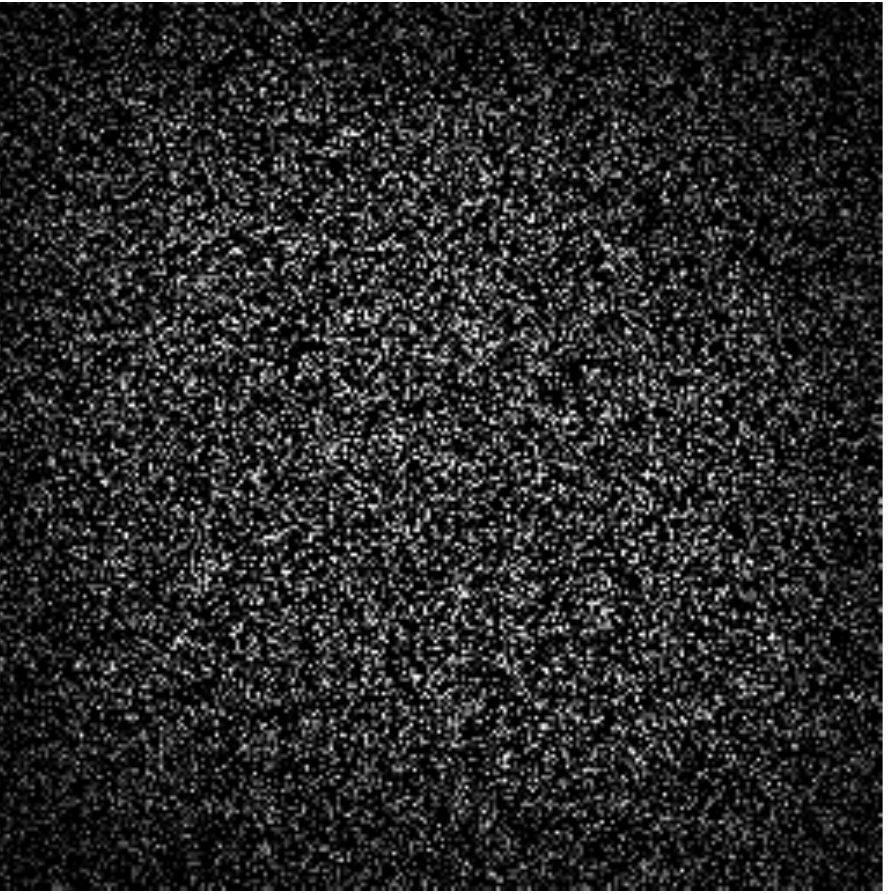}
}
\subfigure[ODR + CI]
{
\includegraphics[width=3.5cm]{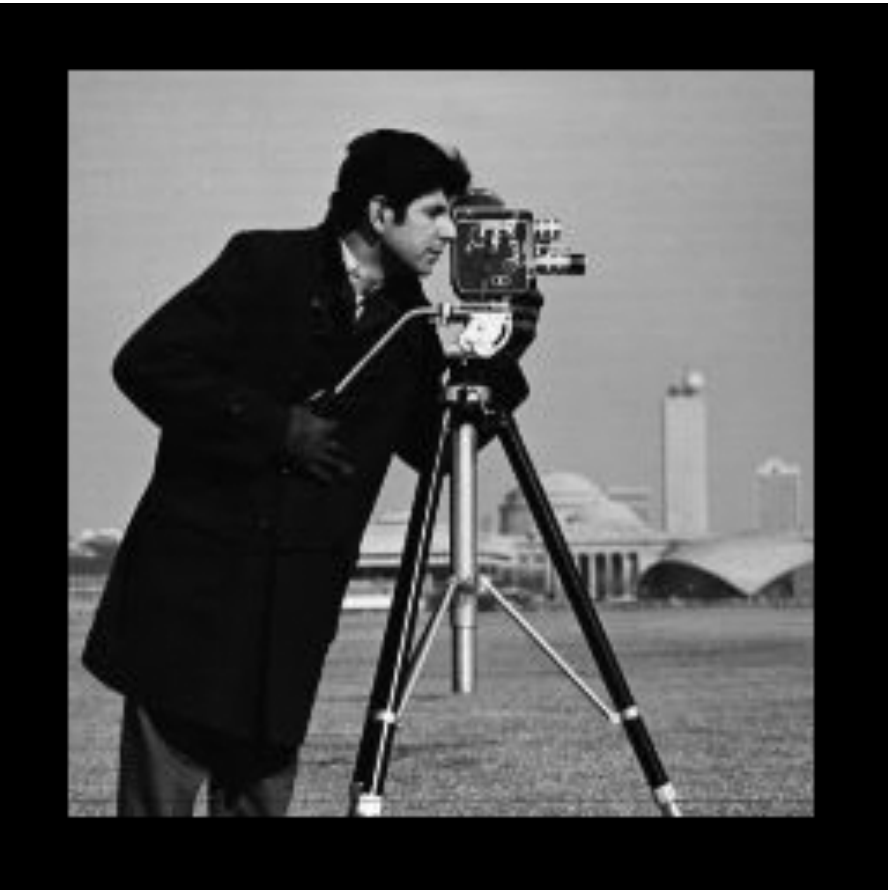}
\includegraphics[width=3.5cm]{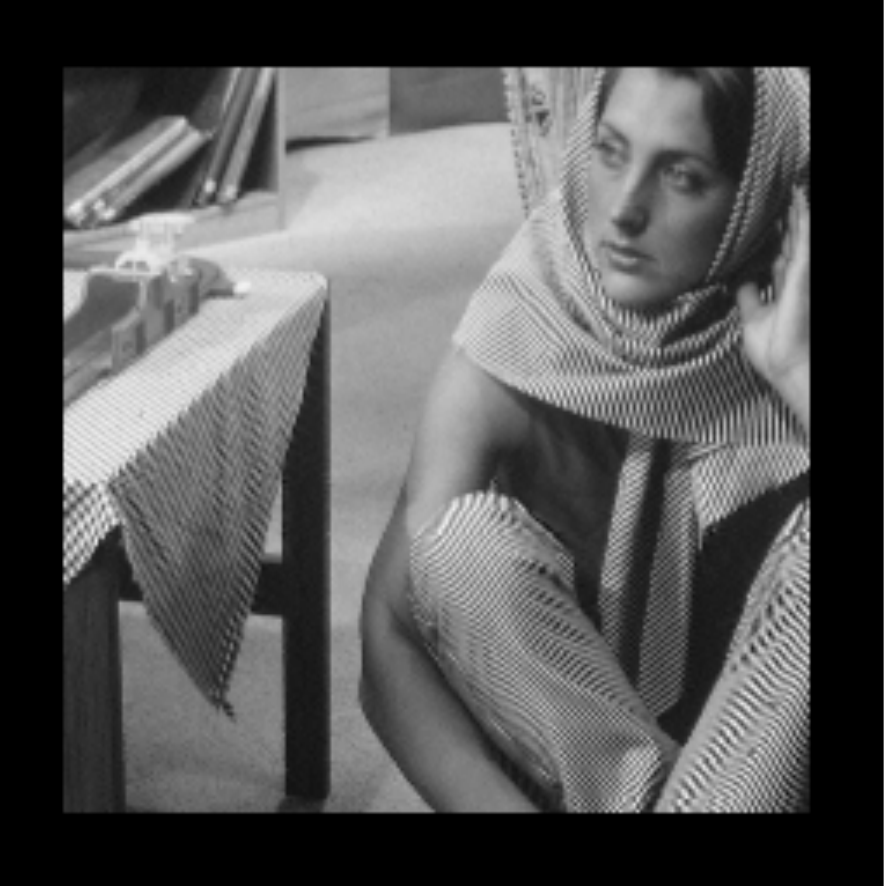}
}
\caption{Reconstructions of TCB in the $1\half$-mask case, c.f. Fig. \ref{fig:two} (f)\&(h)}
}
\label{fig5}\label{fig:two}
\end{figure}

For two- or multi-pattern case, we let the phase of RPP be uniformly distributed in $[0,2\pi]$ (i.e. no sector constraint). 
Fig. \ref{fig:two} (e)-(h) shows the results of the $1\half$-mask case for which ODR is implemented
with $\tilde n\approx 4n<N\approx 8n$ and hence not equivalent to FDR. We see that  the performances
of ODR and FDR are drastically different: While FDR converges to the true images
regardless the initialization within 100 iterations, ODR does so only for the deterministic image TCB. 
%In Fig. \ref{fig:two}(d) we see that  with the constant initialization both schemes converge to  TCB surprisingly fast.   
%Also  the convergence of FDR is not strictly monotonic  as for the 1-mask case in Fig. \ref{fig:one}.

%Fig. \ref{fig5} shows instances  of reconstruction corresponding to Fig. \ref{fig:two} (f) \& (h). 

%\subsection{Noisy data}

Fig. \ref{fig:noise} shows the relative error versus noise-to-signal ratio (NSR) when noise $\ep$ is present in the data where 
\[
\hbox{\rm NSR}\,\,={\|\ep\|\over \|A^*x_0\|}. 
\]
We note that for NSR $\in [0, 20\%]$ there is essentially no difference between the results
with the maximum number of iterations set to 100 and  200 and this segment  of error-noise curves is approximately the same straight line (slope $\approx 2.2$).  For a higher NSR, increasing
the maximum number of iterations reduces the error so with even greater number of 
iterations the straight line segment can be extended to NSR greater than $20\%$.

\begin{figure}[t]
\begin{center}
\subfigure[RPP]{
\includegraphics[width=5cm]{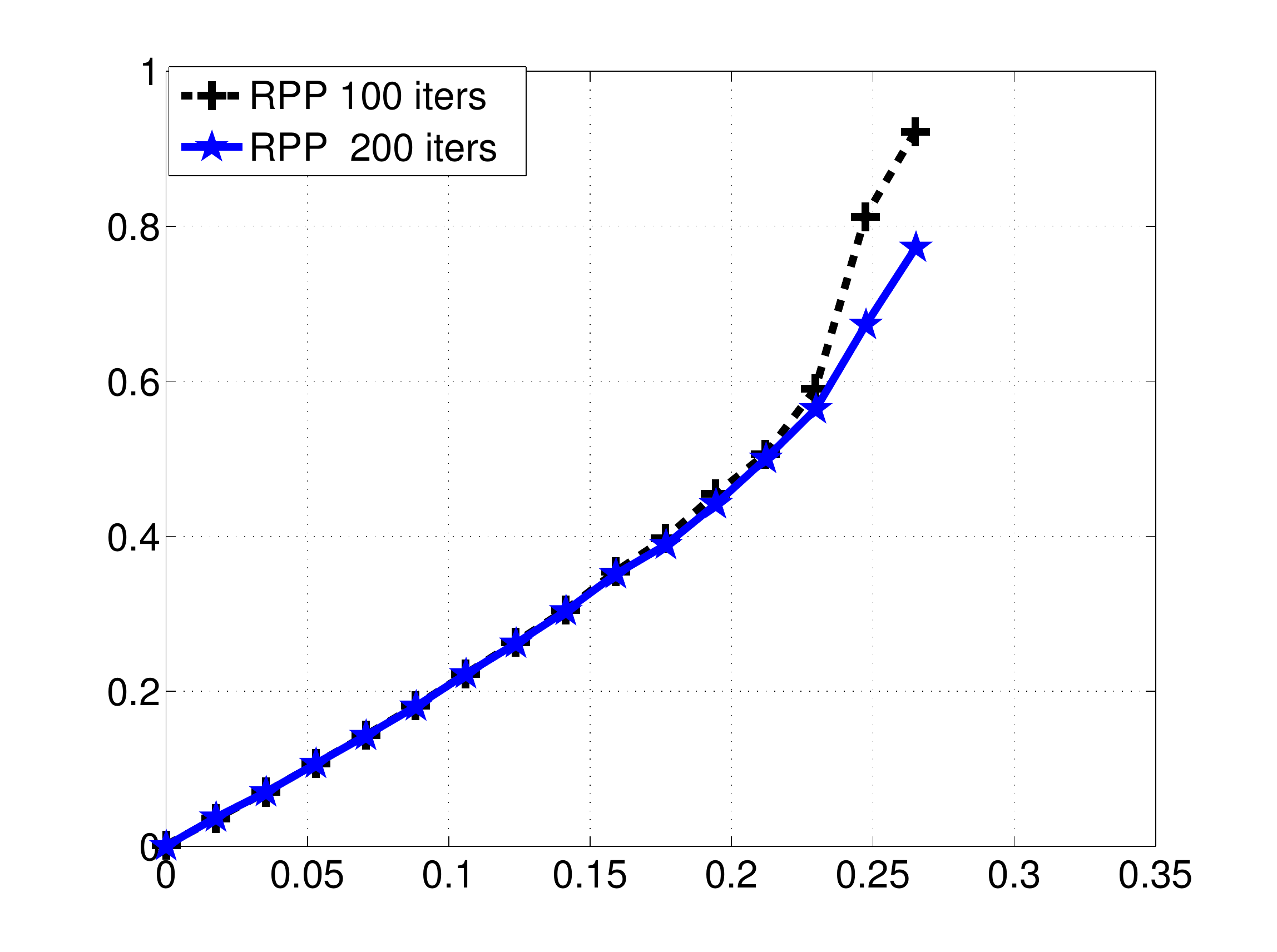}
}\quad 
\subfigure[TCB]{
\includegraphics[width=5cm]{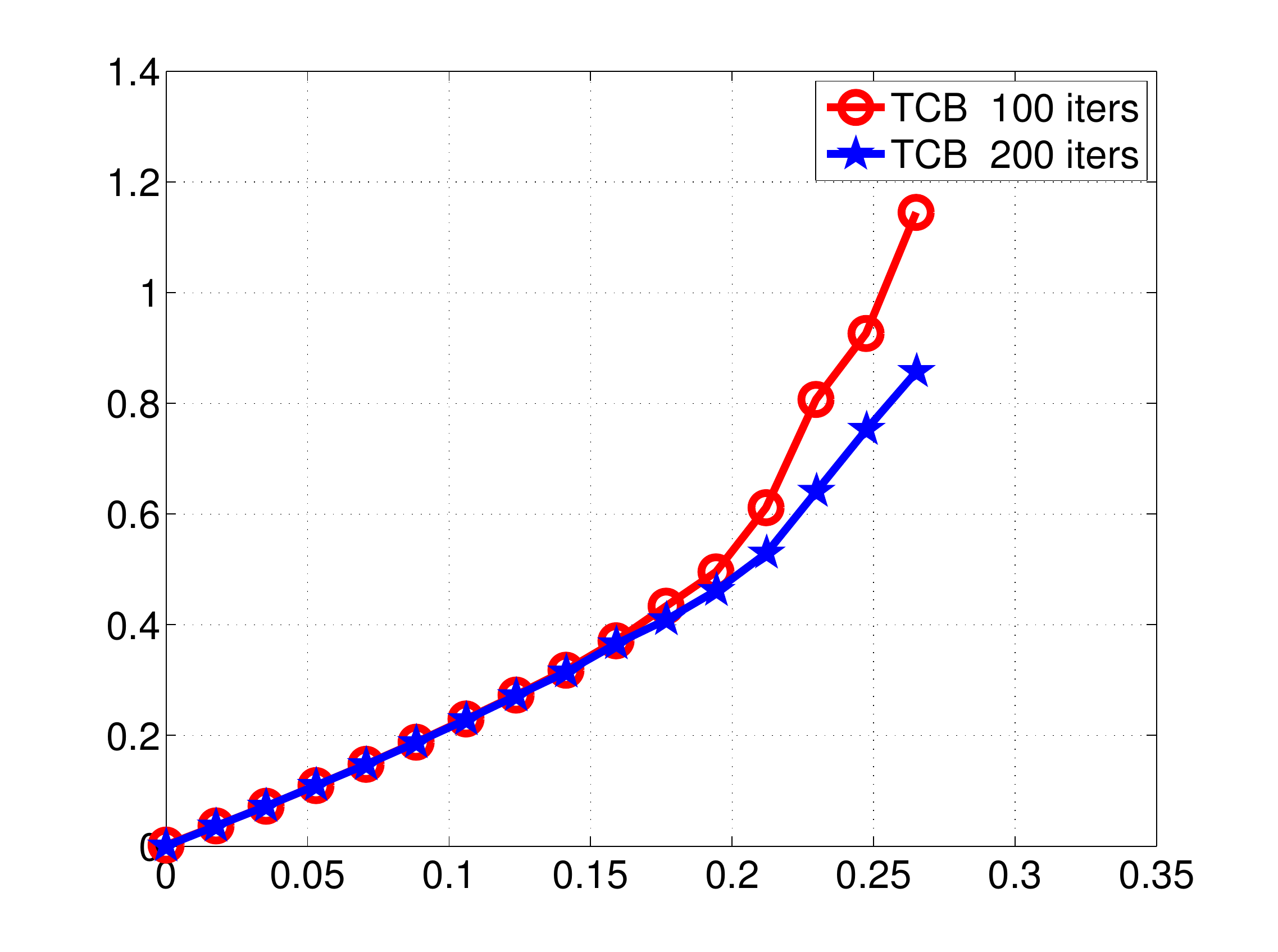}
}
\caption{Relative error versus NSR in the $1\half$-mask case
(both oversampled) with the maximum number of iteration set to 100 or 200. 
 }
\label{fig:noise}
\end{center}
\end{figure}

\subsection{Multi-mask case}

\begin{figure}[t]
\begin{center}
\subfigure[RPP + RI]
{
\includegraphics[width=4.2cm]{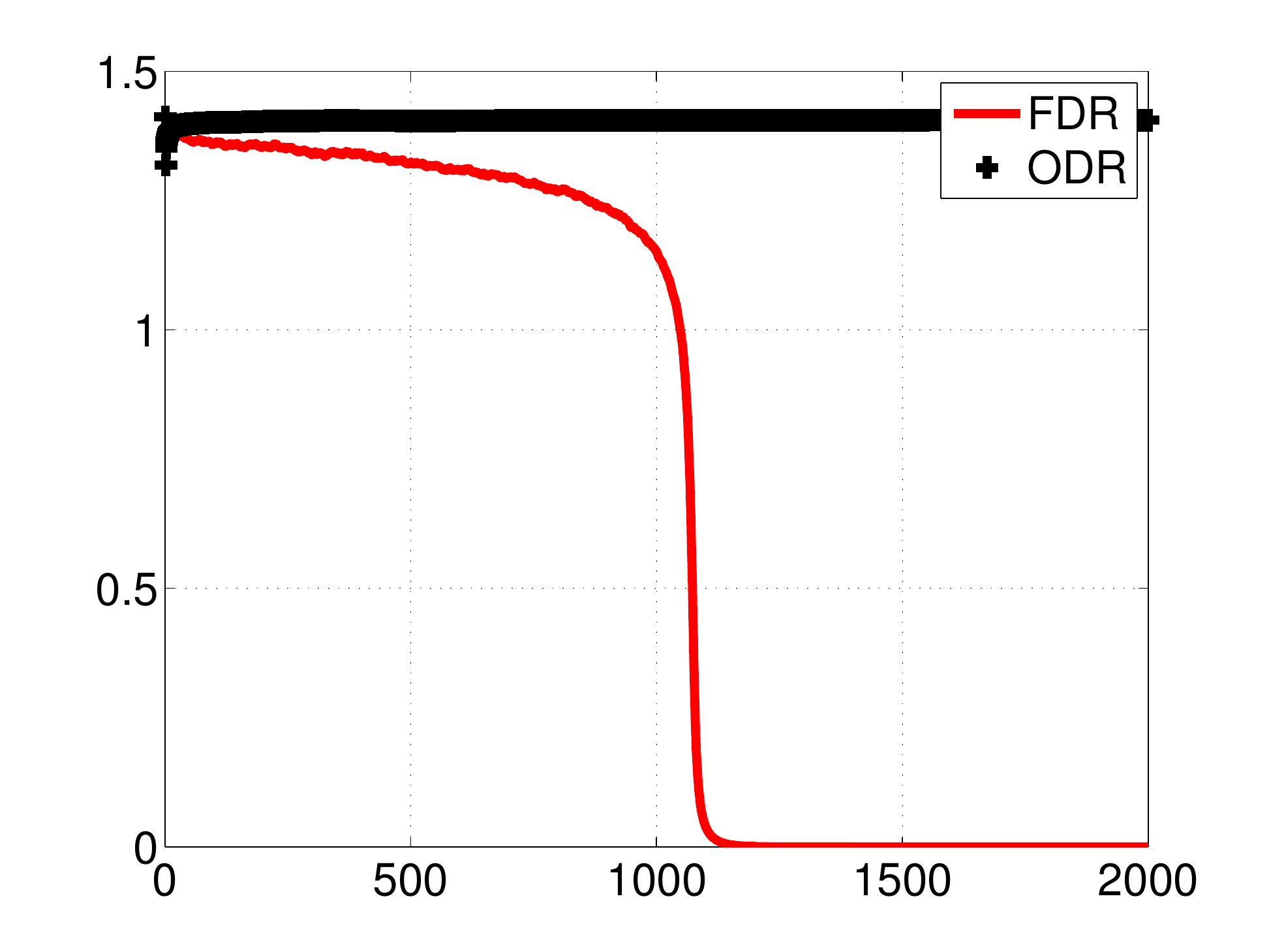}
}\hspace{-0.55cm}
\subfigure[RPP + CI]
{
\includegraphics[width=4.2cm]{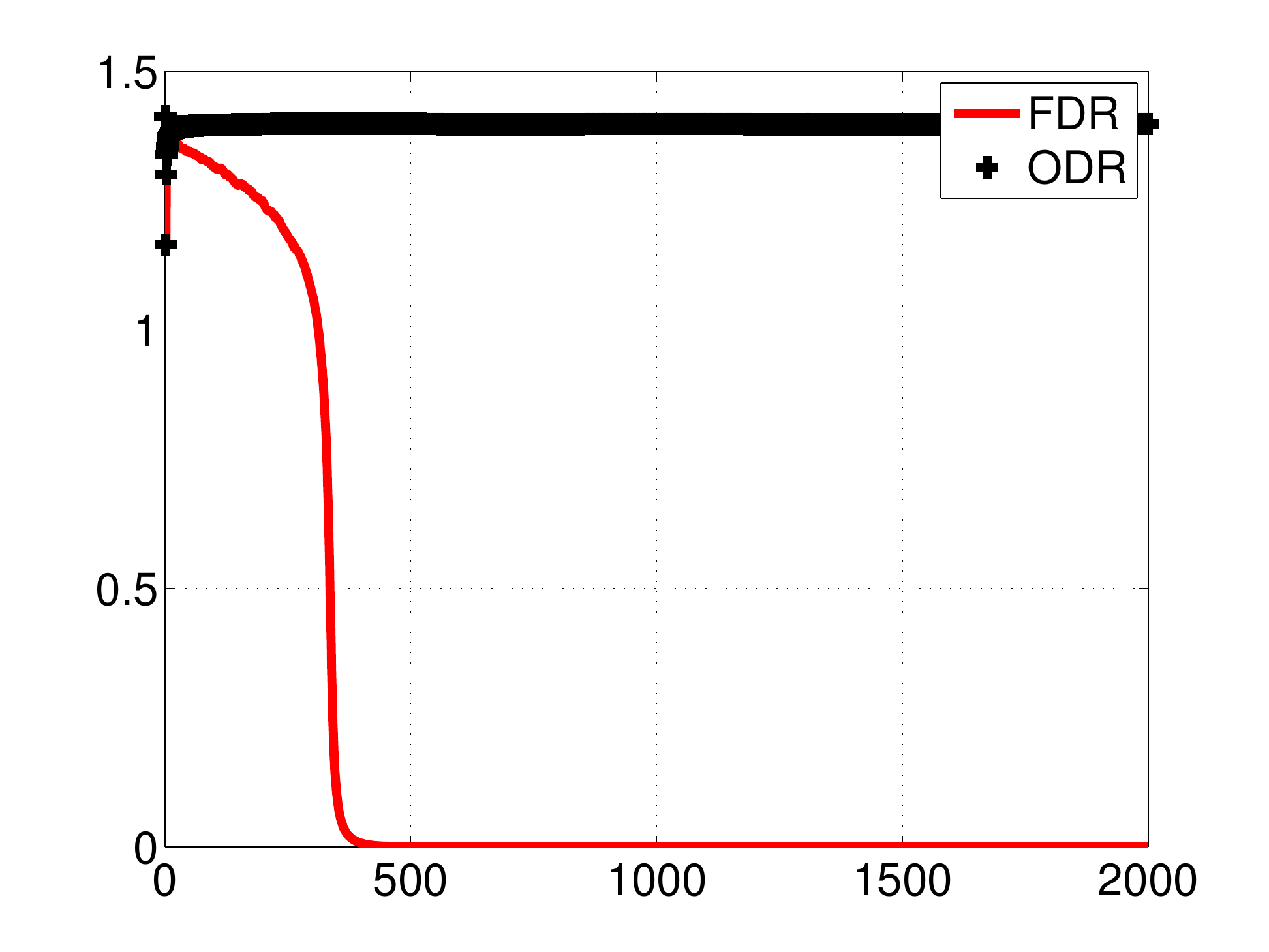}
}\hspace{-0.5cm}\subfigure[TCB + RI]
{
\includegraphics[width=4.2cm]{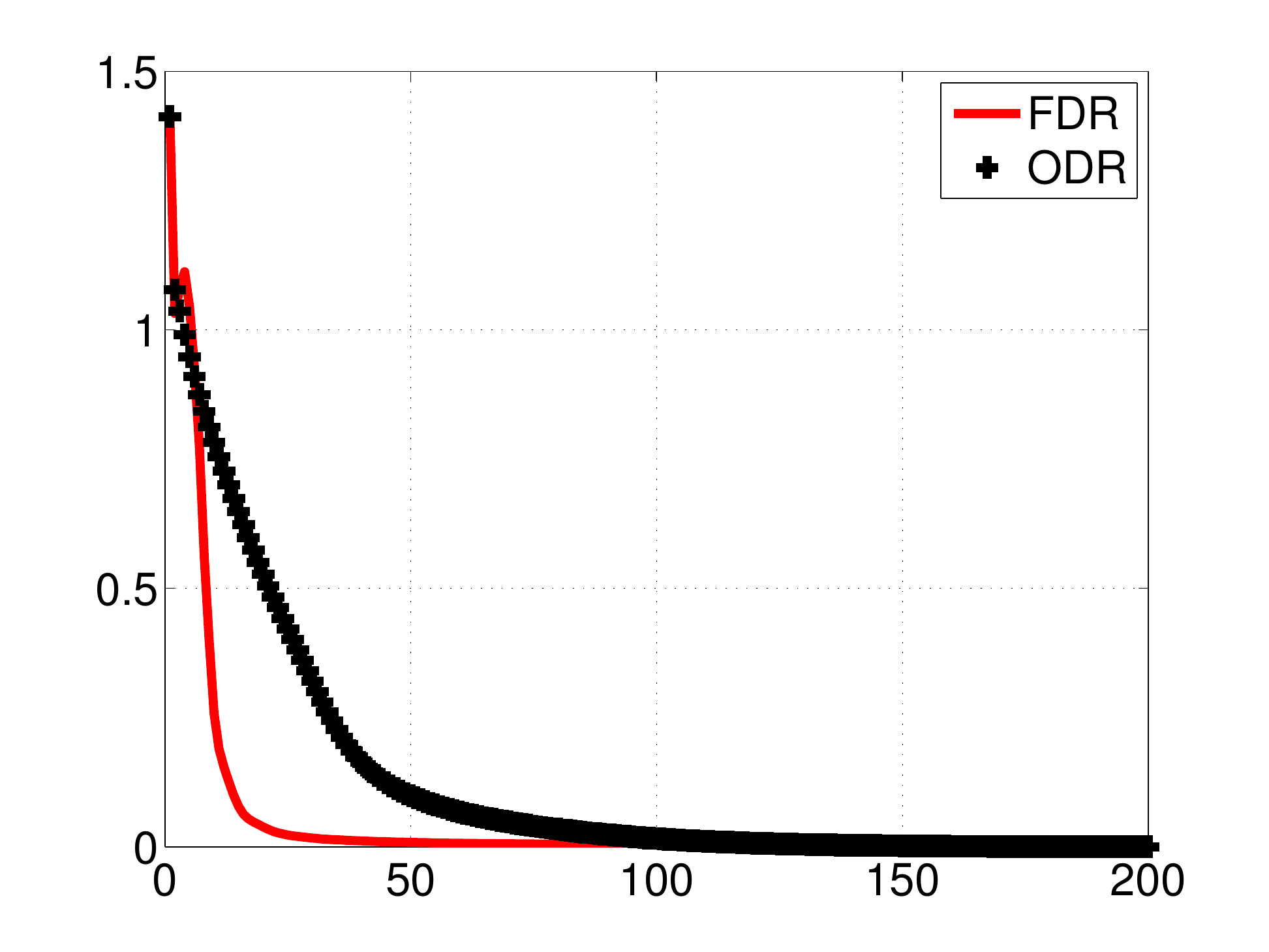}
}\hspace{-0.55cm}\subfigure[TCB + CI]
{
\includegraphics[width=4.2cm]{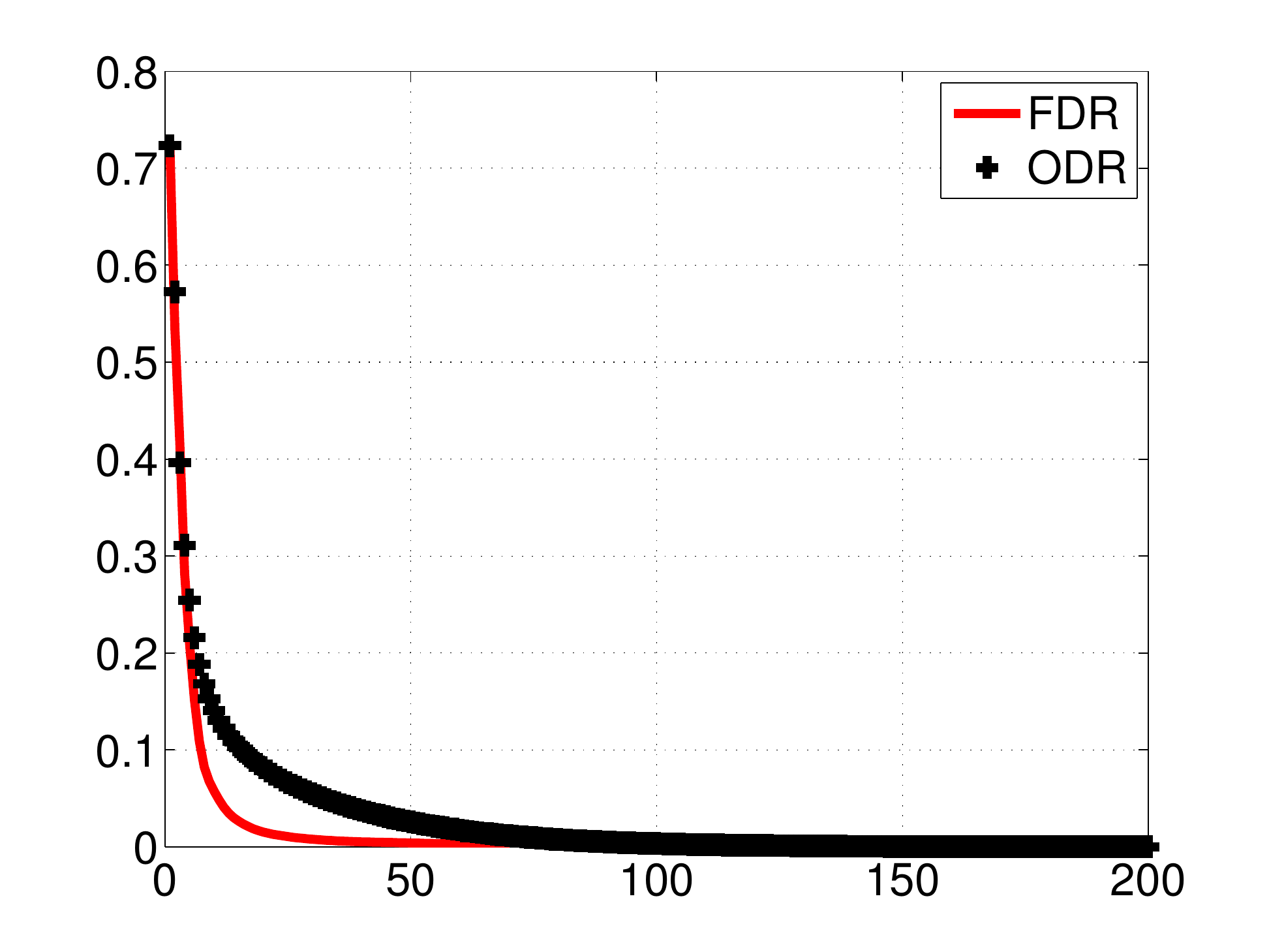}
}
\commentout{
\subfigure[RPP + RI]
{
\includegraphics[width=4.2cm]{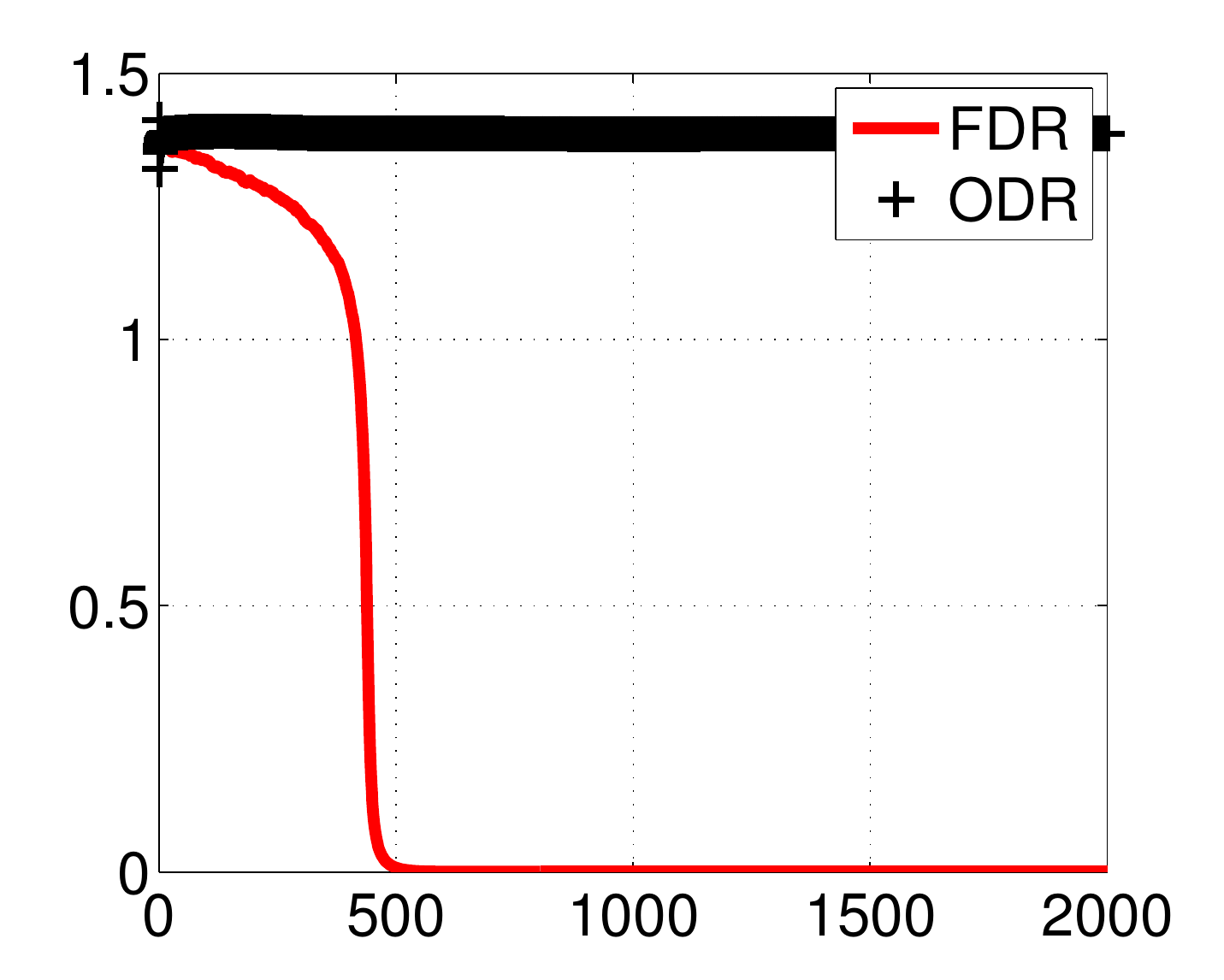}
}
\hspace{-0.7cm}
\subfigure[RPP + CI]
{
\includegraphics[width=4.1cm]{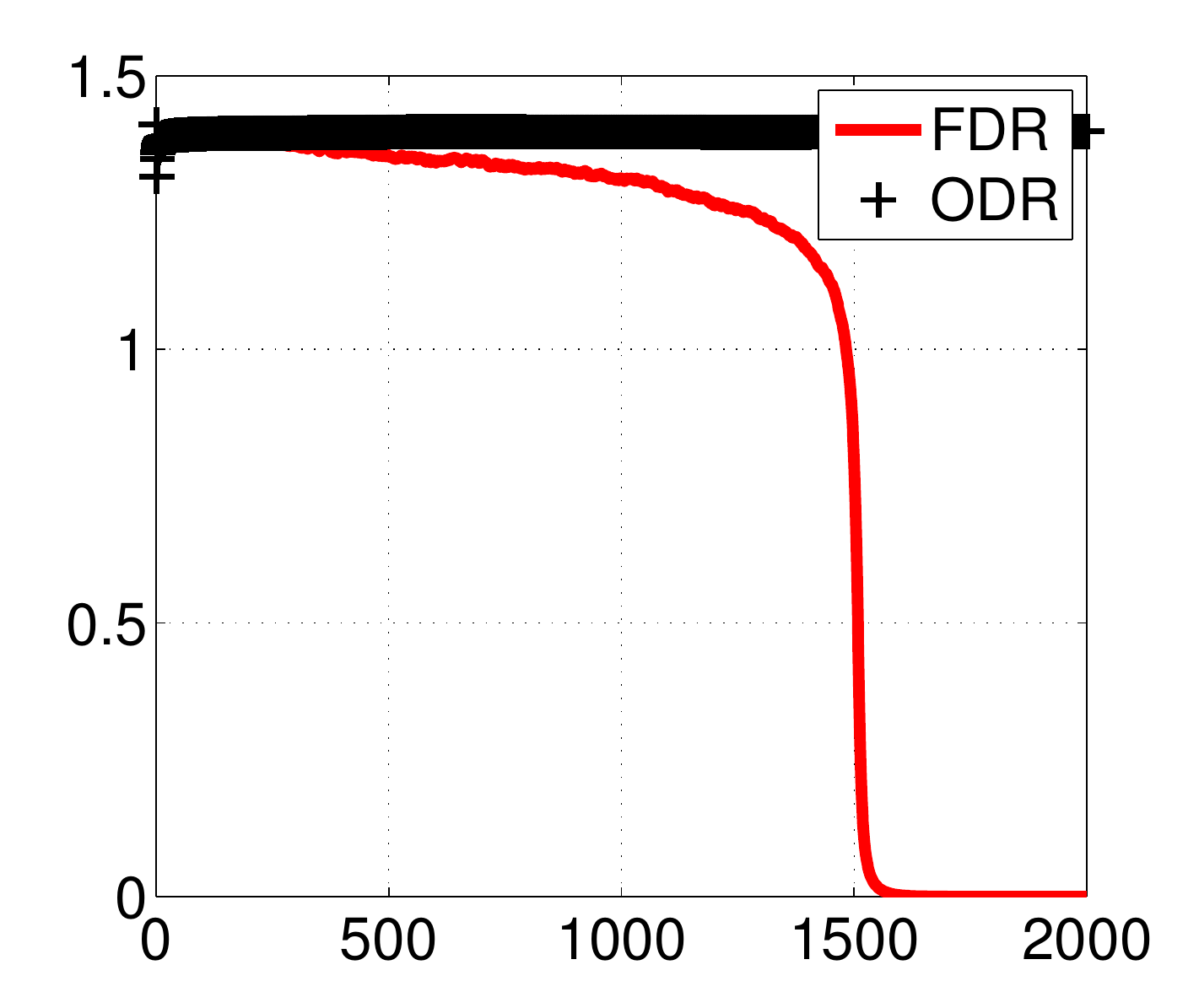}
}
\hspace{-.7cm}
\subfigure[TCB + RI]
{
\includegraphics[width=4.2cm]{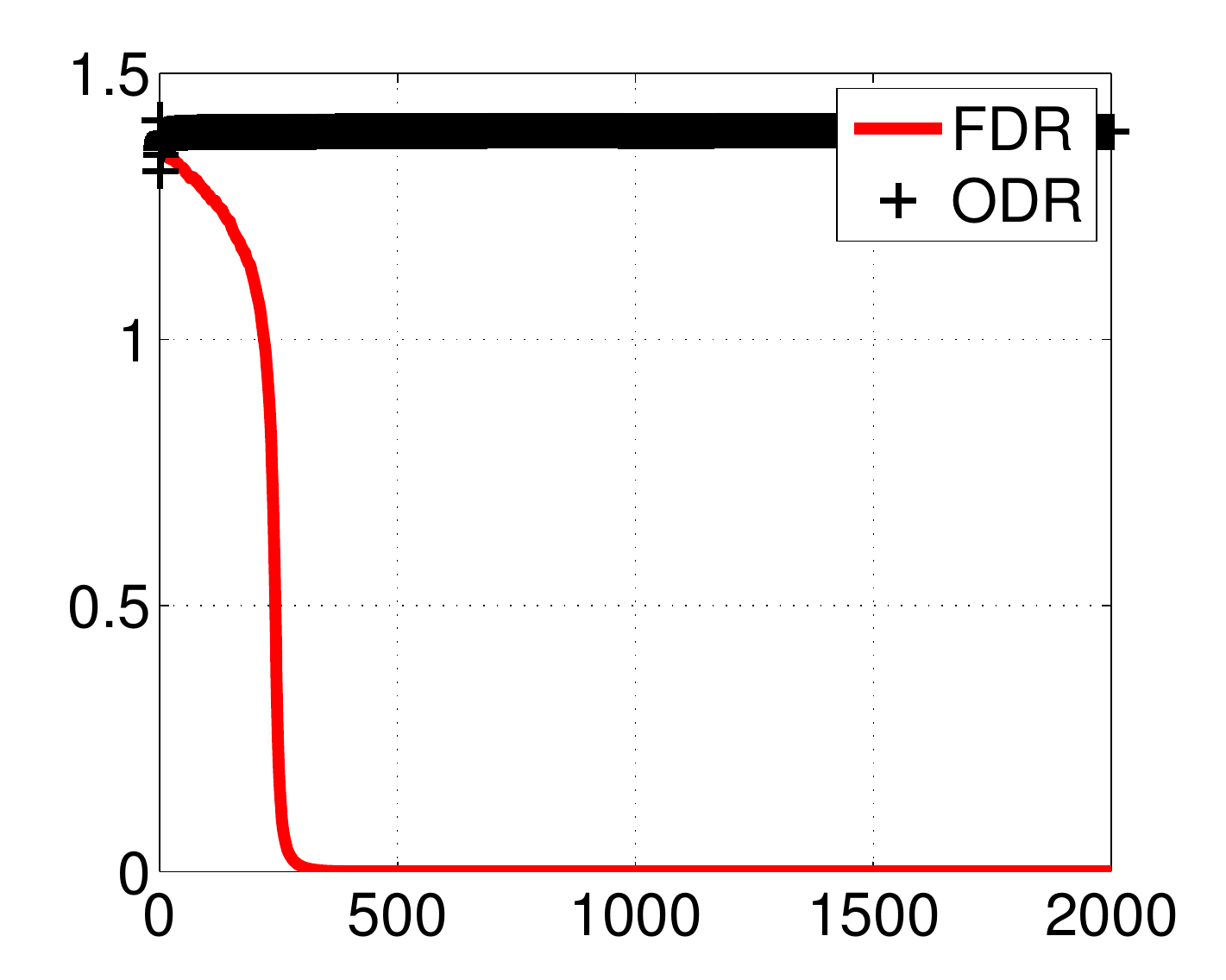}
}
\hspace{-.7cm}
\subfigure[TCB + CI]
{
\includegraphics[width=4.2cm]{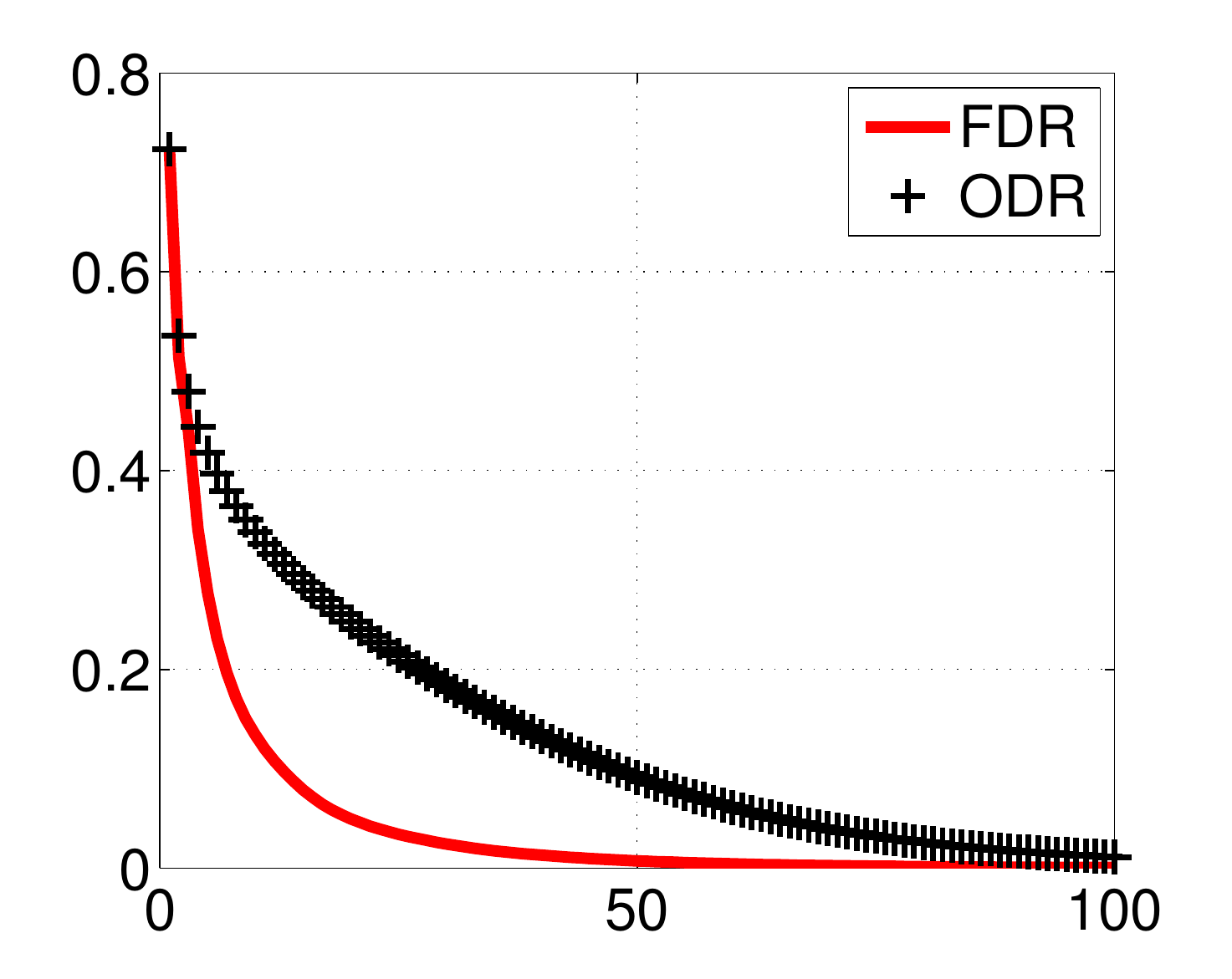}
}
}
%\caption{Relative error  versus iteration in the three-mask case.}
%\label{fig:three}
%\end{center}
%\end{figure}
\subfigure[RPP + RI]
{
\includegraphics[width=4.2cm]{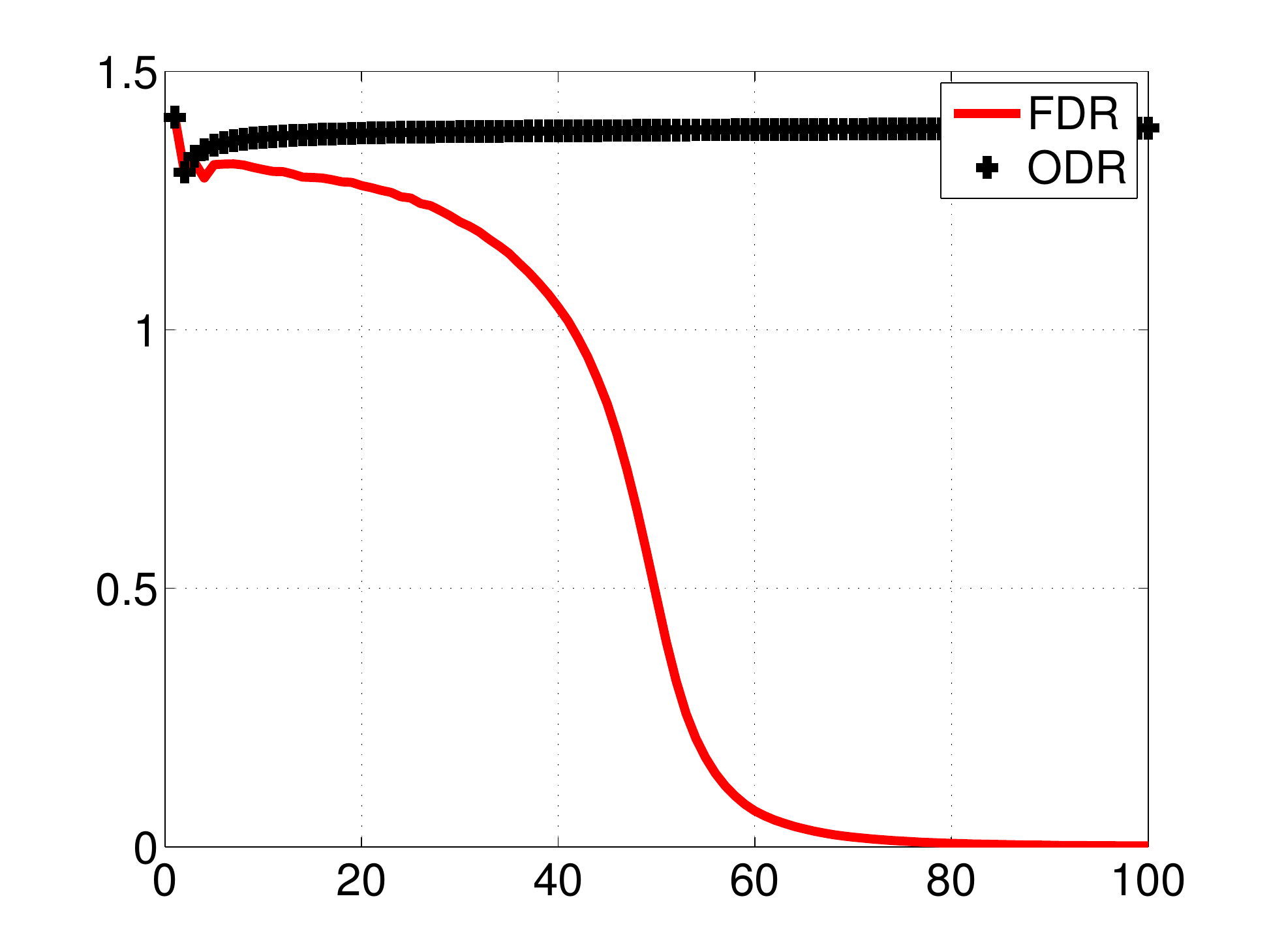}
}\hspace{-0.55cm}
\subfigure[RPP + CI]
{
\includegraphics[width=4.2cm]{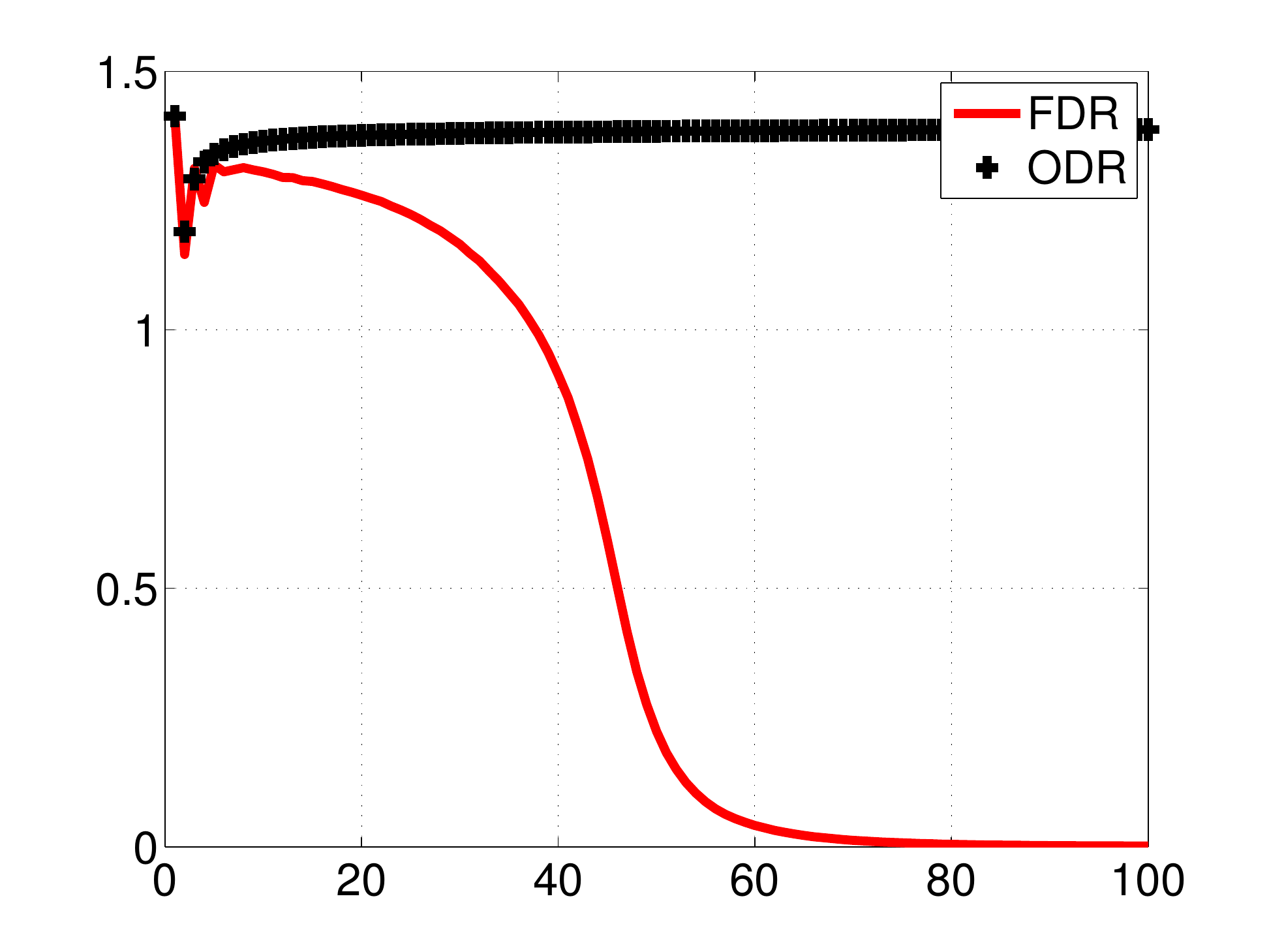}
}\hspace{-0.5cm}\subfigure[TCB + RI]
{
\includegraphics[width=4.2cm]{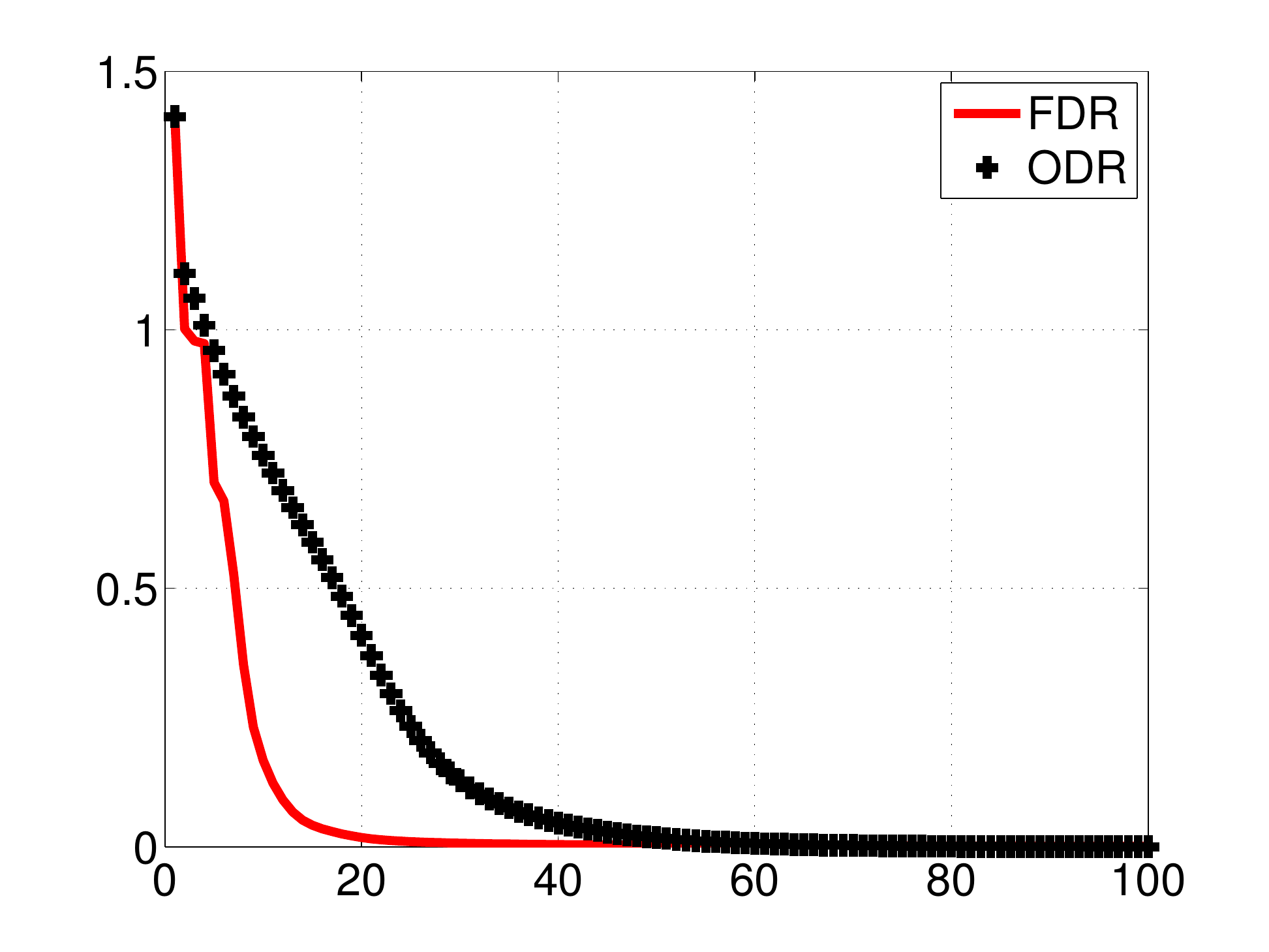}
}\hspace{-0.55cm}\subfigure[TCB + CI]
{
\includegraphics[width=4.2cm]{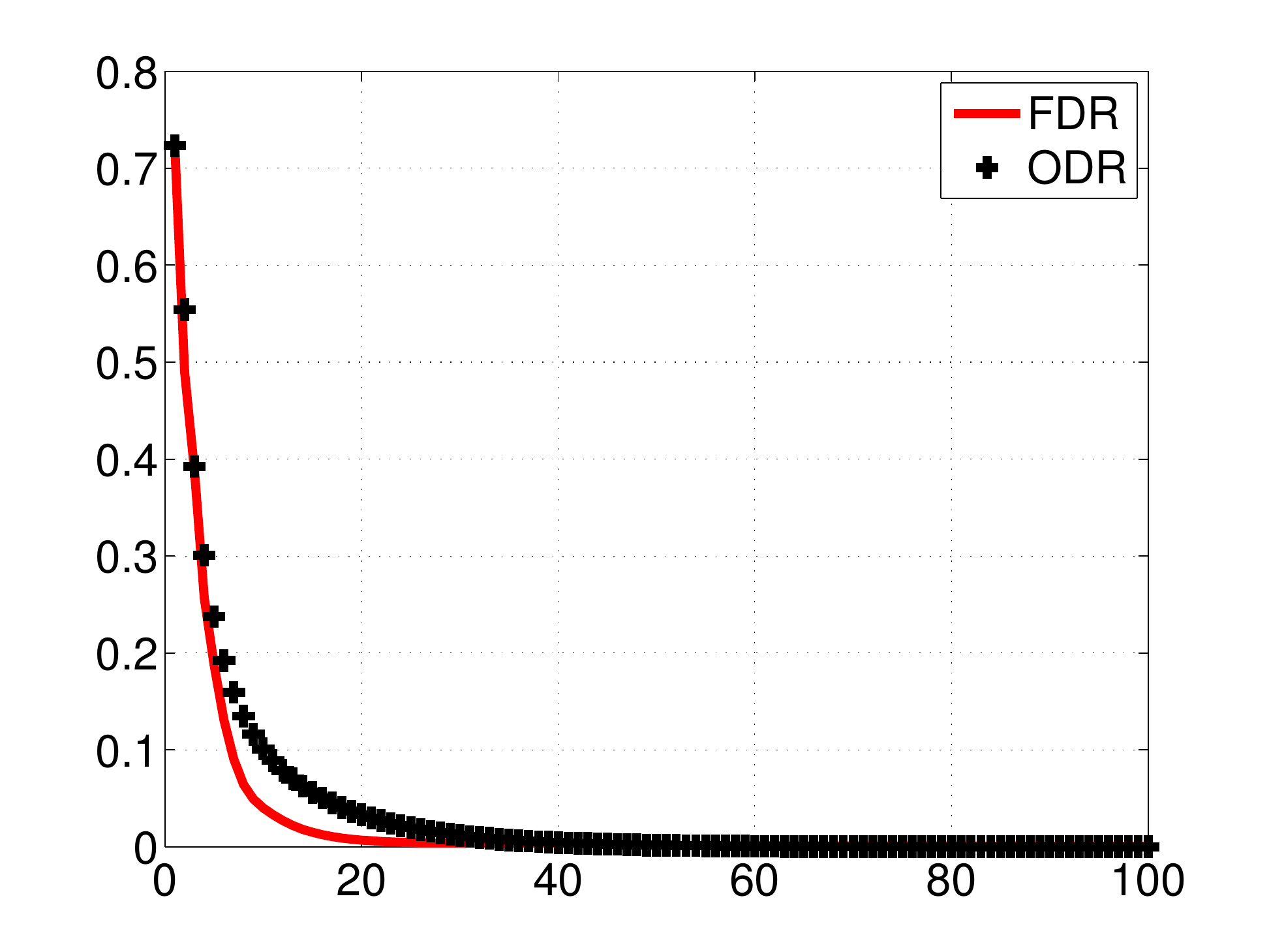}
}
%\begin{figure}
%\begin{center}
\commentout{
\subfigure[RPP + RI]
{
\includegraphics[width=4.2cm]{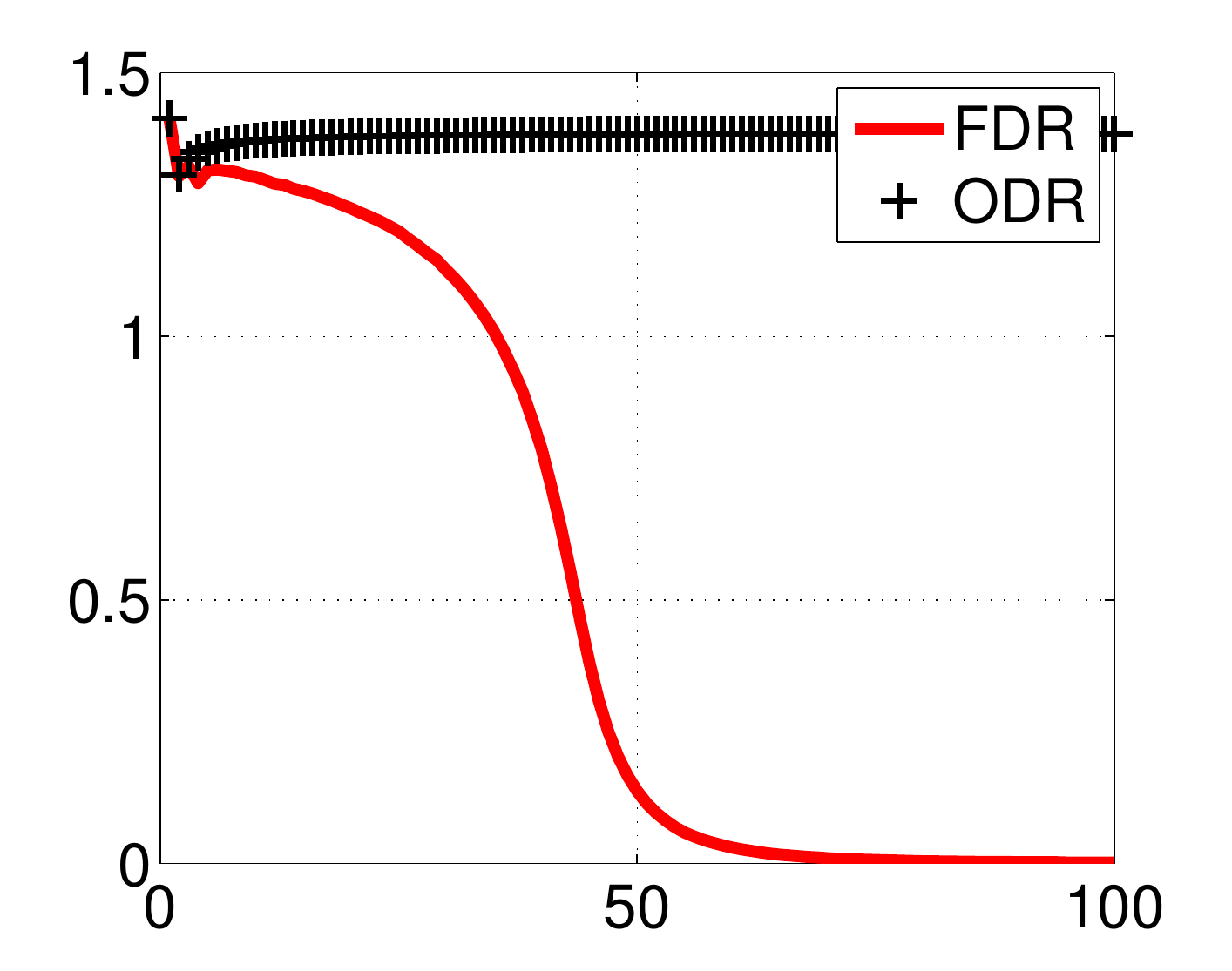}
}\hspace{-0.6cm}
\subfigure[RPP + CI]
{
\includegraphics[width=4.2cm]{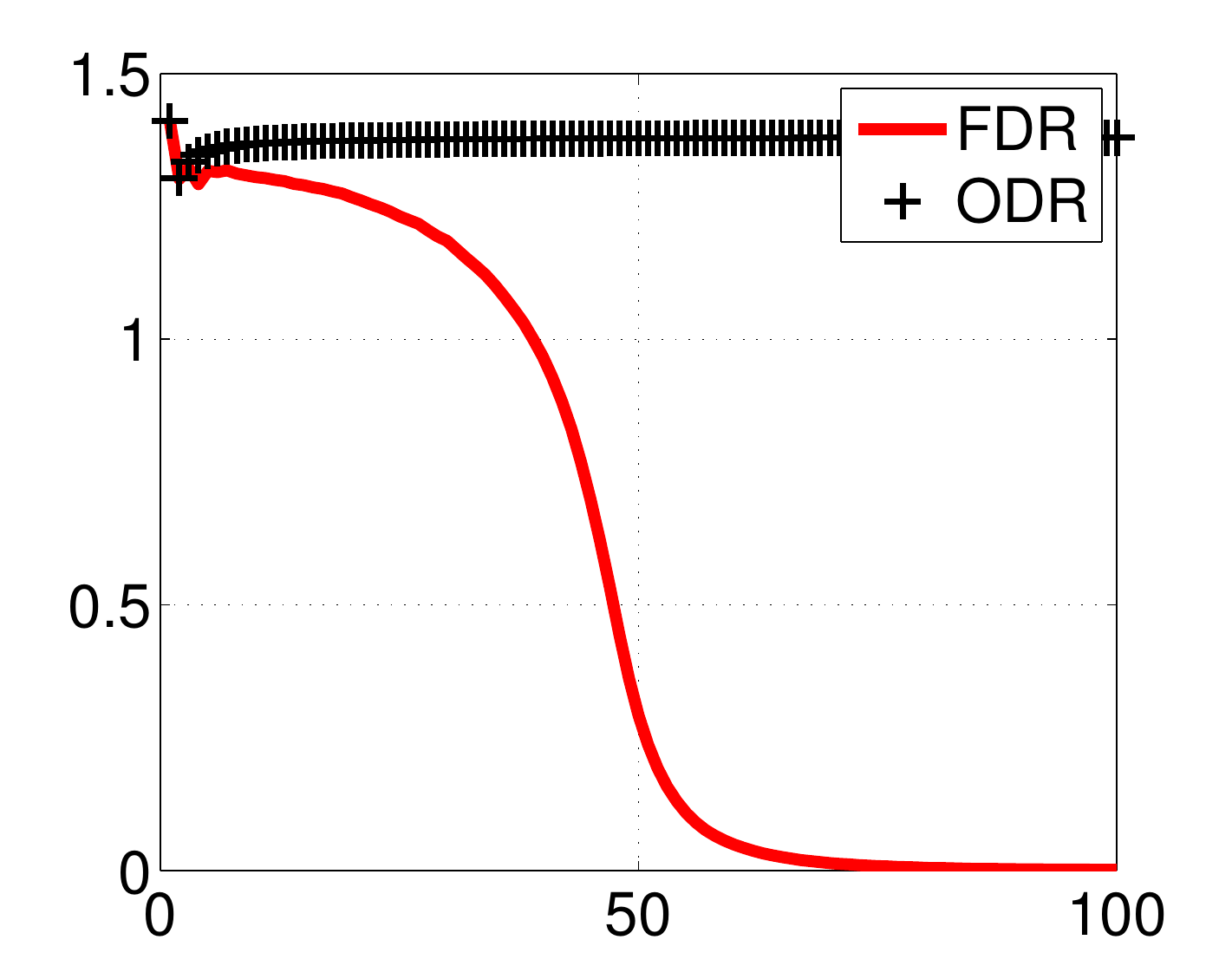}
}\hspace{-0.6cm}
\subfigure[TCB + RI]
{
\includegraphics[width=4.2cm]{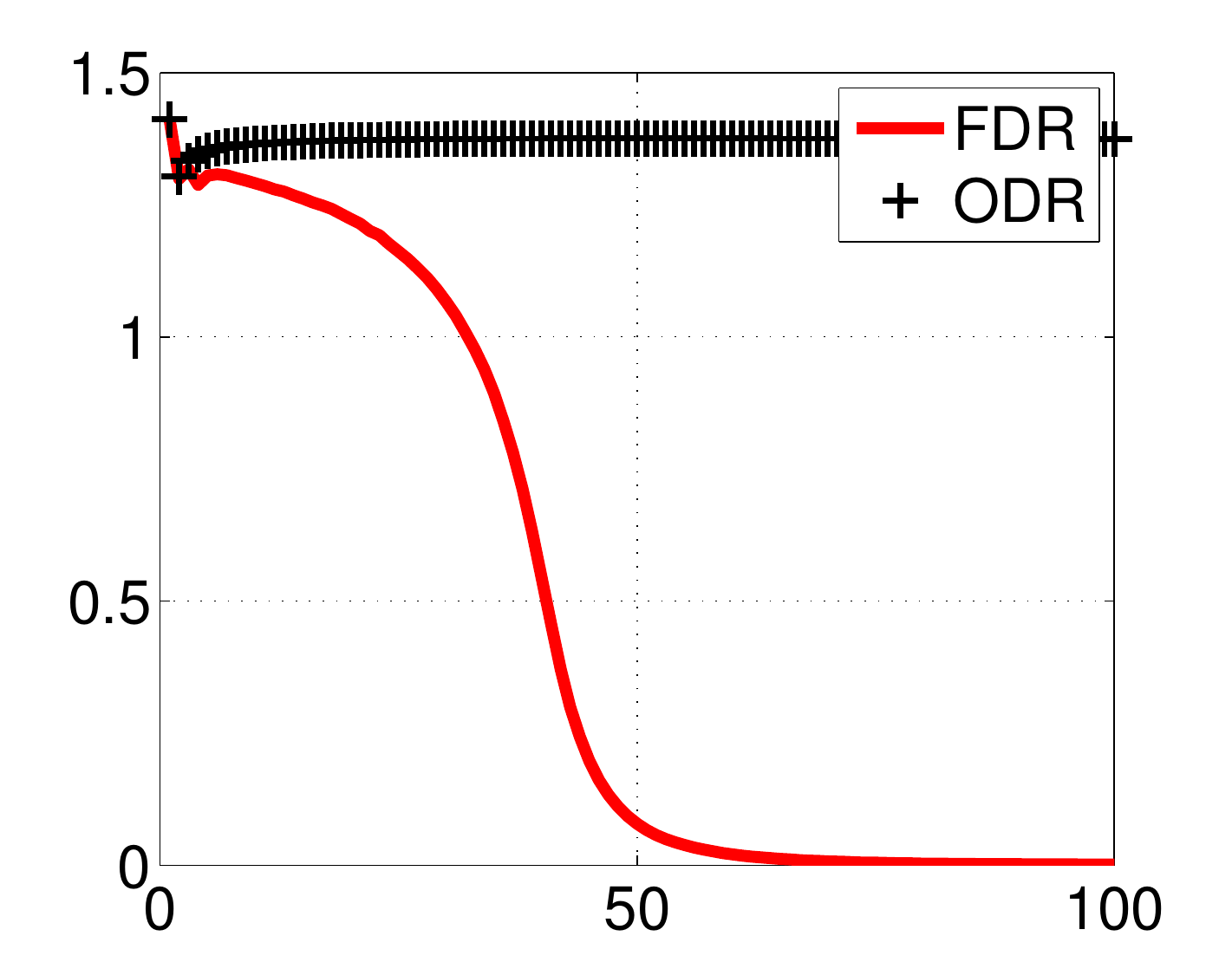}
}\hspace{-0.6cm}
\subfigure[TCB + CI]
{
\includegraphics[width=4.2cm]{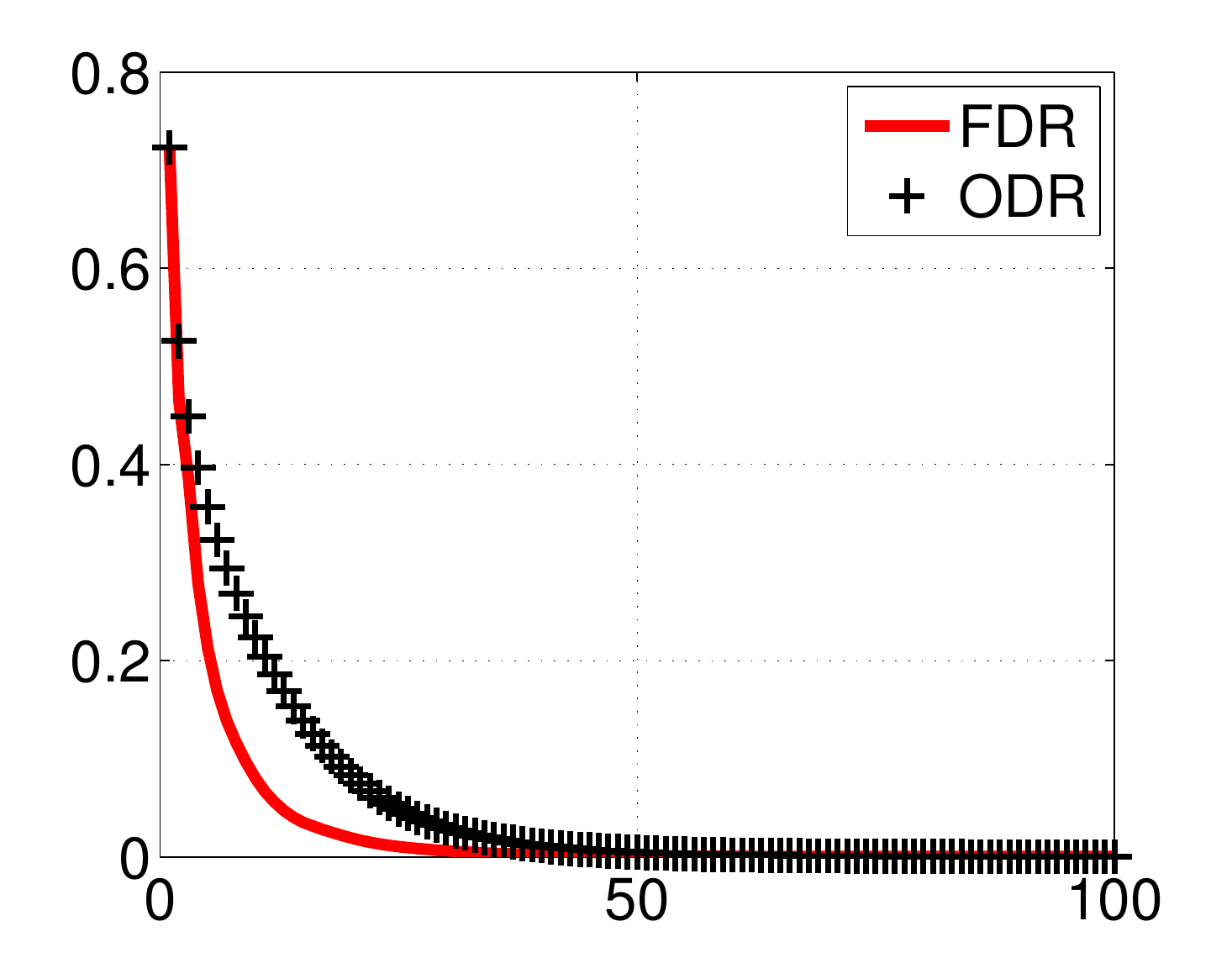}
}
}
\caption{Relative error  versus iteration with  3 patterns (a)-(d) and 
4 patterns  (e)-(h) (without oversampling in each pattern).}
\label{fig:three}
\label{fig:four}
\end{center}
\end{figure}

To test how DR performs in the setting of multiple patterns without
oversampling \cite{phaselift1, CLS1} we simulate the 3-pattern and
4-pattern cases with the propagation matrices \\

\beq \label{34'}
A^*=c \lt[\begin{matrix}
\Phi\,\, \diag\{\mu_1\}\\
...\\
\Phi\,\, \diag\{\mu_{\ell-1}\}\\
\Phi\,\, 
\end{matrix}\rt],\quad \ell=3,4,
\eeq
where $\Phi$ is
the standard (unoversampled)  discrete Fourier transform.  

Figure~\ref{fig:three} shows the result with  three patterns (a)-(d)
and four patterns (e)-(h), both 
without oversampling, i.e. $N=3|\cM|$ and
$N=4|\cM|$, respectively. Note that  the
number of data with four patterns is half of that with 2 oversampled patterns and yet the performance of the former
 is almost as good as that of the latter. 
% On the other hand, in practice, more masks also means more noises in data. 

Going from three patterns (Fig. \ref{fig:three} (a)-(b)) to four patterns 
(Fig. \ref{fig:three} (e)-(f))  reduces the number of iterations by almost 
an order of magnitude when RPP is the unknown image. The case with TCB has less room for improvement.

\commentout{
 \begin{figure}[htbp]
\begin{center}
\includegraphics[width=0.4\textwidth]{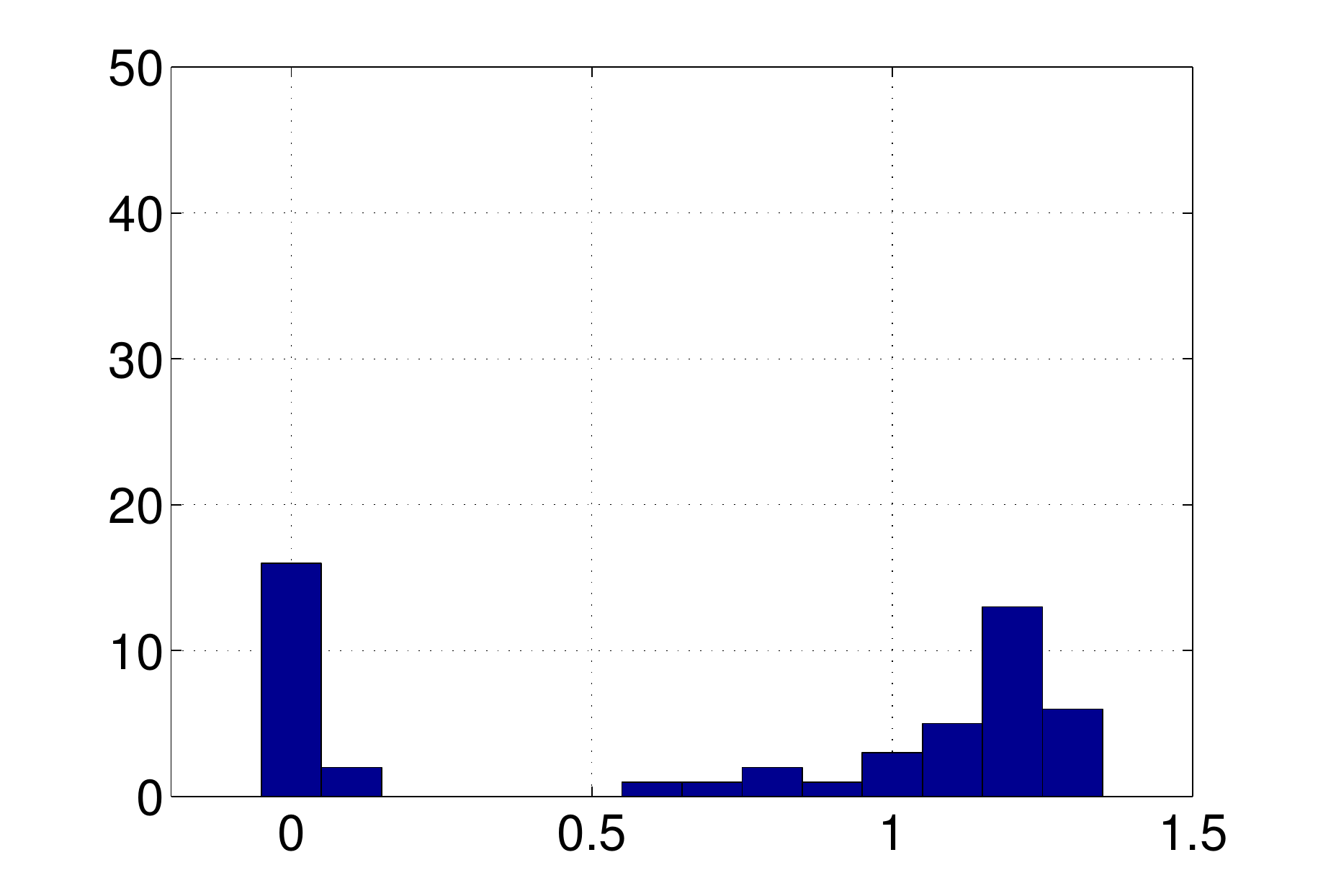}
\caption{
 Histogram of the relative error of ODR reconstruction of RPP with $\tilde n/n=6$ and
 50 independent random initializations. 
}
\label{HIOn=6}
\end{center}\label{HIOphase1}
\end{figure}
}

\subsection{Padding ratio}
 \begin{figure}[tbp]
\begin{center}
\includegraphics[width=6cm]{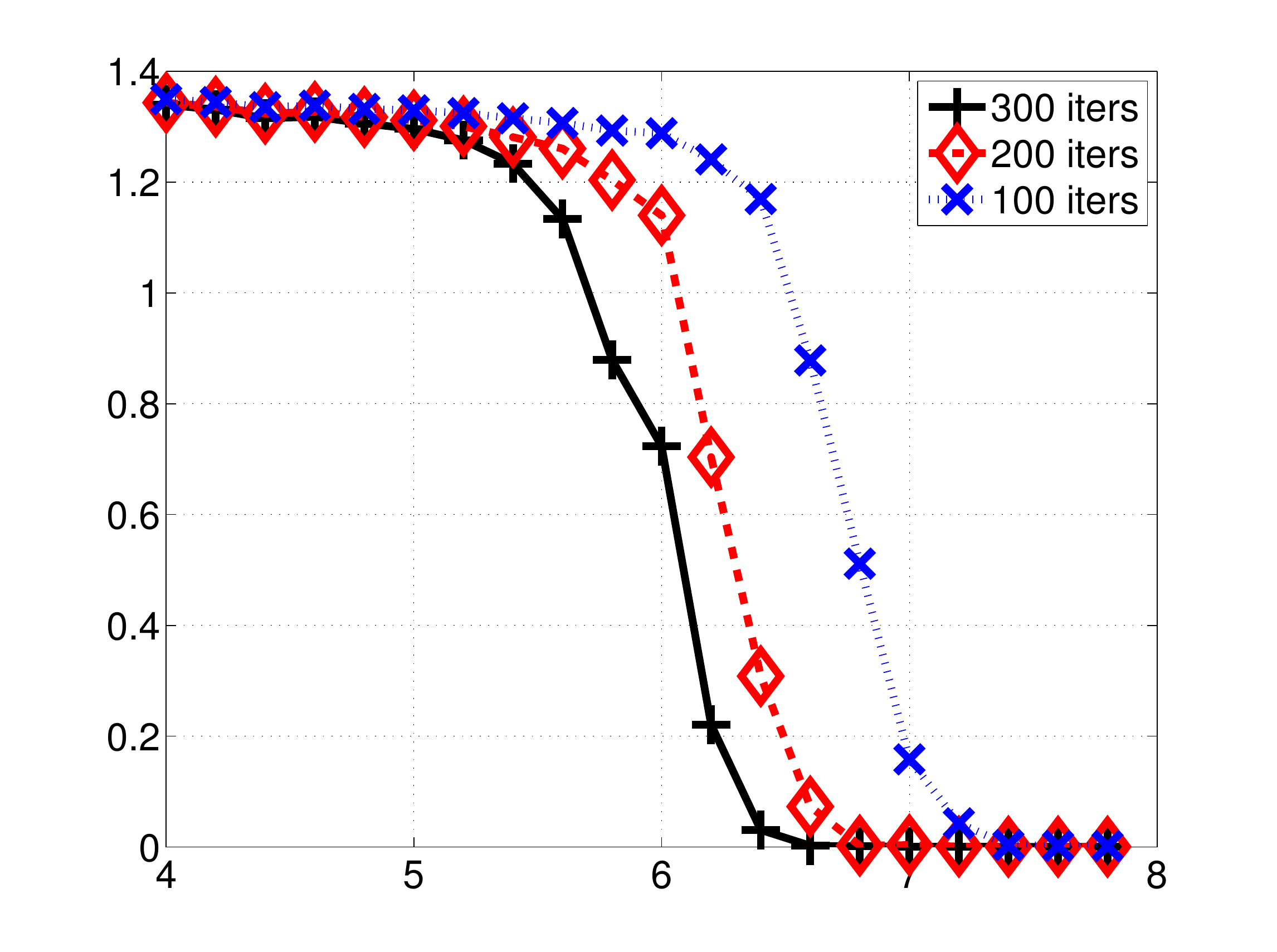}\quad
\includegraphics[width=5.5cm]{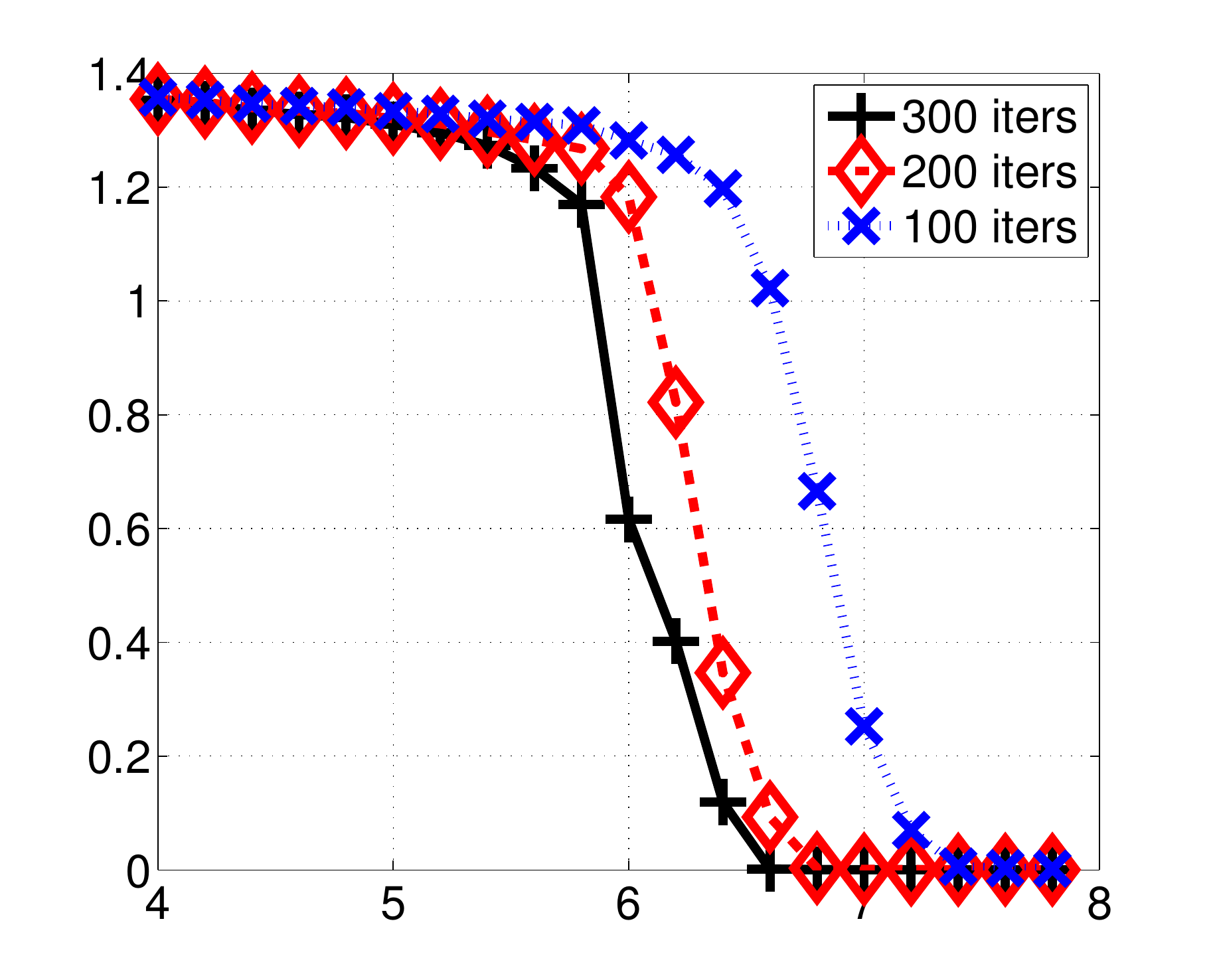}
\caption{
Relative error for RPP (left) and TCB (right) with various ratios $\tilde n/n$.
}
\label{HIOphase}
\label{fig:trans}
\end{center}
\end{figure}

Finally we test the effect of the padding ratio $\tilde n/n$ on the performance of ODR. 
For each $\tilde n/n \in [4,8]$, we conduct $50$ trials with independent, random initializations and average the relative errors. Recall that $\tilde n/n=4$ is the standard
padding  rate and at $\tilde n/n=8$ ODR is equivalent to FDR. 

Fig. \ref{fig:trans} shows the averaged  relative error versus the ratio $\tilde n/n$, demonstrating  an effect of phase transition which depends on the number of iterations. 
As the number of iterations increases, the threshold ratio decreases. The phase transition
accounts for  the sharp transition from stagnation at the standard  ratio $\tilde n/n=4$
to the rapid convergence at $\tilde n/n=8$ (i.e. FDR) seen in Fig. \ref{fig:two} (a)-(b)
and  Fig. \ref{fig:three} (a)-(b), (e)-(f) . 

%\section{Conclusion}

\appendix
\section{Proof of Proposition \ref{thm3}}
In order to prove the uniqueness theorem for Fourier magnitude retrieval, we need to
take up the more elaborate notation in Section \ref{sec:coded}. 
%We will also prove the theorem for the general spatial dimension $d\geq 2$. 

\commentout{
 Let $\bn = (n_1,\ldots,n_d) \in \IZ^d$ and $\bz = (z_1,\ldots,z_d) \in \IC^d$. Define the multi-index notation $\bz^\bn = z_1^{n_1}z_2^{n_2},..., z_d^{n_d}$. Let 
  \begin{equation*}
 \cM = \{ \mathbf{0} \le \bn \le \bM\}, \quad \bM = (M_1,M_2,\ldots,M_d). 
 \end{equation*}
Here  $\mathbf{m} \le \bn$ if $m_j \le n_j, \forall j$. Denote $|\cM|  =\displaystyle \prod_{j=1}^d (M_j+1)$. 
}

Let 
$$F(\bz) = \sum_{\bn} f(\bn) \bz^{-\bn}$$
be the $z$-transform of $f$.
According to the fundamental theorem of algebra, $F(\bz)$ can   be written uniquely as
\beq
\label{temeq2}
F(\bz) = \alpha \bz^{-\bn_0} \prod_{k=1}^p F_k(\bz),
\eeq
where $\bn_0$ is a vector of nonnegative integers, $\alpha$ is a complex coefficient, and $F_k(\bz)$ are nontrivial irreducible  polynomials in $\bz^{-1}$.

\commentout{
From the calculation 
 \beq
  |F(e^{i2\pi\bw})|^2&=& \sum_{\bn =-\bM}^{\bM}\sum_{\mbm+\bn\in \cM} f(\mbm+\bn)\overline{f(\mbm)}
   e^{-\im 2\pi \bn\cdot \bw}\nn
   \eeq
   we see that the Fourier magnitude measurement
   is equivalent to the standard discrete Fourier measurement of
   the correlation function 
       \beqn
	  R_f(\bn)=\sum_{\mbm\in \cM} f(\mbm+\bn)\overline{f(\mbm)}
	  \label{autof}
	  \eeqn
if sampled at the lattice 
 \begin{equation}
\cL = \Big\{\bw=(w_1,...,w_d)\ | \ w_j = 0,\frac{1}{2 M_j + 1},\frac{2}{2M_j + 1},...,\frac{2M_j}{2M_j + 1}\Big\}
\end{equation}
which is $2^d$ times of the grid of the original image. 
Note that
$R_f$ is defined on the extended grid
 \begin{equation*}
 \widetilde \cM = \{ -\bM \le \bn \le \bM\}, \quad \bM = (M_1,M_2,\ldots,M_d). 
 \end{equation*}
}

 Define the shift 
 \[
 f_{\mathbf{m}+}(\cdot) = f(\mathbf{m}  + \cdot), \quad  f_{\mathbf{m}-}(\cdot) = f(\mathbf{m} - \cdot). 
 \]

\begin{definition}[Conjugate Symmetry]
A polynomial $X(\bz)$ in $\bz^{-1}$ is said to be conjugate symmetric if, for some vector $\bk$ of positive integers and some $\theta \in [0,2\pi)$, 
\begin{equation*}
X(\bz) = e^{i \theta} \bz^{-\bk} \overline{X(\bar{\bz}^{-1})}. 
\end{equation*}
In other words, the ratio between $X(\bz)$ and its conjugate inversion
is a monomial in $\bz^{-1}$ times a complex number of unit modulus. 
\end{definition}

A conjugate symmetric polynomial may be reducible, irreducible, trivial, or nontrivial.  Any  monomial $\bz^{-\bk}$ is conjugate symmetric.
%If $A(\bz)$ is an arbitrary polynomial in $\bz^{-1}$, then  $$X(\bz) = A(\bz) \cdot \bz^{-\bM} \overline{A(\bar{\bz}^{-1})}$$is conjugate symmetric.

\begin{prop}
Let $f$ be a finite array whose $z$-transform has no conjugate symmetric factors. If the $z$-transform $G$ of another array $g $ satisfies $\measuredangle{F(e^{2\pi i \bom})} - \measuredangle{G (e^{2\pi i \bom})}\in \{0, \pi\}$,  $
\forall\bom\in \cL$ 
%\[
%\cL'=\Big\{\bom=(\omega_1,...,\omega_d)\ | \ \omega_j = 0,\frac{1}{M_j + 1},\frac{2}{M_j + 1},...,\frac{M_j}{M_j + 1}\Big\},
%\]
 then $g = c f$ for some constant $c\in\IR$.
 
 %If $\alpha(\bom)=0, \forall \bom\in \cL$, then  $c$ is a positive constant.

\label{thm:MR}
\end{prop}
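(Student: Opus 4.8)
The plan is to convert the pointwise phase constraint into a single polynomial identity in the Laurent ring and then read off $g=cf$ from unique factorization; note that $f\equiv0$ would have conjugate symmetric factors, so $f\not\equiv0$, and $g\equiv0$ gives $c=0$, so assume $g\not\equiv0$ too. \textbf{Step 1 (phases $\Rightarrow$ a bilinear identity on $\cL$).} Introduce the conjugate-reversed transform $\widetilde F(\bz):=\overline{F(\bar{\bz}^{-1})}=\sum_{\bn}\overline{f(\bn)}\,\bz^{\bn}$, so that $\overline{F(e^{2\pi i\bom})}=\widetilde F(e^{2\pi i\bom})$ on the torus, and likewise $\widetilde G$. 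The hypothesis $\measuredangle F(e^{2\pi i\bom})-\measuredangle G(e^{2\pi i\bom})\in\{0,\pi\}$ is precisely the statement that $F(e^{2\pi i\bom})\,\overline{G(e^{2\pi i\bom})}$ is real for each $\bom\in\cL$ (the adjustment of angles taking care of the frequencies where $G$ vanishes), i.e. $F\widetilde G-\widetilde F G$ vanishes at every $\bom\in\cL$. \textbf{Step 2 (oversampling $\Rightarrow$ the identity everywhere).} Since $f$ and $g$ are supported in $\cM=\{\mathbf 0\le\bn\le\bM\}$, the Laurent polynomial $F\widetilde G-\widetilde F G$ is supported on $\widetilde\cM=\{-\bM\le\bn\le\bM\}$, whose cardinality is $\prod_j(2M_j+1)=|\cL|$; as $\cL$ is exactly the inverse-DFT frequency grid for $\widetilde\cM$, a Laurent polynomial supported on $\widetilde\cM$ that vanishes on all of $\cL$ vanishes identically (this is the same oversampling principle that identifies the diffraction pattern with the autocorrelation). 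Hence $F\,\widetilde G=\widetilde F\,G$ in $\IC[\bz,\bz^{-1}]$.

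\textbf{Step 3 (unique factorization).} Work in the UFD $\IC[\bz,\bz^{-1}]$, whose units are the monomials $c\bz^{\bk}$, and note that $F\mapsto\widetilde F$ is a ring involution sending irreducibles to irreducibles. First, $\gcd(F,\widetilde F)$ is a unit: writing $F=\alpha\bz^{-\bn_0}\prod_kF_k$, an irreducible dividing both $F$ and $\widetilde F$ would be associate to $\widetilde F_j$ for some $j$, making either $F_j$ (if $j=k$) or a product $F_iF_j$ (if $i\ne j$) conjugate symmetric, which is excluded by hypothesis. Therefore $F\widetilde G=\widetilde F G$ forces $F\mid G$; write $G=sF$ with $s\in\IC[\bz,\bz^{-1}]$. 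Applying $\widetilde{(\cdot)}$ gives $\widetilde G=\widetilde s\,\widetilde F$, while substituting $G=sF$ into the identity and cancelling the nonzero $F$ gives $\widetilde G=s\,\widetilde F$; cancelling $\widetilde F$ yields $\widetilde s=s$. Thus $s$ is conjugate symmetric, and absorbing the unit/monomial ambiguity (using that $s$ must take a value of modulus $>0$ and argument in $\{0,\pi\}$ at some point of $\cL$) gives $s(\bz)=\overline{s(\bar{\bz}^{-1})}$: the coefficients of $s$ are Hermitian symmetric, its Newton polytope is centrally symmetric, and $s$ is real valued on the torus.

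\textbf{Step 4 (the support kills $s$).} Taking Newton polytopes in $G=sF$ yields $\mathrm{conv}(\supp g)=\mathrm{conv}(\supp s)+\mathrm{conv}(\supp f)$ (Minkowski sum). Since $\supp g\subseteq\cM$, while $\mathrm{conv}(\supp s)$ is centrally symmetric and $\supp f$ reaches the boundary of $\cM$ in every coordinate direction, the only admissible choice is $\mathrm{conv}(\supp s)=\{\mathbf 0\}$, i.e. $s$ is a constant; being real on the torus, $s=c\in\IR$. Hence $G=cF$, that is $g=cf$, as claimed.

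\textbf{Main obstacle.} Steps 1--2 are routine once the conjugate-reversal notation is fixed. The substance is in Steps 3--4: conjugate-reversal does not obviously respect irreducible factorizations, so one must use ``no conjugate symmetric factor of $F$'' carefully to kill \emph{both} a common factor of $F$ and $\widetilde F$ \emph{and} any nontrivial quotient $s$; and one must make the degree/support bookkeeping of Step 4 genuinely close. This last point is where the supports of $f$ and $g$ enter essentially (that $f$ occupies $\cM$ and that $g$ lives in $\cM$), and is the place the argument would be delicate for a slack bounding box $\cM$.
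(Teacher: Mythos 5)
Your Steps 1--3 are essentially the paper's own argument in a cleaner algebraic dress: the paper forms $H(\bz)=F(\bz)\overline{G(\bar\bz^{-1})}$, notes it is supported on $\widetilde\cM$ so its values on $\cL$ determine it, deduces $F\widetilde G=\widetilde F G$ identically, and then compares irreducible factorizations, using ``$F_k=\widetilde F_k$'' or ``$F_k=\widetilde F_{k'}$, $k'\neq k$'' to manufacture a forbidden conjugate symmetric factor and conclude $G=QF$ --- exactly your gcd argument in the Laurent ring. Where you genuinely diverge is the finish: the paper invokes the phase hypothesis a second time to get $Q(e^{2\pi i\bom})\in\IR$ on $\cL$ and combines this with $Q$ being one-sided (a polynomial in $\bz^{-1}$) to force $Q=c_0\in\IR$, whereas you extract $\widetilde s=s$ purely algebraically (your cancellation already gives exact equality, so the parenthetical about fixing a unimodular constant is superfluous) and then kill $s$ by a Newton polytope/Minkowski sum argument. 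Your ending is fine, but the justification as written is imprecise at the one place you flag: nothing in the hypotheses says $\supp\{f\}$ reaches the \emph{upper} faces $n_j=M_j$ of $\cM$, and no such thing is needed. What is needed, and is in fact guaranteed, is that $\supp\{f\}$ meets each \emph{lower} hyperplane $n_j=0$: otherwise $F$ has a monomial factor $z_j^{-1}$, and monomials are conjugate symmetric, contradicting the hypothesis (this is the paper's observation that $\bn_0=0$ in the factorization of $F$). With that, take any $\bv$ in the (centrally symmetric) support of $s$; adding $\bv$ and $-\bv$ to points of $\supp\{f\}$ with $j$-th coordinate $0$ and using that all exponents of $G=sF$ are nonnegative forces $v_j\geq 0$ and $v_j\leq 0$ for every $j$, hence $s$ is a constant, real since $\widetilde s=s$. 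So the ``slack bounding box'' worry evaporates --- only the lower corner matters, and it is supplied by the no-conjugate-symmetric-factor assumption --- and with that one-line repair your proof is complete; compared with the paper's ending it has the small advantage of not re-using the sampled phase data, at the cost of the Newton-polytope bookkeeping.
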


The real-valued version of the above proposition is given in \cite{Hayes}.
For the reader's convenience, we provide the proof for the complex setting.

\begin{proof}
Consider the array $h$ defined by 
$$h = f\star \  \overline{g(-\cdot)} $$
whose $z$-transform is 
$$H(\bz) = F(\bz)\overline{G(\bar{\bz}^{-1})}.$$
Note that  $h$ is defined on  $\widetilde \cM$, instead of $\cM$,  so $H(\bz)$ is completely determined by sampling $H$ on $\cL$.

Since 
$$\measuredangle{H(e^{2\pi i\bom})} = \measuredangle{F(e^{2\pi i\bom})} - \measuredangle{G(e^{2\pi i\bom})}$$
it follows $H(e^{2\pi i\bom})$  is {real-valued}.  By analytic continuation, we have
$$H(\bz) = \overline{H(\bar{\bz}^{-1})}$$
and
\begin{equation}
F(\bz)\overline{G(\bar{\bz}^{-1})} = \overline{F(\bar{\bz}^{-1})}G(\bz).
\label{temeq1}
\end{equation}
Multiplying both sides of \eqref{temeq1} by $\bz^{-\bM}$ results in the following polynomial equation in $\bz^{-1}$:
\begin{equation}
F(\bz)\overline{G(\bar{\bz}^{-1})}\bz^{-\bM} =\bz^{-\bM} \overline{F(\bar{\bz}^{-1})}G(\bz).
\label{temeq0}
\end{equation}

 We observe $\bn_0=0$ in  view of  (\ref{temeq2}) and the assumption that $F(\bz)$ has no conjugate symmetric factor.  
 We also have
 \begin{equation}
\bz^{-\bM} \overline{F(\bar{\bz}^{-1})} = \tilde \alpha \bz^{-\bn_1}\prod_k \tilde{F}_k(\bz),
\label{temeq3}
\end{equation}
where $\tilde{F}_k(\bz)$ are the nontrivial irreducible non-conjugate symmetric  polynomials in $\bz^{-1}$ of the form $\tilde{F}_k(\bz) = \bz^{-\bM+\mathbf{p}_k}\overline{F_k(\bar{\bz}^{-1})}$ for some vector $\mathbf{p}_k$ of positive integers. 

Writing 
\begin{equation}
G(\bz) = \beta\bz^{-\mbm_0} \prod_{\ell} G_{\ell}(\bz),
\label{temeq4}
\end{equation}
where $G_\ell(\bz)$ are nontrivial irreducible  polynomials in $\bz^{-1}$, we have
\begin{equation}
\bz^{-\bM} \overline{G(\bar{\bz}^{-1})} = \tilde\beta \bz^{-\mbm_1} \prod_\ell \tilde{G}_\ell(\bz),
\label{temeq5}
\end{equation}
where $\tilde{G}_\ell(\bz)$ are the nontrivial irreducible  polynomials in $\bz^{-1}$ of the form  $\tilde{G}_\ell(\bz) = \bz^{-\bM + \mathbf{q}_\ell}\overline{G_\ell(\bar{\bz}^{-1})}$ for some vector $\mathbf{q}_\ell$ of positive integers.

Plugging \eqref{temeq2},\eqref{temeq3}, \eqref{temeq4} and \eqref{temeq5} in \eqref{temeq0} yields
\begin{equation}
\alpha\tilde\beta\bz^{-\mbm_1} \prod_k F_k(\bz) \prod_\ell \tilde{G}_\ell (\bz) = \tilde\alpha\beta \bz^{-\bn_1-\mbm_0} \prod_k \tilde{F}_k(\bz) \prod_\ell G_\ell(\bz).
\end{equation}
Each nontrivial irreducible factor $F_k(\bz)$ must be equal to some $\tilde{F}_{k'}(\bz)$ or some $G_{\ell '}(\bz)$. However, if $F_k(\bz) = \tilde{F}_{k}(\bz)$, then $F_k(\bz)$ is a conjugate symmetric factor. If, on the other hand, $F_k(\bz) = \tilde{F}_{k'}(\bz)$ for some $k' \neq k$, then $F_k(\bz)F_{k'}(\bz) = \tilde{F}_{k'}(\bz)\tilde F_{k}(\bz)$ is a conjugate symmetric factor. Both cases, however, are excluded by the assumption  that the $z$-transform of $f$ does not  have conjugate symmetric factors. 

Hence each $F_k$ (rest. $\tilde F_k$) must be equal to some $G_{\ell}$ (rest. $\tilde G_\ell$) and we can write 
\begin{equation}
G(\bz) = Q(\bz) F(\bz)
\label{temeq6}
\end{equation}
where $Q(\bz)$ is a polynomial in $\bz^{-1}$, i.e.
\[
Q(\bz)=\sum_{\bn\geq 0} c_{\bn} \bz^{-\bn}.
\]

By the assumption that  $\measuredangle{F(e^{2\pi i \bom})} - \measuredangle{G(e^{2\pi i \bom})}\in \{0,\pi\} $ we have $Q(e^{2\pi i\bom})\in \IR,\forall \bom\in \cL,$ and hence $\bar c_{\bn}=c_{-\bn}=0$ except for $\bn=0$ in which case $ c_0\in \IR$. Therefore, 
$Q=c_0\in \IR$ and this is what we start out to prove.
\end{proof}

%\subsection{Proof of Proposition \ref{NonCS}}
\begin{prop} \cite{unique} 
Let $x_0$ have rank $\ge 2$. Let $\{\mu (\bn) \}$ be independent and continuous random variables on the unit circle of the complex plane. Then, the $z$-transform $F(\bz)$ of $f(\bn):=\mu (\bn)x_0(\bn)$ is irreducible up to a power of $\bz^{-1}$ with probability one.
\label{NonIR}
\end{prop}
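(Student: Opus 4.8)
The plan is to reduce the statement to a classical genericity fact for multivariate polynomials and then transfer it from ``generic coefficients'' to ``generic phases'' by a soft real-analyticity argument. Regard $F$ as a polynomial in the variables $w_j:=z_j^{-1}$, so that, with $\Lambda:=\{\bn\in\cM:x_0(\bn)\neq 0\}$ and $\mu(\bn)=e^{\im\phi(\bn)}$,
\[
F=P_\phi(\bw)=\sum_{\bn\in\Lambda}e^{\im\phi(\bn)}\,x_0(\bn)\,\bw^{\bn},\qquad \bw^{\bn}:=w_1^{n_1}\cdots w_d^{n_d}.
\]
A power of $\bz^{-1}$ is a monomial in $\bw$, so the claim is exactly that, almost surely over the independent continuous phases $\phi(\bn)$, the polynomial $P_\phi$ is irreducible in $\IC[\bw]$ after removal of a monomial factor. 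Note that $P_\phi$ always has support exactly $\Lambda$, hence Newton polytope $\Delta:=\mathrm{conv}(\Lambda)$; since $x_0$ has rank $\ge 2$ we have $\dim\Delta\ge 2$, and the phases indexed by $\bn\notin\Lambda$ are irrelevant.

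\textbf{Step 1 (a proper exceptional set in coefficient space).} Let $\IC^{\Lambda}$ be the space of coefficient vectors of polynomials with support in $\Lambda$, let $U_\Lambda\subset\IC^{\Lambda}$ be the Zariski-open subset on which every coordinate is nonzero (support exactly $\Lambda$, Newton polytope exactly $\Delta$), and let $R\subset U_\Lambda$ be the set of those reducible after removal of a monomial factor, i.e. admitting a factorization $P=GH$ with neither $G$ nor $H$ a monomial. By Ostrowski's theorem $N(G)+N(H)=N(P)=\Delta$, so $R$ is the finite union, over the finitely many lattice Minkowski decompositions $\Delta=\Delta_1+\Delta_2$, of the images of the bilinear multiplication maps on the spaces of polynomials with prescribed Newton polytopes; hence $R$ is constructible, and its Zariski closure $V:=\overline R\subseteq\IC^{\Lambda}$ is an algebraic set. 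The crucial point is that $V$ is \emph{proper}: by the classical genericity of absolute irreducibility for polynomials with a fixed Newton polytope of dimension $\ge 2$ (cf.\ \cite{Hayes} for the real case, and \cite{unique}), the generic element of $U_\Lambda$ is irreducible up to a monomial, so $R$ is not Zariski-dense and $V\subsetneq\IC^{\Lambda}$. This is precisely where the rank-$2$ hypothesis is indispensable: if $\Delta$ were a segment or a point, $P_\phi$ would be a monomial times a univariate polynomial of degree $\ge 2$, hence \emph{always} reducible, and the proposition would be false.

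\textbf{Step 2 (transfer to the phase torus).} The map $\Psi:\mathbb{T}^{\Lambda}\to\IC^{\Lambda}$, $\Psi(\phi)=\big(e^{\im\phi(\bn)}x_0(\bn)\big)_{\bn\in\Lambda}$, is real-analytic, $\mathbb{T}^{\Lambda}$ is connected, and its image is the polyradius torus $\mathbb{T}_{x_0}:=\prod_{\bn\in\Lambda}\{c:|c|=|x_0(\bn)|\}\subset U_\Lambda$. Hence $\Psi^{-1}(V)$ is a real-analytic subset of $\mathbb{T}^{\Lambda}$, so by the identity theorem it is either all of $\mathbb{T}^{\Lambda}$ or Lebesgue-null. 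It cannot be all of $\mathbb{T}^{\Lambda}$: that would force $\mathbb{T}_{x_0}\subseteq V$, so each holomorphic polynomial $Q=\sum_{\alpha}q_\alpha c^{\alpha}$ vanishing on $V$ would vanish on $\mathbb{T}_{x_0}$; evaluating at $c_\bn=|x_0(\bn)|e^{\im\theta_\bn}$ yields $\sum_{\alpha}q_\alpha\big(\prod_\bn|x_0(\bn)|^{\alpha_\bn}\big)e^{\im\alpha\cdot\theta}\equiv 0$ on $\mathbb{T}^{\Lambda}$, and matching Fourier coefficients (all $|x_0(\bn)|\neq 0$) forces every $q_\alpha=0$, i.e.\ $V=\IC^{\Lambda}$, contradicting Step 1. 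Therefore $\Psi^{-1}(V)$, and a fortiori $\Psi^{-1}(R)=\{\phi: P_\phi\ \text{reducible up to a monomial}\}$, is Lebesgue-null; since the joint law of the $\phi(\bn)$ is absolutely continuous, this is a probability-zero event, which is the assertion.

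\textbf{Expected main obstacle.} The only substantial ingredient is Step 1 — that the reducible-up-to-a-monomial locus among support-$\Lambda$ polynomials is a \emph{proper} algebraic subset — which rests on the classical but nontrivial fact that a generic multivariate polynomial with a fixed $\ge 2$-dimensional Newton polytope is absolutely irreducible; everything else is soft. The remaining point worth flagging is the passage from generic coefficients to generic phases, which is handled by the elementary observation that a holomorphic polynomial vanishing on a polyradius torus $\{|c_\bn|=r_\bn>0\}$ vanishes identically — this is exactly what renders the fixed-modulus constraint on the coefficients of $P_\phi$ harmless. (A fully detailed version of this argument is carried out in \cite{unique}.)
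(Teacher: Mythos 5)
The paper itself does not prove Proposition \ref{NonIR}: it defers entirely to Theorem 2 of \cite{unique}, so your argument has to stand on its own. Its architecture is sound, and Step 2 is correct and clean: the passage from a proper algebraic exceptional set in coefficient space to the fixed-modulus torus, via the observation that a polynomial in the coefficient variables vanishing on a polyradius torus $\{|c_{\bn}|=r_{\bn}>0\}$ vanishes identically, together with the real-analytic zero-set dichotomy, is a correct way to neutralize the constraint that only the phases are random (modulo the minor point that you quietly upgrade ``independent continuous phases'' to an absolutely continuous joint law).

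The genuine gap is Step 1. What you actually need is: for an \emph{arbitrary prescribed sparse support} $\Lambda$ with two-dimensional convex hull, the polynomials with support exactly $\Lambda$ that are reducible modulo monomials are not Zariski dense in $\IC^{\Lambda}$. You justify this by appeal to ``classical genericity of absolute irreducibility for polynomials with a fixed Newton polytope of dimension $\ge 2$,'' citing \cite{Hayes} and \cite{unique}. Neither citation covers it: the Hayes--McClellan-type genericity results concern polynomials with \emph{full} (rectangular) support, and genericity over the full coefficient space says nothing about the measure-zero coordinate subspace $\IC^{\Lambda}$ in which you work; and \cite{unique} is the very source of the proposition, so invoking it here is circular. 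Indeed, by Fubini over the tori of fixed moduli, the sparse-support genericity statement is essentially equivalent to the proposition itself, so it is precisely where all the content lies --- and precisely where the rank-$2$ hypothesis must do real work (your dimension count via Ostrowski and Minkowski decompositions shows the reducible locus is constructible, but not that it fails to be dense). The needed fact is true, but it requires an argument: for instance, Bertini's second theorem applied to the linear system spanned by the monomials $\{\bz^{-\bn}:\bn\in\Lambda\}$ after removing the monomial fixed component, where rank $\ge 2$ of the support guarantees that the associated monomial map has image of dimension $\ge 2$, hence the system is not composite with a pencil and its generic member is irreducible; or the direct argument on the random coefficients carried out in \cite{unique}. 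As written, the crucial step is asserted rather than proved, and flagging it as ``the only substantial ingredient'' does not discharge it.
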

For the proof of Proposition  \ref{NonIR} see Theorem 2 of \cite{unique}.

We next show  that  the $z$-transform of $\{\mu (\bn)x_0(\bn)\}$ is almost surely irreducible up to a power $\bz^{-1}$ and not conjugate symmetric.  

\begin{prop}
 Let $\{\mu (\bn)\}$ be independent and continuous random variables on the unit circle of the complex plane. Let $f(\bn):=\mu(\bn)x_0(\bn)$. Then  the $z$-transforms  of  both $f_{\mathbf{t}+} $ and $\overline{f_{\mathbf{t}-}}$ are 
almost surely not conjugate symmetric $\forall \ \mathbf{t}$.
\label{NonCS}
\end{prop}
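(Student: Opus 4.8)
First I would reduce everything to a single Laurent polynomial. Conjugate symmetry, read in the obvious way for Laurent polynomials, is unchanged under multiplication by a monomial $\bz^{-\mbm}$ (this merely replaces the vector $\bk$ of the definition by $\bk+2\mbm$). Let $F(\bz)=\sum_{\bn}\mu(\bn)x_0(\bn)\bz^{-\bn}$ be the $z$-transform of $f$. Then the $z$-transform of $f_{\mathbf{t}+}$ is $\bz^{\mathbf{t}}F(\bz)$, and the $z$-transform of $\overline{f_{\mathbf{t}-}}$ is $\bz^{-\mathbf{t}}F^{\sharp}(\bz)$ with $F^{\sharp}(\bz):=\overline{F(\bar\bz^{-1})}$. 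Since the operation $\sharp$ is multiplicative, is an involution modulo monomials, and carries units to units, $F$ is conjugate symmetric if and only if $F^{\sharp}$ is. Hence, for every $\mathbf{t}$, each of the $z$-transforms in the statement is conjugate symmetric precisely when $F$ is, and it is enough to prove that $F$ is almost surely not conjugate symmetric.

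Next I would put the obstruction in coefficient form. With $c_{\bn}:=\mu(\bn)x_0(\bn)$ and $\Lambda:=\supp(x_0)$, the identity $F(\bz)=e^{i\theta}\bz^{-\bk}\overline{F(\bar\bz^{-1})}$ is equivalent to $c_{\bn}=e^{i\theta}\overline{c_{\bk-\bn}}$ for all $\bn$. This requires $\Lambda$ to be invariant under the reflection $\bn\mapsto\bk-\bn$, so $\bk$ must belong to the finite set $\Lambda+\Lambda:=\{\bn+\bn':\bn,\bn'\in\Lambda\}$, and it requires $|x_0(\bn)|=|x_0(\bk-\bn)|$ throughout $\Lambda$ — a condition on $x_0$ alone. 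I would fix one admissible $\bk$, bound the probability of the event $B_{\bk}$ that some $\theta$ satisfies $\mu(\bn)x_0(\bn)=e^{i\theta}\overline{\mu(\bk-\bn)x_0(\bk-\bn)}$ for all $\bn\in\Lambda$, and then sum over the finitely many $\bk$.

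For the estimate, note that $\bn\mapsto\bk-\bn$ acts on $\Lambda$ as an involution with $p$ two-cycles and at most one fixed point $\bn_\ast=\bk/2$, so $|\Lambda|\in\{2p,2p+1\}$. Picking one representative $\bn$ per two-cycle, the defining relation of $B_{\bk}$ becomes $\mu(\bn)\mu(\bk-\bn)=e^{i\theta}\overline{x_0(\bk-\bn)}\,x_0(\bn)^{-1}$; once the mask is prescribed on one full two-cycle, $e^{i\theta}$ is determined, on each remaining two-cycle $\mu(\bk-\bn)$ is then determined by $\mu(\bn)$, and $\mu(\bn_\ast)$ (if present) is restricted to two values. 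Thus $B_{\bk}$ is contained in the image of a smooth map from a parameter set of real dimension $1+\lfloor|\Lambda|/2\rfloor$, which is strictly less than $|\Lambda|$ because $x_0$ has rank $\geq 2$ and hence $|\Lambda|\geq 3$. Since the $\mu(\bn)$ are independent, each with a density on the unit circle, the joint law is absolutely continuous on the torus $\{w\in\IC^{\Lambda}:|w(\bn)|=1\ \forall\bn\}$, so the lower-dimensional set $B_{\bk}$ is null and $\IP(B_{\bk})=0$. Summing over the finitely many admissible $\bk$ gives $\IP(F\ \text{conjugate symmetric})=0$, which by the first step is the assertion. (Proposition~\ref{NonIR} is not needed for this argument, but is combined with it to conclude that, almost surely, $F$ has no nontrivial conjugate symmetric factor, the hypothesis actually used in Proposition~\ref{thm:MR}.)

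The substance of the argument is entirely in the bookkeeping: making the ``up to a monomial'' equivalence precise, so that the unknown, loose support of $x_0$ is harmless and $\bk$ is genuinely confined to the finite set $\Lambda+\Lambda$, and checking that the parameter dimension $1+\lfloor|\Lambda|/2\rfloor$ is strictly below $|\Lambda|$ in the borderline case $|\Lambda|=3$ — which is exactly the point at which the hypothesis rank $\geq 2$ is invoked. Once $B_{\bk}$ has been exhibited as an explicitly parametrized lower-dimensional subset of the parameter torus, the measure-theoretic conclusion is immediate.
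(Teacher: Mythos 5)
Your proposal is correct and follows essentially the same route as the paper: both reduce conjugate symmetry to the coefficient identity $f(\bt+\bn)=e^{i\theta}\,\overline{f(\bt+\bk-\bn)}$ for one of finitely many admissible $\bk$ and then observe that this fails almost surely, with you making explicit what the paper leaves implicit (confining $\bk$ to $\Lambda+\Lambda$, eliminating the existential $\theta$ by conditioning on one orbit of the reflection $\bn\mapsto\bk-\bn$, and invoking rank $\geq 2$ so that $|\Lambda|\geq 3$, plus the monomial/involution bookkeeping that handles $f_{\bt+}$ and $\overline{f_{\bt-}}$ simultaneously). One cosmetic caveat: ``continuous'' here need only mean atomless rather than absolutely continuous, but your own description (each constrained $\mu(\bk-\bn)$ is a deterministic function of the free coordinates) already gives probability zero by conditioning and atomlessness, so no density assumption is needed.
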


\begin{proof}
The  $z$-transform 
\begin{equation}
{F}_{\bt+}(\bz) = \sum_{\bn} f(\bt+\bn)\bz^{-\bn}.
\label{temeq21}
\end{equation}
 is conjugate symmetric if 
\begin{equation}
{F}_{\bt+}(\bz)  = e^{i \theta} \bz^{-\bk} \overline{{F}_{\bt+}(\bar{\bz}^{-1})}
\label{temeq22}
\end{equation}
for some vector $\bk$ of positive integers and some $\theta \in [0,2\pi)$. Plugging \eqref{temeq21} in \eqref{temeq22} yields
$$\sum_{\bn} f(\bt+\bn)\bz^{-\bn} = e^{i\theta}\bz^{-\bk} \sum_{\bn'}\overline{f(\bt+\bn')}\bz^{\bn'},$$
which implies
\begin{equation}
f(\bt+\bn) = e^{i\theta} \overline{f(\bt+\bk-\bn)}, \ \forall \bn.
\label{temeq23}
\end{equation}
However, $x_0$ is deterministic, and $\{\mu (\bn)\}$ are independent and continuous random variables on $\mathbb{S}^1$, so \eqref{temeq23} fails with probability one for any $\bk$. There are finitely many choices of $\bk$, so the $z$-transform of $f_{\mathbf{t}+}$ is almost surely not conjugate symmetric.

Similarly, the $z$-transform of $\overline{f_{\mathbf{t}-}}$ is also almost surely {\em not} conjugate symmetric.
\end{proof}

{\bf Acknowledgements.} Research is supported in part by  US NSF grant DMS-1413373 and Simons Foundation grant 275037.

%\bibliographystyle{plain}	% (uses file "plain.bst")
%\bibliography{ShortNote1}		% expects file "ShortNote.bib"

\end{document}